\subjclass[2010]{Primary: 57M12;  Secondary: 37F10, 30F60}
\keywords{Thurston map, mating}
\newtheorem{thm}{Theorem}
\newtheorem{cor}[thm]{Corollary}
\newtheorem{lemma}[thm]{Lemma}
\newtheorem{remark}[thm]{Remark}
\newcommand{\sections}{\renewcommand{\thethm}{\thesection.\arabic{thm}}
           \setcounter{thm}{0}}
\newcommand{\nosubsections}{\renewcommand{\thethm}{\thesection.\arabic{thm}}
           \setcounter{thm}{0}}
\newcommand{\co}{\colon}
\newcommand{\bbC}{\mathbb{C}}
\newcommand{\bbH}{\mathbb{H}}
\newcommand{\bbQ}{\mathbb{Q}}
\newcommand{\bbR}{\mathbb{R}}
\newcommand{\bbZ}{\mathbb{Z}}
\newcommand{\cN}{\mathcal{N}}
\newcommand{\cT}{\mathcal{T}}
\newcommand{\za}{\alpha}
\newcommand{\zb}{\beta}
\newcommand{\zd}{\delta}
\newcommand{\zf}{\phi}
\newcommand{\zg}{\gamma}
\newcommand{\zh}{\eta}
\newcommand{\zi}{\iota}
\newcommand{\zj}{\psi}
\newcommand{\zl}{\lambda}
\newcommand{\zm}{\mu}
\newcommand{\zr}{\rho}
\newcommand{\zs}{\sigma}
\newcommand{\zt}{\tau}
\newcommand{\zv}{\varphi}
\newcommand{\zF}{\Phi}
\newcommand{\zG}{\Gamma}
\newcommand{\zL}{\Lambda}
\begin{document}
\title{NET map slope functions }
\author{Walter Parry}
\date{\today}
\address{Department of Mathematics and Statistics\\
Eastern Michigan University\\
Ypsilanti, MI  48197}
\email{walter.parry@emich.edu}

\begin{abstract} This paper studies NET map slope functions.  It
establishes Lipschitz-type conditions for them.  It relates
Lipschitz-type conditions to the half-space theorem.  It gives bounds
on the number of slope function fixed points.  It provides examples of
rational NET maps with many matings.
\end{abstract}

\maketitle

\sections

\section{Introduction}\label{sec:intro}\nosubsections

This paper is part of a series of papers \cite{cfpp}, \cite{fkklpps},
\cite{fpp1}, \cite{fpp2}, \cite{fpp3} on NET maps.  A nearly Euclidean
Thurston (NET) map is a Thurston map $f$ with exactly four
postcritical points such that the local degree of $f$ at each of its
critical points is 2.  These are the simplest Thurston maps with
nontrivial Teichm\"{u}ller spaces.  They are quite tractable and yet
exhibit a wide range of behavior.

Although the emphasis is on NET maps, much of this paper applies to
all Thurston maps with four postcritical points.  Let $f\co S^2\to
S^2$ be a Thurston map with postcritical set $P_f$ containing exactly
four points.

Homotopy classes of simple closed curves in $S^2\setminus P_f$
correspond to slopes, elements of $\overline{\bbQ}=\bbQ\cup \{\infty
\}$.  We enlarge the set $\overline{\bbQ}$ by adjoining one more
element $\odot $, called the nonslope.  Loosely speaking, the slope
function $\zm_f\co \overline{\bbQ}\to \overline{\bbQ}\cup \{\odot \}$
is defined (in Section~\ref{sec:slopefns}) by starting with a slope,
choosing a simple closed curve $\zg$ with slope $s$ and defining
$\zm_f(s)$ to be the slope of a simple closed curve in $f^{-1}(\zg)$.
The multiplier of $s$ is the sum of the reciprocals of the degrees of
the restrictions of $f$ to the connected components of $f^{-1}(\zg)$
which are neither null homotopic nor peripheral.  Much of the interest
in $\zm_f$ stems from the fact that W. Thurston's characterization of
rational maps implies in this situation that if the orbifold of $f$ is
hyperbolic, then $f$ is equivalent to a rational map if and only if
$\zm_f$ has no fixed point with multiplier at least 1.  Slopes of
mating equators are also fixed points of $\zm_f$.

There is an intersection pairing on simple closed curves and a
corresponding intersection pairing on slopes.  In
Section~\ref{sec:genllip} we establish a Lipschitz-type condition
relative to the latter intersection pairing for slope functions of
Thurston maps with exactly four postcritical points.  In
Section~\ref{sec:nmlip} we substantially improve this Lipschitz-type
condition in the case of NET maps.  This result is used in \cite{fpp3}
as well as later in this paper.

We view the Teichm\"{u}ller space associated to $(S^2,P_f)$ as the
upper half-plane $\bbH$.  Using $f$ to pull back complex structures on
$(S^2,P_f)$ induces a pullback map $\zs_f\co \bbH\to \bbH$.  Selinger
proved in \cite{S} that $\zs_f$ extends continuously to the augmented
Teichm\"{u}ller space $\bbH\cup \overline{\bbQ}$.  If $s$ is a slope
such that $\zm_f(s)$ is a slope, not $\odot $, then
$\zs_f(-1/s)=-1/\zm_f(s)$.  In this way a slope $s$ corresponds to a
cusp $t=-1/s$.

Let $s$ be a slope such that $\zm_f(s)\notin \{s,\odot \}$.  Then the
half-space theorem of \cite{cfpp} provides an explicit open interval
about $-1/s$ in $\partial \bbH$, called an excluded interval, which
does not contain the negative reciprocal of the slope of a Thurston
obstruction for $f$.  The proof is based on an argument involving
extremal lengths of families of simple closed curves.
Theorem~\ref{thm:halfsp} gives another proof of this based on a slope
function Lipschitz-type condition.  Theorem~\ref{thm:nmhalfsp}, the
NET map half-space theorem, improves on this for the special case in
which $f$ is a NET map.  Sometimes finitely many of these excluded
intervals cover $\partial \bbH$, allowing one to conclude that $f$ is
rational.  This occurs in Example 6.8 of \cite{cfpp} as well as for
Douady's rabbit \cite[Figure 4]{fkklpps}.  However, after the
half-space theorem was first proved, computations suggested that there
exist examples of rational NET maps for which every finite union of
excluded intervals which arise from the half-space theorem omits an
open interval of $\partial \bbH$.  Theorem~\ref{thm:omit2} allows for
easy construction of many such examples, the first of which we are
aware.  See Remark~\ref{remark:omit2}.  This possibility led to the
extended half-space theorem, a qualitative version of which is
discussed in \cite[Section 10]{fkklpps}.  The extended half-space
theorem provides an explicit open interval about an initial cusp
$t=-1/s$ such that $\zm_f(s)\in \{s,\odot \}$.  This open interval,
except for $t$, is a union of infinitely many excluded intervals which
arise from the half-space theorem.  In a forthcoming paper \cite{p} we
prove for Thurston maps with four postcritical points having no
obstructions with multipliers equal to 1 that finitely many excluded
intervals arising from both the half-space and extended half-space
theorems cover $\partial \bbH $.  Thus in this case the union of all
excluded intervals arising from the half-space theorem is a cofinite
subset of $\partial \bbH$ (whose complement consists of elements of
$\overline{\bbQ}$).  However, Theorem~\ref{thm:omit3} allows one to
easily construct NET maps having obstructions with multiplier 1 for
which the union of all excluded intervals arising from the half-space
theorem is not a cofinite subset of $\partial \bbH$.  See
Remark~\ref{remark:omit3}.  These are the first such examples of which
we are aware.

If $f$ has an obstruction with multiplier 1, then $f$ commutes with a
(nonzero power of a primitive) Dehn twist $\zt$ about the obstruction
up to homotopy.  So if $\zm_f$ fixes a slope $s$ other than the slope
of the obstruction, then $\zm_f$ fixes every slope in the orbit of $s$
under the action of the infinite cyclic group generated by $\zt$.  In
this way it is possible for $\zm_f$ to have infinitely many fixed
points.  This occurs for the NET map presented in
Figure~\ref{fig:omit3}.  (See Remark~\ref{remark:omit3} and the proof
of Theorem~\ref{thm:omit3}.)  On the other hand, if $f$ does not have
an obstruction with multiplier 1, then Theorem~\ref{thm:fixedpts}
gives a bound on the number of slopes which $\zm_f$ can fix.  In
effect, this bounds the number of slope function fixed points in terms
of the degree of $f$.

Finally, Section~\ref{sec:matings} deals with polynomial matings.  See
\cite{bekmprt} for a thorough survey of this topic.
Section~\ref{sec:matings} presents an infinite sequence of rational
NET maps $f_n$ with degree $n$ such that $f_n$ has at least
$\left\lceil(n-1)/2\right\rceil $ formal matings.
Remark~\ref{remark:matings} shows how to modify these maps to obtain
rational Thurston maps with many topological matings.

The paper \cite{fpp3} also deals somewhat with NET map slope
functions.  Although the present work deals primarily with NET maps,
\cite{fpp3} deals almost exclusively with NET maps.  Its goal, not
quite achieved, is to find an effective algorithm which determines
whether a given NET map is equivalent to a rational map.  So its
results are quantitative, whereas the results of the present paper are
more qualitative.

\subsection*{Acknowledgements} The author gratefully acknowledges
support from the American Institute for Mathematics.  Many thanks to
Jim Cannon, Bill Floyd, Russell Lodge and Kevin Pilgrim for many
helpful discussions and comments on earlier versions of this paper.

\section{Slope functions}\label{sec:slopefns}\nosubsections

We define slope functions in this section.

We begin with a Thurston map $f\co S^2\to S^2$ whose postcritical set
$P_f$ has exactly four points.  We wish to mark the pair $(S^2,P_f)$
in order to assign slopes to simple closed curves.  For this, let
$\zG$ be the group of isometries of $\bbR^2$ of the form $x\mapsto
2\zl\pm x$ for $\zl\in \bbZ^2$.  By a marking of $(S^2,P_f)$ we mean
an identification of $S^2$ with $\bbR^2/\zG$ so that $P_f$ is the
image of $\bbZ^2$ in $\bbR^2/\zG$.  This choice of marking is
arbitrary in this section.  Later, when dealing with a NET map given
by a presentation, the marking will be determined by the presentation.
Every homotopy class of simple closed curves in $S^2/P_f$ which are
neither peripheral nor null homotopic contains the image of a line in
$\bbR^2$.  The slope of this line lies in the set
$\overline{\bbQ}=\bbQ\cup \{\infty \}$ of extended rational numbers.
This determines a bijection between the set of these homotopy classes
and $\overline{\bbQ}$.  (See \cite[Proposition 1.5 and Proposition
2.6]{fm}, for example.)  In addition to the slopes in
$\overline{\bbQ}$, we introduce a quantity $\odot $ not in
$\overline{\bbQ}$, called the nonslope.  This symbol is intended to
suggest a loop homotopic to a point.

Now we define the slope function $\zm_f\co \overline{\bbQ}\to
\overline{\bbQ}\cup \{\odot \}$ of $f$.  Let $s\in \overline{\bbQ}$.
Let $\zg$ be a simple closed curve in $S^2\setminus P_f$ with slope
$s$.  If every connected component of $f^{-1}(\zg)$ is either
peripheral or null homotopic, then we set $\zm_f(s)=\odot $.  Suppose
that $\zd$ and $\zd'$ are connected components of $f^{-1}(\zg)$ which
are neither peripheral nor null homotopic.  Then $\zd$ and $\zd'$ are
homotopic in $S^2\setminus P_f$.  Hence they have the same slope $t$.
We set $\zm_f(s)=t$.  This defines the slope function $\zm_f$.

\section{Core arcs}\label{sec:corearcs}\nosubsections

Much can be learned about Thurston maps by studying their pullback
actions on simple closed curves.  More can be learned by also studying
their pullback actions on core arcs.  (See Pilgrim and Tan's
\cite[Theorem 3.2]{pt} for a result on arc pullbacks for general
Thurston maps.)  We introduce core arcs in this section.  Let $f\co
S^2\to S^2$ be a Thurston map whose postcritical set $P_f$ contains
exactly four points.

By a core arc for $(S^2,P_f)$ we mean a closed arc $\za$ in $S^2$
whose endpoints are distinct elements of $P_f$ and these endpoints are
the only points of $\za$ in $P_f$.  Let $\zg$ be a simple closed curve
in $S^2\setminus P_f$ which is neither peripheral nor null homotopic.
The complement of $\zg$ in $S^2$ consists of two open topological
disks, each containing two elements of $P_f$.  So each of these disks
contains a unique homotopy class of core arcs relative to $P_f$.  We
say that such a core arc is a core arc for $\zg$.  Conversely,
starting with a core arc $\za$, the boundary of a small regular
neighborhood of $\za$ is a simple closed curve $\zg$ in $S^2\setminus
P_f$ which is neither peripheral nor null homotopic.  So every core
arc $\za$ is the core arc of some simple closed curve, and there
exists a core arc disjoint from $\za$.  Moreover, if $\za$ is a core
arc for two simple closed curves $\zg$ and $\zg'$, then $\zg$ and
$\zg'$ are homotopic relative to $P_f$.  This allows us to define the
slope of $\za$ to be the slope of $\zg$.  For every slope $s$ there
exist exactly two homotopy classes relative to $P_f$ of core arcs with
slope $s$.

By a lift of a core arc $\za$ we mean a closed arc $\widetilde{\za}$ in
$S^2$ such that $f$ maps $\widetilde{\za}$ homeomorphically onto
$\za$.

Let $f$ be a NET map for the rest of this section.  Let $\za$ and
$\zb$ be disjoint core arcs for $(S^2,P_f)$.  The rest of this section
is devoted to an investigation of $f^{-1}(\za\cup \zb)$.  

For the present discussion, we view $f^{-1}(\za\cup \zb)$ as a graph
$G$.  The edges of $G$ are the $\deg(f)$ lifts of $\za$ together with
the $\deg(f)$ lifts of $\zb$.  The vertices of $G$ are the elements of
the set $f^{-1}(P_f)$.  Since the local degree of $f$ at each of its
critical points is 2, every vertex of $G$ has valence either 1 or 2.
So every connected component of $G$ is either an arc or a simple
closed curve.

Let $A=S^2\setminus (\za\cup \zb)$, an annulus.  Let $B$ be a
connected component of $f^{-1}(A)$.  Then $B$ is an annulus, and the
restriction of $f$ to $B$ is a covering map of some degree $d$.  The
boundary components of $B$ are connected components of $G$.  One of
these boundary components is a connected component of $f^{-1}(\za)$,
and the other is a connected component of $f^{-1}(\zb)$.  If $B$ has a
boundary component which is an arc, then this arc has $d$ edges in
$G$.  If $B$ has a boundary component which is a simple closed curve,
then this simple closed curve has $2d$ edges in $G$.

Suppose that both boundary components of $B$ are arcs.  Then the
closure of $B$ is $S^2$.  So $G$ has only two connected components and
both of them are arcs with $d=\deg(f)$ edges.

Suppose that $B$ has a boundary component which is a simple closed
curve $\zd$.  Then $\zd$ is also a boundary component of a connected
component $B'$ of $f^{-1}(A)$ other than $B$.  Just as $\zd$ has $2d$
edges and the restriction of $f$ to $B$ has degree $d$, the
restriction of $f$ to $B'$ must also have degree $d$.  The union
$B\cup \zd\cup B'$ is an annulus in $S^2$.  If both of its boundary
components are arcs, then the closure of this annulus is $S^2$.
Otherwise there exists a connected component $B''$ of $f^{-1}(A)$
other than $B$ and $B'$ with a boundary component which is a simple
closed curve equal to a boundary component of $B\cup \zd\cup B'$.

Continuing in this way, we obtain the following conclusions.  Exactly
two connected components of $f^{-1}(\za\cup \zb)$ are arcs, each
containing $d$ lifts of either $\za$ or $\zb$ for some positive
integer $d$.  The remaining connected components of $f^{-1}(\za\cup
\zb)$ are simple closed curves containing $2d$ lifts of either $\za$
or $\zb$.  Let $\zg$ be a simple closed curve in $S^2$ with core arcs
$\za$ and $\zb$, so that $\zg$ is a core curve for $A$.  The connected
components of $f^{-1}(\za)$ interlace the connected components of
$f^{-1}(\zb)$, and every connected component of $f^{-1}(\za)$ is
separated from every connected component of $f^{-1}(\zb)$ by a
connected component of $f^{-1}(\zg)$.  We note that this discussion
proves statement 2 of Lemma 1.3 of \cite{cfpp}, namely, that
$f^{-1}(P_f)$ contains exactly four points which are not critical
points.  This discussion also proves part of statement 1 of Theorem
4.1 of \cite{cfpp}, namely, that every connected component of
$f^{-1}(\zg)$ maps to $\zg$ with the same degree.

 Figure~\ref{fig:ovals} illustrates one possibility for
$f^{-1}(\za\cup \zb\cup \zg)$.  The three dashed simple closed curves
form $f^{-1}(\zg)$.  The solid black, respectively gray, curves form
$f^{-1}(\za)$, respectively $f^{-1}(\zb)$.  The dots are the elements
of $f^{-1}(P_f)$.

 \begin{figure}
\centerline{\includegraphics{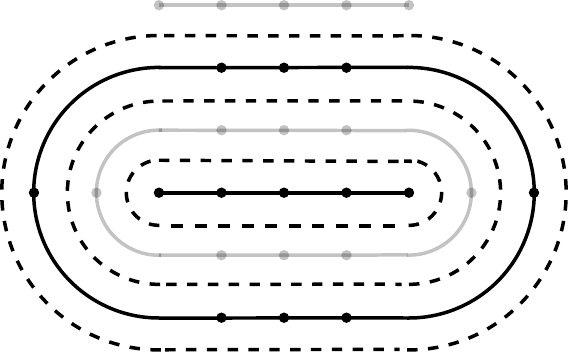}}
\caption{The inverse images of a simple closed curve and two
core arcs}
\label{fig:ovals}
 \end{figure}

\section{Intersection numbers}\label{sec:inttnnums}\nosubsections

In this section we fix notation and conventions concerning the
intersection pairings that will be used later.

We begin by defining intersection numbers of slopes.  Let
$s=\frac{p}{q}$ and $s'=\frac{p'}{q'}$ be two slopes (elements of
$\overline{\bbQ}=\bbQ\cup \{\infty \}$), where $p$, $q$ and $p'$, $q'$
are two pairs of relatively prime integers.  The intersection number
$\zi(s,s')$ is the absolute value of the determinant of the $2\times
2$ matrix $\left[\begin{smallmatrix}p & p' \\ q & q'
\end{smallmatrix}\right]$: $\zi(s,s')=\left|pq'-p'q\right|$.  Viewing
the elements of $\text{PGL}(2,\bbZ)$ as linear fractional
transformations, if $\zv\in \text{PGL}(2,\bbZ)$ and $s$, $s'$ are
slopes, then $\zi(\zv(s),\zv(s'))=\zi(s,s')$.  So $\zi$ is invariant
under the action of $\text{PGL}(2,\bbZ)$.

We return to the situation of a Thurston map $f\co S^2\to S^2$ whose
postcritical set $P_f$ has exactly four points.  Let $\za$ and $\zb$
be either core arcs or simple closed curves in $S^2\setminus P_f$
which are neither peripheral nor null homotopic.  We define the
intersection number $\zi(\za,\zb)$ so that $\zi(\za,\zb)=\zi(s,t)$,
where $s$ and $t$ are the slopes of $\za$ and $\zb$.  We use
bilinearity to extend this definition to the case in which either
$\za$ or $\zb$ is an arc or simple closed curve which is a
concatenation of core arcs.  So in an expression of the form
$\zi(\cdot ,\cdot )$, either both arguments are slopes or else each is
either an arc or simple closed curve.

If $\za$ and $\zb$ are both simple closed curves in $S^2\setminus
P_f$, then $\zi(\za,\zb)$ is the geometric intersection number of two
simple closed curves mapping to $\za$ and $\zb$ in a torus which
double covers $S^2$ with branching exactly over $P_f$.  This is half
the geometric intersection number of $\za$ and $\zb$.  In other words,
points in $\za\cap \zb$ are counted with multiplicity $\frac{1}{2}$.
If one of $\za$ or $\zb$ is a core arc and the other is a simple
closed curve in $S^2\setminus P_f$, then points in $\za\cap \zb$ are
counted with multiplicity 1.  If $\za$ and $\zb$ are distinct core
arcs, then endpoints in $\za\cap \zb$ are counted with multiplicity 1
and all other points are counted with multiplicity 2.

\section{General Lipschitz-type conditions}
\label{sec:genllip}\nosubsections

In this section we work with a general Thurston map with exactly four
postcritical points.  We prove Theorem~\ref{thm:lipschitz}, which
states that slope functions of such maps satisfy a Lipschitz-type
condition with respect to intersection pairing.  We will give two
proofs.  Each proof uses a lemma.

To prepare for the first lemma, let $\bbH$ be the open upper
half-plane in $\bbC$.  We use the discussion in Section 6 of
\cite{cfpp} up to the half-space theorem.  From there we have for a
slope $\frac{p}{q}$ in reduced form and a positive real number $m$
that
  \begin{equation*}
B_m(\tfrac{p}{q})=\{z\in\bbH:\frac{\text{Im}(z)}{\left|pz+q\right|^2}>m\}
  \end{equation*}
is a horoball in $\bbH$ tangent to $\partial \bbH$ at $-\frac{q}{p}$.
According to Section 6 of \cite{cfpp}, the Euclidean diameter of
$B_m(\frac{p}{q})$ is $\frac{1}{mp^2}$.

This leads to the following lemma.

\begin{lemma}\label{lemma:tangent}Suppose that
$s=\frac{p}{q}$ and $s'=\frac{p'}{q'}$ are distinct slopes, where $p$,
$q$ and $p'$, $q'$ are two pairs of relatively prime integers.  Suppose
that $m$, $m'$ are positive real numbers such that the horoballs
$B_m(s)$, $B_{m'}(s')$ are tangent to each other.  Then
  \begin{equation*}
\zi(s,s')=\frac{1}{\sqrt{mm'}}.
  \end{equation*}
\end{lemma}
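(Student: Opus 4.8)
The plan is to translate the statement into elementary Euclidean geometry using the explicit description of the horoballs $B_m(p/q)$ recalled just before the lemma. First I would record the two shapes these horoballs take. If $p\neq 0$, then $B_m(p/q)$ is the open Euclidean disk tangent to $\partial\bbH$ at $-q/p$ of Euclidean diameter $\frac{1}{mp^2}$; equivalently, it has center $-\frac{q}{p}+\frac{i}{2mp^2}$ and radius $\frac{1}{2mp^2}$. If $p=0$, then $s=\frac{p}{q}=0$, and after replacing $q$ by $-q$ if necessary we may take $q=1$; in this case $B_m(0)=\{z\in\bbH:\text{Im}(z)>m\}$, a half-plane tangent to $\partial\bbH$ at $\infty$. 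Since $s\neq s'$, at most one of $s$ and $s'$ is $0$, so exactly one of the following two cases occurs.

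In the main case, $p\neq 0$ and $p'\neq 0$, so $B_m(s)$ and $B_{m'}(s')$ are genuine disks lying in the closed upper half-plane, and each meets $\partial\bbH$ in a single point, namely $-\frac{q}{p}$ and $-\frac{q'}{p'}$ respectively; these two points are distinct. Consequently neither disk contains the other, so their tangency is external and the Euclidean distance between their centers equals the sum of their radii. Since the centers lie at heights equal to the respective radii, squaring this relation and using $(r_1+r_2)^2-(r_1-r_2)^2=4r_1r_2$ gives
\[
\Bigl(\frac{q}{p}-\frac{q'}{p'}\Bigr)^2=4\cdot\frac{1}{2mp^2}\cdot\frac{1}{2m'p'^2},
\]
that is, $(pq'-p'q)^2=\frac{1}{mm'}$. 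Taking square roots and recalling from Section~\ref{sec:inttnnums} that $\zi(s,s')=|pq'-p'q|$ then gives $\zi(s,s')=\frac{1}{\sqrt{mm'}}$.

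In the remaining case, say $p=0$ and $q=1$, so that $p'\neq 0$. Here $B_m(s)=\{z\in\bbH:\text{Im}(z)>m\}$, while $B_{m'}(s')$ is the disk of radius $\frac{1}{2m'p'^2}$ whose center has height $\frac{1}{2m'p'^2}$, hence a disk occupying heights from $0$ up to $\frac{1}{m'p'^2}$. This disk and the half-plane are tangent exactly when the top of the disk lies on the line $\text{Im}(z)=m$, i.e.\ when $\frac{1}{m'p'^2}=m$, so $p'^2=\frac{1}{mm'}$. Since $\zi(s,s')=|0\cdot q'-p'\cdot 1|=|p'|$, this again yields $\zi(s,s')=\frac{1}{\sqrt{mm'}}$.

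The computation is routine; the only points needing care are the degenerate case in which one horoball is a half-plane rather than a disk, and the observation that in the generic case the tangency is necessarily \emph{external}, which is what forces the distance between centers to equal the sum, rather than the difference, of the radii.
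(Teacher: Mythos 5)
Your proof is correct, but it takes a different route from the paper's in the main case. The paper reduces everything to the degenerate configuration: it invokes Corollary 6.2 of \cite{cfpp}, which says that elements of $\text{PSL}(2,\bbZ)$ permute the horoballs $B_m(\cdot)$ equivariantly, together with the $\text{PSL}(2,\bbZ)$-invariance of $\zi$, to assume $s'=0$ (so that $B_{m'}(s')$ is the half-plane $\{\text{Im}(z)>m'\}$), and then reads off $m'=\frac{1}{mp^2}$ from the diameter formula --- which is exactly your second case. You instead treat the generic configuration of two disks head-on, using the Pythagorean identity $(r_1+r_2)^2-(r_1-r_2)^2=4r_1r_2$ for externally tangent circles resting on $\partial\bbH$. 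What your approach buys is self-containedness: you need only the explicit description of the horoballs (tangency point $-q/p$, diameter $\frac{1}{mp^2}$) and elementary Euclidean geometry, not the M\"obius-equivariance statement from \cite{cfpp}. What the paper's reduction buys is the avoidance of the case split and of the (small but genuine) point you rightly flag, namely that the tangency of the two disks must be external; after reduction to $s'=0$ that issue disappears. Your justification of externality --- each disk meets $\overline{\bbH}\setminus\bbH$ only at its own tangency point, and these points are distinct, so neither disk can contain the other --- is sound, and your handling of the $p=0$ normalization ($q=\pm1$, and $|pz+q|^2$ is unchanged under $(p,q)\mapsto(-p,-q)$) is fine.
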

  \begin{proof} Corollary 6.2 of \cite{cfpp} shows that if $\zv\in
\text{PSL}(2,\bbZ)$ and $t$, $t'$ are slopes such that
$\zv(-1/t)=-1/t'$, then $\zv(B_m(t))=B_m(t')$.  This and the fact that
$\zi$ is invariant under the action of $\text{PSL}(2,\bbZ)$ imply that
to prove the lemma, we may assume that $s'=0$.  This assumption and
the fact that $B_m(s)$ and $B_{m'}(s')$ are tangent to each other
imply that $m'$ is the Euclidean diameter of $B_m(s)$.  Section 6 of
\cite{cfpp} determines this diameter, implying that
$m'=\frac{1}{mp^2}$.  Hence
  \begin{equation*}
\frac{1}{\sqrt{mm'}}=\left|p\right|=\left|p\cdot 1-q\cdot
0\right|=\zi(s,s').
  \end{equation*}
This proves Lemma~\ref{lemma:tangent}.
\end{proof}

Having proved the first lemma, we turn to the second.  For this, let
$P$ be a parabolic element of $\text{SL}(2,\bbZ)$.  Then $P$ acts on
$\partial \bbH$ with exactly one fixed point.  Every other point in
$\partial \bbH$ moves in the same direction, either clockwise or
counterclockwise.  We say that $P$ is positive if every other point
moves in the counterclockwise direction.  For example, the element
$\left[\begin{smallmatrix}1 & 1 \\ 0 & 1
\end{smallmatrix}\right]$ of $\text{SL}(2,\bbZ)$ generates the stabilizer
of $\infty $ in $\text{SL}(2,\bbZ)$ and it is positive.  For every
extended rational number $s$, there exists a positive generator of the
stabilizer of $s$ in $\text{SL}(2,\bbZ)$, and this generator is unique
up to multiplication by $\pm 1$.  This brings us to the second lemma,
where we let tr denote the trace map on $2\times 2$ matrices.

\begin{lemma}\label{lemma:trace} Let $P_1$ and $P_2$ be
parabolic elements of $\text{SL}(2,\bbZ)$.  Suppose that $\pm P_i$ is
the $n_i$th power of a positive generator of the stabilizer of the
extended rational number $s_i$ which $P_i$ fixes for $i\in \{1,2\}$.
Then
  \begin{equation*}
\left|\text{tr}(P_1P_2^{-1})\right|=\left|2+n_1n_2\zi(s_1,s_2)^2\right|.
  \end{equation*}
\end{lemma}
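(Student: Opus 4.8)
The plan is to exploit the $\text{SL}(2,\bbZ)$-invariance of $\zi$ to reduce to the normalized case $s_1=\infty $, and then to finish with a short $2\times 2$ matrix computation.  The ingredient to set up first is an explicit description of the positive generators.  For a slope $s=\frac{p}{q}$ in reduced form, choose $\zg\in \text{SL}(2,\bbZ)$ with $\zg(\infty )=\frac{p}{q}$ (possible by B\'ezout) and set $U_s=\zg\left[\begin{smallmatrix}1&1\\0&1\end{smallmatrix}\right]\zg^{-1}$.  This depends only on the slope $s$: a different choice of $\zg$ differs by right multiplication by an element of the stabilizer of $\infty $, which commutes with $\left[\begin{smallmatrix}1&1\\0&1\end{smallmatrix}\right]$.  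Since $\left[\begin{smallmatrix}1&1\\0&1\end{smallmatrix}\right]$ generates the stabilizer of $\infty $ and is positive, and conjugation by $\text{SL}(2,\bbR)$ preserves the orientation of $\partial \bbH$, the element $U_s$ is a positive generator of the stabilizer of $\frac{p}{q}$.  A direct computation gives
  \begin{equation*}
U_s=\left[\begin{smallmatrix} 1-pq & p^2 \\ -q^2 & 1+pq \end{smallmatrix}\right]=I+N,\qquad N=\left[\begin{smallmatrix}-pq & p^2\\ -q^2 & pq\end{smallmatrix}\right],
  \end{equation*}
and since $N^2=0$ we get $U_s^{\,n}=I+nN$ for every $n\in \bbZ$.

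Next comes the reduction.  For any $\zf\in \text{SL}(2,\bbZ)$, the conjugate $\zf U_s\zf^{-1}$ is again a positive generator of the stabilizer of $\zf(s)$, hence equals $U_{\zf(s)}$ up to sign; therefore replacing $(P_1,P_2)$ by $(\zf P_1\zf^{-1},\zf P_2\zf^{-1})$ preserves the hypotheses of the lemma, with each $s_i$ replaced by $\zf(s_i)$.  This substitution leaves $\left|\text{tr}(P_1P_2^{-1})\right|$ unchanged, and by the $\text{SL}(2,\bbZ)$-invariance of $\zi$ it leaves $\left|2+n_1n_2\zi(s_1,s_2)^2\right|$ unchanged as well.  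Since $\text{SL}(2,\bbZ)$ acts transitively on $\overline{\bbQ}$, we may assume $s_1=\infty $, so that $P_1=\pm\left[\begin{smallmatrix}1 & n_1\\ 0 & 1\end{smallmatrix}\right]$.

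Finally, write $s_2=\frac{p}{q}$ in reduced form; then $\zi(s_1,s_2)=\zi(\infty ,\tfrac{p}{q})=|q|$, and by the formula for $U_s^{\,n}$ we have $P_2=\pm\left[\begin{smallmatrix}1-n_2pq & n_2p^2\\ -n_2q^2 & 1+n_2pq\end{smallmatrix}\right]$, hence $P_2^{-1}=\pm\left[\begin{smallmatrix}1+n_2pq & -n_2p^2\\ n_2q^2 & 1-n_2pq\end{smallmatrix}\right]$.  Multiplying out $P_1P_2^{-1}$, the sum of its diagonal entries is $2+n_1n_2q^2$, so $\text{tr}(P_1P_2^{-1})=\pm(2+n_1n_2\,\zi(s_1,s_2)^2)$; taking absolute values yields the asserted identity.  (The degenerate case $s_1=s_2$, in which $q=0$ and $\zi(s_1,s_2)=0$, is included: both sides equal $2$.)  I expect the only genuinely non-routine point to be the identification of $U_s$ — in particular the positivity claim and the fact that $U_s$ is well defined as an element depending only on $s$ (not on $\zg$, nor on the sign of $(p,q)$).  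Once that is in place, the remainder is essentially one matrix multiplication together with the bookkeeping of an overall sign.
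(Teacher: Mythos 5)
Your proof is correct and follows essentially the same route as the paper's: normalize $s_1=\infty$ by conjugation (using the $\text{SL}(2,\bbZ)$-invariance of both sides), write $P_2$ as a conjugate of $\left[\begin{smallmatrix}1&n_2\\0&1\end{smallmatrix}\right]$, and read off the trace of $P_1P_2^{-1}$ as $2+n_1n_2q^2$. The only difference is cosmetic — you justify the well-definedness and positivity of the generator $U_s$ explicitly, whereas the paper simply asserts the form $P_2=Q\left[\begin{smallmatrix}1&n_2\\0&1\end{smallmatrix}\right]Q^{-1}$ — and the computation is identical.
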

  \begin{proof} The lemma is insensitive to conjugating $P_1$ and
$P_2$ by the same element of $\text{SL}(2,\bbZ)$.  Hence we may assume
that $s_1=\infty $.  The lemma is also insensitive to multiplying
$P_1$ by $-1$.  Hence we may assume that
$P_1=\left[\begin{smallmatrix}1 & n_1 \\ 0 & 1
\end{smallmatrix}\right]$.  We may also assume that
$P_2=Q\left[\begin{smallmatrix}1 & n_2 \\ 0 & 1
\end{smallmatrix}\right]Q^{-1}$ for some matrix
$Q=\left[\begin{smallmatrix}p & r \\ q & s \end{smallmatrix}\right]\in
\text{SL}(2,\bbZ)$.  So $s_2=\frac{p}{q}$.  Then
  \begin{align*}
P_2^{-1} & =Q\left[\begin{matrix}1 & -n_2 \\ 0 & 1
\end{matrix}\right]Q^{-1}
=Q\left(1+\left[\begin{matrix}0 & -n_2 \\ 0 & 0
\end{matrix}\right]\right)Q^{-1}=
1+Q\left[\begin{matrix}0 & -n_2 \\ 0 & 0 \end{matrix}\right]Q^{-1}\\
 & = 1+\left[\begin{matrix}p & r \\ q & s
\end{matrix}\right]\left[\begin{matrix}
0 & -n_2 \\ 0 & 0 \end{matrix}\right]\left[\begin{matrix}s & -r \\ -q &
p \end{matrix}\right]=1+n_2\left[\begin{matrix}0 & -p \\ 0 & -q
\end{matrix}\right]\left[\begin{matrix}s & -r \\ -q & p
\end{matrix}\right]\\
&=1+n_2\left[\begin{matrix}pq & -p^2 \\ q^2 & -pq \end{matrix}\right].
  \end{align*}
So  
  \begin{align*}
\text{tr}(P_1P_2^{-1}) & =
\text{tr}\left(\left(1+n_1\left[\begin{matrix}0 & 1 \\ 0 & 0
\end{matrix}\right]\right)\left(1+n_2\left[\begin{matrix}pq & -p^2 \\
q^2 &
-pq \end{matrix}\right]\right)\right)\\
 & =\text{tr}(1)+\text{tr}\left(n_1\left[\begin{matrix}0 & 1 \\ 0 &
0 \end{matrix}\right]\right)+\text{tr}\left(n_2\left[\begin{matrix}pq &
-p^2 \\ q^2 & -pq \end{matrix}\right]\right)+\text{tr}\left(n_1n_2
\left[\begin{matrix}0 & 1 \\ 0 & 0
\end{matrix}\right]\left[\begin{matrix}pq & -p^2 \\ q^2 & -pq
\end{matrix}\right]\right)\\
&=2+n_1n_2q^2=2+n_1n_2(1\cdot q-0\cdot p)^2=2+n_1n_2\zi(s_1,s_2)^2.
  \end{align*}
This proves Lemma~\ref{lemma:trace}.

\end{proof}

With Lemmas~\ref{lemma:tangent} and \ref{lemma:trace} in hand, all
that we need for Theorem~\ref{thm:lipschitz} is the definition of a
slope multiplier.  For this, let $s\in \overline{\bbQ}$.  Let $\zg$ be
a simple closed curve in $S^2\setminus P_f$ with slope $s$ with respect to a
fixed marking (Section~\ref{sec:slopefns}) of $S^2\setminus P_f$.  Let
$\widetilde{\zg}_1,\dotsc,\widetilde{\zg}_k$ be the connected
components of $f^{-1}(\zg)$ which are neither peripheral nor null
homotopic.  Let $d_i$ be the degree with which $f$ maps
$\widetilde{\zg}_i$ to $\zg$ for $i\in\{1,\ldots,k\}$.  Then the
multiplier of $s$ is $\zr=\sum_{i=1}^{k}d^{-1}_i$.  Because $P_f$
contains only four points, the slopes of
$\widetilde{\zg}_1,\dotsc,\widetilde{\zg}_k$ are equal.  If this slope
equals the slope of $\zg$, then $\zg$ forms an $f$-stable multicurve.
It therefore has a Thurston matrix, which in this case is simply a
$1\times 1$ matrix with entry $\zr$.  The spectral radius of this
matrix is called a Thurston multiplier.  W. Thurston's
characterization of rational maps in this situation implies that if
the orbifold of $f$ is hyperbolic, then $f$ is equivalent to a
rational map if and only if the multiplier of every $f$-stable
multicurve is less than 1.

\begin{thm}\label{thm:lipschitz}Let $f$ be a Thurston map
whose postcritical set $P_f$ has exactly four points.  Let $\zm_f$ be
the slope function of $f$ with respect to a fixed marking of
$S^2\setminus P_f$.  Let $s_1$ and $s_2$ be slopes with multipliers
$\zr_1$ and $\zr_2$ such that $\zm_f(s_1)\ne \odot $ and
$\zm_f(s_2)\ne \odot $.  Then
  \begin{equation*}
\zi(\zm_f(s_1),\zm_f(s_2))\le \frac{1}{\sqrt{\zr_1\zr_2}}\zi(s_1,s_2).
  \end{equation*}
Furthermore, if $s_1\ne s_2$, then the inequality is strict if and
only if the orbifold of $f$ is hyperbolic.
\end{thm}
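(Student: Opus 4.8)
The plan is to translate the statement about intersection numbers of slopes into a statement about traces of parabolic elements of $\text{SL}(2,\bbZ)$, using Lemma~\ref{lemma:trace} and Lemma~\ref{lemma:tangent}, and then to obtain the inequality from an elementary fact about how parabolic fixed points and their horoballs behave under the pullback map $\zs_f$. First I would dispose of the degenerate cases. If $\zm_f(s_1)=\zm_f(s_2)$, both sides involving $\zi(\zm_f(s_1),\zm_f(s_2))$ vanish, so the inequality is trivially true (and non-strict); this covers in particular the case $s_1=s_2$ when we are not in the hyperbolic-orbifold situation. Also, when the orbifold of $f$ is not hyperbolic, the relevant maps behave like Euclidean affine maps and the inequality should degenerate to an equality; I would treat that parenthetically and focus on the main inequality.

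For the main case, let $t_i=-1/s_i$ and $t_i'=-1/\zm_f(s_i)$ be the corresponding cusps, so that by the properties of $\zs_f$ recalled in the introduction, $\zs_f(t_i)=t_i'$. Pulling back complex structures via $f$ scales the multiplier appropriately: concretely, I would use the relation (implicit in Section~6 of \cite{cfpp} and the definition of slope multiplier) that if $B_m(\zm_f(s))$ is a horoball at the cusp $t'=-1/\zm_f(s)$, then its preimage under $\zs_f$ contains the horoball $B_{m\zr}(s)$ at $t=-1/s$, where $\zr$ is the multiplier of $s$ — that is, pulling back by $f$ enlarges a horoball at $\zm_f(s)$ into one at $s$ by the factor $\zr$. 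Now choose horoballs $B_{m_1}(\zm_f(s_1))$ and $B_{m_2}(\zm_f(s_2))$ that are tangent to each other; by Lemma~\ref{lemma:tangent}, $\zi(\zm_f(s_1),\zm_f(s_2))=1/\sqrt{m_1 m_2}$. Their preimages under $\zs_f$ contain $B_{m_1\zr_1}(s_1)$ and $B_{m_2\zr_2}(s_2)$. Since $\zs_f$ is holomorphic from $\bbH$ to $\bbH$ and maps these horoballs into the two original (tangent, hence disjoint-interior) horoballs, the two preimage horoballs $B_{m_1\zr_1}(s_1)$ and $B_{m_2\zr_2}(s_2)$ must themselves have disjoint interiors. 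Comparing with the tangent horoballs at $s_1,s_2$ and invoking Lemma~\ref{lemma:tangent} once more gives $\zi(s_1,s_2)\ge 1/\sqrt{m_1\zr_1\, m_2\zr_2}=\sqrt{\zr_1\zr_2}\cdot\zi(\zm_f(s_1),\zm_f(s_2))$, which is exactly the asserted inequality after rearranging.

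For the strictness clause, I would argue that equality forces $\zs_f$ to carry the pair of tangent horoballs at $s_1,s_2$ exactly onto the pair of tangent horoballs at $\zm_f(s_1),\zm_f(s_2)$, i.e. $\zs_f$ would have to be an isometry near $\partial\bbH$ along the relevant cusps; combined with the structure of $\zs_f$ this is possible precisely when the orbifold of $f$ is \emph{not} hyperbolic (in the parabolic/Euclidean case $\zs_f$ is an affine map of $\bbH$ and preserves horoball tangency exactly, giving equality for all $s_1,s_2$), and is excluded in the hyperbolic case by the Schwarz–Pick lemma, which makes the holomorphic self-map $\zs_f$ strictly contract the hyperbolic metric unless it is an isometry. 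I expect the main obstacle to be making the horoball bookkeeping precise — in particular pinning down the exact scaling relation between the multiplier $\zr$ and the sizes of horoballs under $\zs_f$, and carefully justifying that tangency (rather than mere disjointness) is preserved in the equality case — since the geometry of $\zs_f$ near the augmented boundary is only guaranteed to be continuous (Selinger), not smooth, and one must extract the contraction estimate from the interior holomorphicity together with Lemma~\ref{lemma:trace}'s trace computation rather than from boundary regularity directly. An alternative, cleaner route for the inequality itself is to avoid $\zs_f$ entirely: realize $\zg_1,\zg_2$ as geodesics and estimate extremal lengths of their preimage curve families, exactly as in the original half-space theorem proof, but the trace/horoball argument above is the one that dovetails with the two lemmas just proved.
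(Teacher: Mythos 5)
Your argument for the main inequality is essentially the paper's first proof (horoballs plus Lemma~\ref{lemma:tangent}), just run from the image cusps back to the source cusps, but the horoball scaling you quote is inverted. The fact from Line 6.5 of \cite{cfpp} is that $\zs_f$ maps $B_\mu(s)$ \emph{into} $B_{\zr\mu}(\zm_f(s))$; in preimage form this says $\zs_f^{-1}(B_m(\zm_f(s)))\supseteq B_{m/\zr}(s)$, not $B_{m\zr}(s)$. With the horoballs $B_{m_i\zr_i}(s_i)$ you actually name, your chain would yield $\zi(\zm_f(s_1),\zm_f(s_2))\le\sqrt{\zr_1\zr_2}\,\zi(s_1,s_2)$, which is not the theorem; your final displayed identity is only correct for the horoballs $B_{m_i/\zr_i}(s_i)$, so the two slips cancel and the argument is repairable. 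Note also that your disjointness step tacitly needs $\zs_f$ nonconstant (a common interior point of the two preimage horoballs is only forced into the fiber over the single tangency point); the paper sidesteps this by pushing forward: the tangency point of $B_{m_1}(s_1)$ and $B_{m_2}(s_2)$ lands in both image closures, and ``closures meet'' already gives the inequality via Lemma~\ref{lemma:tangent}.

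The genuine gap is the strictness clause. The paper does not prove it by a Schwarz--Pick equality-rigidity analysis of $\zs_f$; it gives a second, algebraic proof of the whole inequality using Lemma~\ref{lemma:trace}: one produces liftable powers of Dehn twists $\zt_i$ about $s_i$ whose lifts are $\zr_i n_i$-th powers of twists about $\zm_f(s_i)$ (as in Theorem 7.1 of \cite{cfpp}), computes $\left|\text{tr}\right|$ of the two products via Lemma~\ref{lemma:trace}, and invokes statement 3 of Theorem 4.4 of \cite{fpp2}, which says the trace strictly drops under lifting exactly when the orbifold is hyperbolic and is preserved when it is Euclidean. Your sketch would instead require you to prove (i) that equality forces, after conjugating the relevant cusps to $\infty$, an identity $\text{Im}(\zs_f(w))=\zr\,\text{Im}(w)$ at an interior point and hence (by harmonicity of the nonnegative difference) that $\zs_f$ is a M\"obius map, and (ii) the nontrivial fact that $\zs_f$ is a M\"obius isometry if and only if the orbifold of $f$ is Euclidean --- neither of which you establish, and the second of which is precisely the content the paper imports from \cite{fpp2}. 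As written, the strictness direction is an outline of a plausible alternative route, not a proof.
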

  \begin{proof} We first prove the inequality using
Lemma~\ref{lemma:tangent}.  Then we give a second proof of the
inequality using Lemma~\ref{lemma:trace}.  The second argument also
proves the statement concerning strictness of the inequality.

Line 6.5 of \cite{cfpp} deals with NET maps, but it holds whenever
$P_f$ contains exactly four points.  It shows that the pullback map
$\zs_f$ of $f$ on the upper half-plane, naturally viewed as the
Teichm\"{u}ller space of $f$, maps $B_m(s_i)$ into
$B_{\zr_im}(\zm_f(s_i))$ for every positive real number $m$ and $i\in
\{1,2\}$.  We choose positive real numbers $m_1$ and $m_2$ so that
$B_{m_1}(s_1)$ and $B_{m_2}(s_2)$ are tangent.  Then $\zs_f$ maps this
point of tangency to the closures of both $B_{\zr_1m_1}(\zm_f(s_1))$
and $B_{\zr_2m_2}(\zm_f(s_2))$.  Because the closures of
$B_{\zr_1m_2}(\zm_f(s_1))$ and $B_{\zr_2m_2}(\zm_f(s_2))$ intersect
nontrivially, Lemma~\ref{lemma:tangent} implies that
  \begin{equation*}
\zi(\zm_f(s_1),\zm_f(s_2))\le
\frac{1}{\sqrt{\zr_1m_1\zr_2m_2}}=
\frac{1}{\sqrt{\zr_1\zr_2}}\frac{1}{\sqrt{m_1m_2}}=
\frac{1}{\sqrt{\zr_1\zr_2}}\zi(s_1,s_2).
  \end{equation*}
This completes our first proof of the inequality of
Theorem~\ref{thm:lipschitz}.

We now give another proof of this inequality using
Lemma~\ref{lemma:trace} instead of Lemma~\ref{lemma:tangent}.  Let $G$
be the pure modular group of $(S^2,P_f)$.  Let $G_f$ be the group of
liftables for $f$ in $G$.  So $\zj\in G_f$ if and only if there exists
$\widetilde{\zj}\in G$ such that $\zj[f]=[f]\widetilde{\zj}$, where
$[f]$ denotes the homotopy class of $f$ relative to $P_f$.  As in
Theorem 7.1 of \cite{cfpp}, we have the following.  Let $i\in
\{1,2\}$.  There exists $\zt_i\in G_f$ whose pullback map
$\zm_{\zt_i}$ on slopes is the $n_i$th power of the positive element
which generates the stabilizer of $s_i$ in $\text{PSL}(2,\bbZ)$ for
some positive integer $n_i$.  Moreover,
$\widetilde{\zt}_i=(\zt'_i)^{\zr_i}$, where $\zt'_i$ is the element of
$G$ such that $\zm_{\zt'_i}$ is the $n_i$th power of the positive
element which generates the stabilizer of $s'_i$ in
$\text{PSL}(2,\bbZ)$.

Now we apply Lemma~\ref{lemma:trace} to matrices $P_1$ and $P_2$ in
$\text{SL}(2,\bbZ)$ which represent $\zm_{\zt_1}$ and $\zm_{\zt_2}$,
respectively, obtaining that
  \begin{equation*}
\left|\text{tr}(P_1P_2^{-1})\right|=2+n_1n_2\zi(s_1,s_2)^2.
  \end{equation*}
We next apply Lemma~\ref{lemma:trace} again, this time to matrices
$P'_1$ and $P'_2$ in $\text{SL}(2,\bbZ)$ which represent
$\zm_{\widetilde{\zt}_1}$ and $\zm_{\widetilde{\zt}_2}$, respectively,
obtaining that
  \begin{equation*}
\left|\text{tr}(P_1'{P'_2}^{-1})\right|=
2+\zr_1n_1\zr_2n_2\zi(\zm_f(s_1),\zm_f(s_2))^2.
  \end{equation*}
There is nothing to prove if $s_1=s_2$, so suppose that $s_1\ne s_2$.
Then $\zi(s_1,s_2)>0$ and $\left|\text{tr}(P_1P_2^{-1})\right|>2$.  So
the matrix $P_1P_2^{-1}$ in $\text{SL}(2,\bbZ)$ which represents
$\zm_{\zt_1}\zm_{\zt_2}^{-1}$ is hyperbolic.  Just as $f$ induces a
pullback map $\zs_f\co \bbH\to \bbH$, we have a pullback map
$\zs_\zj\co \bbH\to \bbH$ for every $\zj\in G$.  Section 6 of
\cite{fkklpps} shows that a fixed conjugation takes $\zm_\zj$ to
$\zs_\zj$ for every $\zj$ in $G$.  Hence $\zs_{\zt_1}\zs_{\zt_2}^{-1}$
is hyperbolic.  Theorem 4.4 of \cite{fpp2} is proven in the context of
NET maps, but it holds whenever $P_f$ has exactly four points.  So
statement 3 of Theorem 4.4 of \cite{fpp2} implies that if the orbifold
of $f$ is hyperbolic, then
  \begin{equation*}
\left|\text{tr}(P'_1{P'_2}^{-1})\right|<\left|\text{tr}(P_1{P_2}^{-1})\right|.
  \end{equation*}
If the orbifold of $f$ is Euclidean, then this inequality is an
equality.  Thus
  \begin{equation*}
\zr_1\zr_2\zi(\zm_f(s_1),\zm_f(s_2))^2\le \zi(s_1,s_2)^2.
  \end{equation*}
This easily gives our second proof of the inequality of
Theorem~\ref{thm:lipschitz} and also the statement about strictness of
the inequality.

\end{proof}

\section{Review of NET map slope function evaluation}
\label{sec:eval}\nosubsections

When considering examples of NET maps later, we will want to evaluate
NET map slope functions.  So in this section we discuss how to evaluate
the slope function $\zm_f$ of a NET map $f$ given by a presentation as
in \cite{fpp1}.  Specifically, we review Theorem 5.1 of \cite{cfpp}
and its visual interpretation.

We begin with a slope $s=\frac{p}{q}$, where $p$ and $q$ are
relatively prime integers.  Theorem 4.1 of \cite{cfpp} implies that if
$\zg$ is a simple closed curve in $S^2\setminus P_f$ with slope $s$,
then every connected component of $f^{-1}(\zg)$ maps to $\zg$ with the
same local degree $d(s)$.  We have a line segment $S$ in $\bbR^2$ with
slope $s$ and endpoints $v$ and $w=v+2d(s)(q,p)$.  We choose $\zg$ and
$S$ so that $S$ maps to one of the connected components of
$f^{-1}(\zg)$ in the usual quotient space $\bbR/\zG_1$.  The line
segment $S$ meets the spin mirrors of $f$ transversely in finitely
many points not equal to either $v$ or $w$.  Let $\zl_1,\dotsc,\zl_n$
be the centers of these spin mirrors in order from $v$ to $w$.  We
have a basis $B$ of the usual lattice $\zL_1$.  The conclusion of
Theorem 5.1 of \cite{cfpp} is that $\zm_f(s)$ is the slope of the line
segment joining $v$ and $w'=(-1)^nw+2\sum_{i=1}^{n}(-1)^{i+1}\zl_i$
with respect to the basis $B$.  Although this conclusion is correct,
it is peculiar because it fails to recognize that $n$ is always even,
and so $(-1)^n=1$.  In fact, this follows from the theorem.  Indeed,
it is implicit in the conclusion that $v-w'\in \zL_1$.  If $n$ is odd,
then $v-w\in \zL_1$, $v+w\in \zL_1$ and hence $2v\in \zL_1$.  But we
may perturb $v$, and so $\zL_1$ contains a line segment in $\bbR^2$.
This is absurd.

In this paragraph we give a perhaps clearer explanation of why the
integer $n$ in the previous paragraph is even.  The line segment $S$
maps to a simple closed curve $\zd$ in $\bbR^2/\zG_1$, and
$f(\zd)=\zg$.  The curve $\zd$ separates two elements of $P_f$ from
the other two elements of $P_f$.  We implicitly have that $f=h\circ
g$, where $g$ is a Euclidean NET map and $h$ is a push map.  As in
Figure~\ref{fig:ovals}, the curve $\zd$ also separates two elements of
$P_g$ from the other two elements of $P_g$.  The image in
$\bbR^2/\zG_1$ of a spin mirror is an arc $\za$ with one endpoint in
$P_g$ and one endpoint in $P_f$.  The curve $\zd$ separates the
endpoints of $\za$ if and only if the intersection number
$\zi(\za,\zd)$ is odd.  Because these four arcs $\za$ pair the
elements of $P_f$ and $P_g$, the sum of the intersection numbers
$\zi(\za,\zd)$ is even.  Thus $S$ meets an even number of spin mirrors
and $n$ is even.

We next discuss by example what might be called the visual or
geometric interpretation of NET map slope function evaluation, as
discussed immediately after Remark 5.2 of \cite{cfpp}.
Figure~\ref{fig:visual1} shows part of $\bbR^2$.  Elements of the
lattice $\bbZ^2$ are marked with dots.  The large dots are elements of
the sublattice $\zL_1$ with basis consisting of $(4,0)$ and $(4,1)$.
We assume that $(4,0)$ and $(4,1)$ are part of a presentation for a
NET map $f$ and that the dashed horizontal line segments in
Figure~\ref{fig:visual1} are spin mirrors determined by this
presentation.  Figure~\ref{fig:visual1} shows a line segment $S$ with
endpoints $v$, $w$ and slope $\frac{1}{3}$.  The points where $S$
meets spin mirrors are labeled $A$, $B$, $C$, $D$.  The image of $S$
in the usual quotient space $\bbR^2/\zG_1$ is a simple closed curve
which $f$ maps to a simple closed curve with slope $\frac{1}{3}$
relative to our presentation.  To compute $\zm_f(\frac{1}{3})$, we
turn to Figure~\ref{fig:visual2}.  We imagine a photon traveling along
the initial segment of $S$ from $v$ to $A'=A$.  At $A'$ the photon
spins about the center of a spin mirror to $A''$.  It then travels
parallel to $S$ to $B'$, where it encounters another spin mirror.
Then it spins to $B''$ and travels to $C'$.  Then it spins to $C''$
and travels to $D'$.  Then it spins to $D''$ and travels to $w'$,
where it stops.  The sum of the lengths of the line segments parallel
to $S$ in Figure~\ref{fig:visual2} is the length of $S$.  The vector
$v-w'$ is in $2\zL_1$.  The slope of the line segment joining $v$ and
$w'$ relative to the basis consisting of $(4,0)$ and $(4,1)$ is
$\frac{2}{-2}=-1$.  So $\zm_f(\frac{1}{3})=-1$.

  \begin{figure}
\centerline{\includegraphics{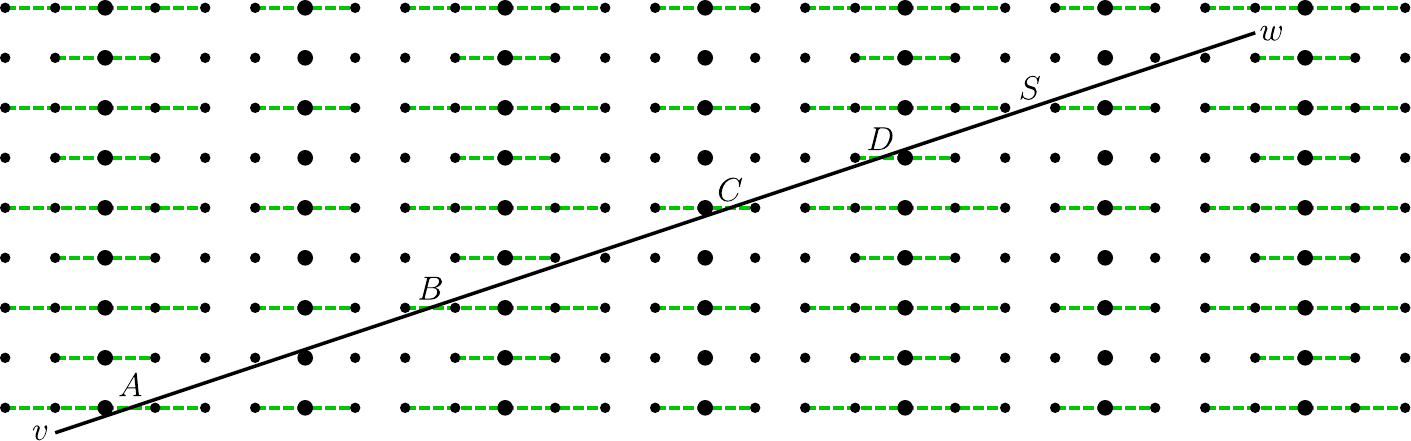}}
\caption{Evaluating a NET map slope function at slope $\frac{1}{3}$}
\label{fig:visual1}
  \end{figure}

  \begin{figure}
\centerline{\includegraphics{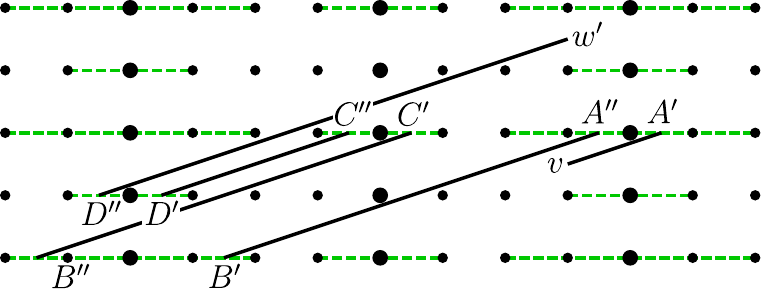}}
\caption{Evaluating a NET map slope function at slope $\frac{1}{3}$}
\label{fig:visual2}
  \end{figure}

\section{Lipschitz-type conditions for NET maps}\label{sec:nmlip}
\nosubsections

Theorem~\ref{thm:lipschitz} provides a Lipschitz-type condition for
the slope function of a Thurston map with exactly four postcritical
points.  This section improves on this in the special case of NET
maps.  Most of the results of this section rest on the following
elementary lemma, which we take for granted.

\begin{lemma}\label{lemma:torus} Let $T^2$ be the standard torus
$\bbR^2/\bbZ^2$ with its natural translation surface structure.  Let
$S^2$ be the 2-sphere half-translation surface gotten from $T^2$ by
identifying every point $x\in T^2$ with $-x$.  Then the following
statements hold.
\begin{enumerate}
  \item Two closed geodesics in $T^2$ with equal slopes have equal lengths.
  \item Let $\zg$ and $\zd$ be closed geodesics in $T^2$ with distinct
slopes $s$ and $t$.  Then the points in $\zg\cap \zd$ partition $\zg$
into $\zi(s,t)$ segments of equal length.
  \item Let $\zg$ and $\zd$ be closed geodesics in $S^2\setminus P$ with
distinct slopes $s$ and $t$, where $P$ is the set of four branch
values of the quotient map from $T^2$ to $S^2$.  Then the points in
$\zg\cap \zd$ partition $\zg$ into $2\zi(s,t)$ segments whose lengths
alternate between two (possibly equal) values.
\end{enumerate}
\end{lemma}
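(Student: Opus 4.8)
The plan is to treat the three statements in sequence, with (1) and (2) being standard facts about flat tori that feed into (3), which is the only part requiring genuine work. For statement (1): a closed geodesic in $T^2=\bbR^2/\bbZ^2$ of slope $s=p/q$ (in lowest terms) lifts to a line of slope $s$ in $\bbR^2$, and the geodesic closes up after traversing the primitive lattice vector $(q,p)$; hence its length is $\sqrt{p^2+q^2}$, independent of which geodesic of that slope one picks. For statement (2): given $\zg$ of slope $s=p/q$ and $\zd$ of slope $t=p'/q'$, lift $\zg$ to the line $\bbR\cdot(q,p)$ through a point on $\zd$; the preimages in this line of $\zd$ form a coset of the subgroup $\bbZ^2\cap(\bbR\cdot(q,p))+\bbZ\cdot(\text{something})$, and a direct computation with the determinant $|pq'-p'q|=\zi(s,t)$ shows the intersection points are equally spaced along $\zg$, cutting it into $\zi(s,t)$ congruent segments. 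I would state these two with only a one-line justification each, since the excerpt says we may take the lemma "for granted" in spirit and these are textbook.

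For statement (3), the key idea is to pull everything back to $T^2$ via the degree-two quotient map $\pi\co T^2\to S^2$ branched over $P$ (the images of the four half-lattice points). Let $\zg,\zd\subset S^2\setminus P$ be closed geodesics of distinct slopes $s,t$. Their preimages $\widetilde{\zg}=\pi^{-1}(\zg)$ and $\widetilde{\zd}=\pi^{-1}(\zd)$ are closed geodesics (or unions thereof) in $T^2$; because $\zg$ avoids the branch points, $\pi|_{\widetilde\zg}\co\widetilde\zg\to\zg$ is an honest double cover, so $\widetilde\zg$ is a single closed geodesic of slope $s$ covering $\zg$ twice, and likewise for $\widetilde\zd$. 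By statement (2) the points of $\widetilde\zg\cap\widetilde\zd$ cut $\widetilde\zg$ into $\zi(\widetilde s,\widetilde t)$ equal arcs, where the relevant intersection number on $T^2$ is twice the one on $S^2$ (this factor of two is exactly the convention recorded in Section~\ref{sec:inttnnums}), so into $2\zi(s,t)$ equal arcs; projecting down, $\zg$ is cut into $2\zi(s,t)$ arcs by the points of $\zg\cap\zd$. The remaining point is that the lengths of these downstairs arcs alternate between (at most) two values. This is where the hyperelliptic involution $x\mapsto -x$ enters: it preserves $\widetilde\zg$ and $\widetilde\zd$, hence permutes the intersection points, and on the cyclically ordered set of $2\zi(s,t)$ intersection points it acts as an order-reversing involution (a reflection of the cycle). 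Therefore it identifies the arc from the $i$th point to the $(i{+}1)$st with some other arc in the cyclic sequence; tracking the reflection shows that all even-indexed arcs are carried to each other and all odd-indexed arcs are carried to each other, so downstairs there are at most two distinct lengths, occurring alternately around $\zg$.

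The main obstacle I anticipate is pinning down precisely how the involution acts on the cyclic order of the intersection points, and thus why the two length-classes are the \emph{alternating} ones rather than, say, all arcs equal or some other pattern. Concretely, I would parametrize $\widetilde\zg$ by arc length as $\bbR/L\bbZ$, arrange (after translating) that the involution becomes $u\mapsto -u$, note that the intersection set is then a subset of $\bbR/L\bbZ$ invariant under negation and equally spaced (from statement (2)), and check that negation sends the gap $[u_i,u_{i+1}]$ to the gap $[-u_{i+1},-u_i]=[u_{j},u_{j+1}]$ for the index $j$ determined by the reflection, which shifts parity by a fixed amount; a short parity bookkeeping then yields the alternation. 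One mild subtlety to handle is the degenerate possibility that an intersection point of $\widetilde\zg\cap\widetilde\zd$ is fixed by the involution — but such a point would be a branch point, contradicting $\zg,\zd\subset S^2\setminus P$, so this does not occur and the count of $2\zi(s,t)$ arcs downstairs is exact. I would also remark that the two values genuinely may coincide (e.g.\ when the configuration is itself symmetric), which is why the statement says "possibly equal."
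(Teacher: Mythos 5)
The paper offers no proof of this lemma at all --- it is introduced as ``the following elementary lemma, which we take for granted'' --- so your proposal can only be judged on its own merits. Your treatments of statements (1) and (2) are standard and fine, and pulling statement (3) back to the branched double cover $\pi\co T^2\to S^2$ is the right strategy. But your proof of (3) rests on a false topological claim: that $\pi^{-1}(\zg)$ is a \emph{single} closed geodesic double-covering $\zg$. In fact $\pi^{-1}(\zg)$ is \emph{two} disjoint parallel closed geodesics $\zg_1,\zg_2$ of slope $s$, interchanged by the involution $x\mapsto -x$, each mapping isometrically (degree $1$) onto $\zg$. One sees this directly: a slope-$s$ geodesic in $T^2$ invariant under $x\mapsto -x$ must pass through a $2$-torsion point, i.e.\ a branch point, which is excluded since $\zg\subset S^2\setminus P$; so the involution cannot preserve a component and must swap two of them. (This is also the convention recorded in Section~4 of the paper, where $\zi(s,t)$ is the intersection number of one lift of each curve, equal to \emph{half} the geometric intersection number downstairs.) Consequently your mechanism for the alternation --- the hyperelliptic involution acting as a reflection on the cyclic sequence of intersection points along a single connected preimage --- is unavailable: the involution does not act on the intersection points lying on one component at all. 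The intermediate step ``the relevant intersection number on $T^2$ is twice the one on $S^2$'' is likewise off: the slopes upstairs equal the slopes downstairs, so statement (2) gives $\zi(s,t)$ points, not $2\zi(s,t)$; the factor of $2$ comes from the two components of $\pi^{-1}(\zd)$, not from a change of intersection number.

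The repair is short and actually simpler than what you propose. Fix one component $\zg_1$ of $\pi^{-1}(\zg)$ and write $\pi^{-1}(\zd)=\zd_1\sqcup\zd_2$, where $\zd_2=-\zd_1$ is a parallel translate of $\zd_1$. By statement (2), $\zg_1\cap\zd_1$ and $\zg_1\cap\zd_2$ are each sets of $\zi(s,t)$ equally spaced points on $\zg_1$ with the \emph{same} spacing $L/\zi(s,t)$ (where $L$ is the length of $\zg_1$), the second set being a translate of the first along $\zg_1$. Two interleaved arithmetic progressions with equal common difference cut the circle into $2\zi(s,t)$ arcs whose lengths alternate between two values, equal exactly when the offset is half the spacing. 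Since $\pi|_{\zg_1}$ is an isometry onto $\zg$ carrying $\zg_1\cap\pi^{-1}(\zd)$ bijectively onto $\zg\cap\zd$, statement (3) follows. Your observation that no intersection point is fixed by the involution is correct but is subsumed by the fact that the components themselves are swapped.
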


We deal with a NET map $f$ with postcritical set $P_f$ for the rest of
this section.

We continue by introducing some notation.  Let $\zg$ be a simple
closed curve in $S^2\setminus P_f$ which is neither peripheral nor
null homotopic.  It was noted at the end of the penultimate paragraph
of Section~\ref{sec:corearcs} that $f$ maps every connected component
of $f^{-1}(\zg)$ to $\zg$ with the same degree.  Let $d(\zg)$ denote
this number.  Let $c(\zg)$ denote the number of these connected
components which are neither peripheral nor null homotopic.  We write
$c(s)=c(\zg)$ and $d(s)=d(\zg)$, where $s$ is the slope of $\zg$.

Here is the first theorem.  Remark~\ref{remark:nmlip1} shows that it
is a strengthening of Theorem~\ref{thm:lipschitz} in the case of NET
maps.

\begin{thm}\label{thm:nmlip1}Let $\zg$ and $\zd$ be simple
closed curves in $S^2\setminus P_f$ which are neither peripheral nor
null homotopic.  Let $\widetilde{\zg}$ and $\widetilde{\zd}$ be
connected components of $f^{-1}(\zg)$ and $f^{-1}(\zd)$ which are
neither peripheral nor null homotopic.  Then
  \begin{equation*}
\zi(\widetilde{\zg},\widetilde{\zd})\le
\frac{d(\zg)d(\zd)}{\deg(f)}\zi(\zg,\zd).
  \end{equation*}
\end{thm}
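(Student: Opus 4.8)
The plan is to work directly with the torus double cover and the core-arc machinery of Section~\ref{sec:corearcs}, using Lemma~\ref{lemma:torus} to turn intersection numbers into counts of geodesic segments. First I would realize $(S^2,P_f)$ as the half-translation surface coming from $T^2=\bbR^2/\bbZ^2$, and represent $\zg$ and $\zd$ by closed geodesics of slopes $s$ and $t$ respectively, chosen to be transverse. I would pick a core arc $\za$ for $\zg$ and a core arc $\zb$ for $\zd$; then $\zg$ is (homotopic to) the boundary of a regular neighborhood of $\za$, and likewise $\zd$ of $\zb$. The geometric content of Section~\ref{sec:corearcs} is that $f^{-1}(\za\cup\zb)$ is a graph $G$ whose components are the lifts of $\za$ and $\zb$, all but two of which close up into simple closed curves, and that the components of $f^{-1}(\zg)$ (the dashed curves in Figure~\ref{fig:ovals}) separate the $\za$-lifts from the $\zb$-lifts. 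In particular each nonperipheral, non-nullhomotopic component $\widetilde\zg$ of $f^{-1}(\zg)$ has a core arc which is a sub-arc of $G$ made up of lifts of $\za$, and similarly for $\widetilde\zd$ with lifts of $\zb$.

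The key computation is to count intersections of $\widetilde\za:=$ (core arc of $\widetilde\zg$) with $\widetilde\zb:=$ (core arc of $\widetilde\zd$) by pushing forward to $\za$ and $\zb$. Since $f$ restricted to each lift is a homeomorphism onto $\za$ (resp. $\zb$), an intersection point of $\widetilde\za$ with $\widetilde\zb$ maps to an intersection point of $\za$ with $\zb$; conversely, over each point of $\za\cap\zb$ there are exactly $\deg(f)$ preimages in $f^{-1}(\za)\cap f^{-1}(\zb)$. The arc $\widetilde\za$ consists of $d(\zg)$ lifts of $\za$ (by the degree statement recalled before the theorem, or directly: $f$ maps $\widetilde\zg$ to $\zg$ with degree $d(\zg)$, so its core arc is covered $d(\zg)$ times), and $\widetilde\zb$ consists of $d(\zd)$ lifts of $\zb$. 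Using Lemma~\ref{lemma:torus}(3), the points of $\za\cap\zb$ cut $\zb$ into $2\zi(s,t)$ geodesic segments of two alternating lengths; equivalently, the number of points of $\za\cap\zb$, weighted by the core-arc intersection conventions of Section~\ref{sec:inttnnums}, is $\zi(\za,\zb)=\zi(s,t)=\zi(\zg,\zd)$. Combining: the total (weighted) count of intersection points among all $\deg(f)$ lifts of $\za$ with all $\deg(f)$ lifts of $\zb$ equals $\deg(f)\cdot\zi(\zg,\zd)$, and this total is at least the sum over pairs of components, which is at least $\deg(f)$ times the minimum — but what I actually want is an averaging statement: among the $\deg(f)$ lifts of $\za$ there are $c(\zg)$ disjoint sub-arcs (one for each nonperipheral component of $f^{-1}(\zg)$) each made of $d(\zg)$ lifts, and similarly $c(\zd)$ sub-arcs of $d(\zd)$ lifts of $\zb$; the $\widetilde\za$-$\widetilde\zb$ intersections are a sub-collection of all lift-intersections, so
\begin{equation*}
\sum_{\widetilde\zg,\widetilde\zd}\zi(\widetilde\zg,\widetilde\zd)\ \le\ \deg(f)\,\zi(\zg,\zd),
\end{equation*}
where the sum runs over all nonperipheral, non-nullhomotopic components, and since all $\widetilde\zg$ (resp.\ $\widetilde\zd$) are homotopic with common slope $\zm_f(s)$ (resp.\ $\zm_f(t)$), every term equals $\zi(\widetilde\zg,\widetilde\zd)$. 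But the left side is not simply $c(\zg)c(\zd)\zi(\widetilde\zg,\widetilde\zd)$; rather the relation to extract is that the $d(\zg)d(\zd)$ lift-pairs lying inside a single $(\widetilde\zg,\widetilde\zd)$ pair contribute, between them, exactly $\zi(\widetilde\zg,\widetilde\zd)$ worth of weighted intersections only after one accounts for intersections occurring on the separating curves. The cleaner route, which I would follow, is: the core arc $\widetilde\za$ together with the core arc $\widetilde\zb$ sit inside $f^{-1}(\za\cup\zb)$, and $f$ maps $\widetilde\za\cup\widetilde\zb$ to $\za\cup\zb$ with the $\widetilde\za$-part of degree $d(\zg)$ and the $\widetilde\zb$-part of degree $d(\zd)$; hence the weighted number of intersection points of $\widetilde\za$ with $\widetilde\zb$ is at most $\frac{1}{\deg(f)}\cdot d(\zg)\cdot d(\zd)\cdot\bigl(\text{total weighted }\za\text{-}\zb\text{ intersections over all lifts}\bigr)$ divided appropriately — i.e.\ a single lift of $\za$ meets the full preimage $f^{-1}(\zb)$ in $\zi(\za,\zb)$ points (push forward and use that $f|_{\text{lift}}$ is a bijection onto $\za$), so it meets $\widetilde\zb$ in at most $\frac{d(\zd)}{\deg(f)}\zi(\za,\zb)$ points on average, and $\widetilde\za$ (being $d(\zg)$ such lifts) meets $\widetilde\zb$ in at most $\frac{d(\zg)d(\zd)}{\deg(f)}\zi(\za,\zb)=\frac{d(\zg)d(\zd)}{\deg(f)}\zi(\zg,\zd)$ points, with Lemma~\ref{lemma:torus} guaranteeing that the geodesic representatives realize these counts exactly and that the alternating-length structure makes the ``on average'' into an honest bound for each $\widetilde\za$. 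Finally $\zi(\widetilde\zg,\widetilde\zd)=\zi(\widetilde\za,\widetilde\zb)$ by the definition of the intersection pairing via core arcs, giving the theorem.

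The main obstacle I expect is the bookkeeping in the averaging step: a single lift of $\za$ meets \emph{all} of $f^{-1}(\zb)$ in exactly $\zi(\za,\zb)$ points (counted with the Section~\ref{sec:inttnnums} weights), but these $\zi(\za,\zb)$ points are distributed among the $\deg(f)$ lifts of $\zb$, and I need to know that the sub-arc $\widetilde\zb$ — which is a \emph{specific} union of $d(\zd)$ of these lifts, forming one component of $f^{-1}(\zd)$ — gets no more than its proportional share. This is where the geometry of Lemma~\ref{lemma:torus} is essential: representing everything by geodesics on the half-translation surface, the intersection points of a slope-$\zm_f(s)$ geodesic with a slope-$\zm_f(t)$ geodesic are equidistributed (parts (2) and (3)), so the count on $\widetilde\zg$ is exactly $\frac{\text{length of }\widetilde\zg}{\text{length of }\zg}\cdot d(\zd)\cdot(\text{count per unit length})$, and lengths multiply by $d(\zg)$ under $f$ on $\widetilde\zg$ and by $\deg(f)$ in total. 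Making this equidistribution argument precise — and checking that the core-arc weight conventions of Section~\ref{sec:inttnnums} are compatible with it, including at endpoints lying in $f^{-1}(P_f)$ — is the technical heart of the proof; everything else is the structural description of $f^{-1}(\za\cup\zb\cup\zg)$ already carried out in Section~\ref{sec:corearcs}.
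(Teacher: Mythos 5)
There is a genuine gap at the step you yourself flag as ``the technical heart,'' and it is not a gap that can be closed in the form you propose. Your plan needs the claim that a single lift $\widetilde{\za}_0$ of $\za$, which meets all of $f^{-1}(\zb)$ in $\zi(\za,\zb)$ points, meets the \emph{specific} sub-collection $\widetilde{\zb}$ (a union of $d(\zd)$ of the $\deg(f)$ lifts of $\zb$) in at most its proportional share $\frac{d(\zd)}{\deg(f)}\zi(\za,\zb)$. Lemma~\ref{lemma:torus} does not deliver this: its equidistribution statements are exact only for \emph{closed} geodesics. For geodesic \emph{segments} (lifts of a core arc), the number of intersections with a fixed transversal can differ from lift to lift by boundary effects of one point per segment, so the count on a union of $d(\zd)$ consecutive lifts is only the proportional share up to rounding. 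This is precisely why the paper's genuinely core-arc version of this estimate, Theorem~\ref{thm:nmlip3}, carries a factor of $2$ and a ceiling function (``the previous result is weakened mainly by the introduction of the ceiling function''). Your route therefore proves at best a ceiling-type bound, not the clean inequality of Theorem~\ref{thm:nmlip1}. There is also a secondary problem with the setup: the components of $f^{-1}(\zb)$ adjacent to $\widetilde{\zd}$ are arcs with endpoints in $f^{-1}(P_f)$ (not necessarily in $P_f$) or simple closed curves with $2d(\zd)$ lifts, so they are not literally core arcs for $(S^2,P_f)$, and the weighting conventions of Section~\ref{sec:inttnnums} do not transfer to them directly.

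The paper avoids all of this by never descending to core arcs: it averages over whole components of $f^{-1}(\zg)$ against the whole closed curve $\widetilde{\zd}$. Concretely, after arranging (via the decomposition $f=h\circ g$) that everything is geodesic for the half-translation structure, one has $\left|f^{-1}(\zg)\cap\widetilde{\zd}\right|=d(\zd)\left|\zg\cap\zd\right|=2d(\zd)\zi(\zg,\zd)$ by pushing forward along $\widetilde{\zd}\to\zd$. The components of $f^{-1}(\zg)$ are parallel closed geodesics of a common slope and common length (each maps to $\zg$ with the same degree $d(\zg)$), so by Lemma~\ref{lemma:torus} each meets the closed geodesic $\widetilde{\zd}$ in exactly the same number of points; since there are $\deg(f)/d(\zg)$ of them, the total divides \emph{exactly}, giving $\left|\widetilde{\zg}\cap\widetilde{\zd}\right|=\frac{2d(\zg)d(\zd)}{\deg(f)}\zi(\zg,\zd)\ge 2\zi(\widetilde{\zg},\widetilde{\zd})$. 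If you want to salvage your argument, replace the lift-of-$\za$ averaging by this component-level averaging; the core-arc machinery of Section~\ref{sec:corearcs} is the right tool for Theorems~\ref{thm:nmlip2} and~\ref{thm:nmlip3}, but not for the sharp constant here.
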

  \begin{proof} As in Theorem 2.1 of \cite{cfpp}, we have that
$f=h\circ g$, where $g$ is a Euclidean NET map and $h$ is a
homeomorphism which maps the postcritical set $P_g$ of $g$ to $P_f$.
The map $g$ preserves an affine structure on $S^2$ which is induced by
a half-translation structure as in Lemma~\ref{lemma:torus}.  The
singular points of this half-translation structure are exactly the
postcritical points of $g$.  Let $\zi'$ denote the intersection
pairing of $(S^2,P_g)$.

  We may, and do, choose $\zg$ and $\zd$ so that $h^{-1}(\zg)$ and
$h^{-1}(\zd)$ are geodesics with respect to the half-translation
structure on $(S^2,P_g)$.  Then so are all of the connected components
of $f^{-1}(\zg)=g^{-1}(h^{-1}(\zg))$ and $f^{-1}(\zd)$.  Hence,
letting $|X|$ denote the cardinality of a set $X$,
  \begin{equation*}
\left|\zg\cap \zd\right|=\left|h^{-1}(\zg)\cap h^{-1}(\zd)\right|=
2\zi'(h^{-1}(\zg),h^{-1}(\zd))=2\zi(\zg,\zd).
  \end{equation*}
So
  \begin{equation*}
\left|f^{-1}(\zg)\cap\widetilde{\zd}\right|=
d(\zd)\left|\zg\cap \zd\right|=2d(\zd)\zi(\zg,\zd).
  \end{equation*}

Viewing the connected components of $f^{-1}(\zg)$ and $f^{-1}(\zd)$ as
connected components of $g^{-1}(h^{-1}(\zg))$ and
$g^{-1}(h^{-1}(\zd))$, we see using Lemma~\ref{lemma:torus} that every
connected component of $f^{-1}(\zg)$ meets $\widetilde{\zd}$ in
$|\widetilde{\zg}\cap \widetilde{\zd}|$ points.  Since the connected
components of $f^{-1}(\zg)$ all map to $\zg$ with degree $d(\zg)$, the
number of them is $\deg(f)/d(\zg)$.  Hence
  \begin{equation*}
2\zi(\widetilde{\zg},\widetilde{\zd})\le\left|\widetilde{\zg}\cap
\widetilde{\zd}\right|=\frac{d(\zg)}{\deg(f)}
\left|f^{-1}(\zg)\cap\widetilde{\zd}\right|=
\frac{2d(\zg)d(\zd)}{\deg(f)}\zi(\zg,\zd).
  \end{equation*}

This proves Theorem~\ref{thm:nmlip1}.
 
\end{proof}

\begin{cor}\label{cor:nmlip1}In the situation of
Theorem~\ref{thm:nmlip1},
  \begin{equation*}
\zi(\widetilde{\zg},\widetilde{\zd})\le
\frac{d(\zg)}{c(\zd)}\zi(\zg,\zd).
  \end{equation*}
\end{cor}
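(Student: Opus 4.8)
The plan is to deduce Corollary~\ref{cor:nmlip1} from Theorem~\ref{thm:nmlip1} by relating the quantity $\deg(f)/d(\zd)$ to $c(\zd)$. The starting point is the counting argument already in the proof of Theorem~\ref{thm:nmlip1}: the connected components of $f^{-1}(\zd)$ all map to $\zd$ with degree $d(\zd)$, so there are exactly $\deg(f)/d(\zd)$ of them. Among these, $c(\zd)$ are neither peripheral nor null homotopic, and the remaining $\deg(f)/d(\zd) - c(\zd)$ are peripheral or null homotopic. Hence $c(\zd) \le \deg(f)/d(\zd)$, which rearranges to $d(\zd) \le \deg(f)/c(\zd)$.

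With this inequality in hand, the corollary is immediate: starting from Theorem~\ref{thm:nmlip1},
  \begin{equation*}
\zi(\widetilde{\zg},\widetilde{\zd}) \le
\frac{d(\zg)d(\zd)}{\deg(f)}\zi(\zg,\zd)
= \frac{d(\zg)}{\deg(f)}\, d(\zd)\, \zi(\zg,\zd)
\le \frac{d(\zg)}{\deg(f)}\cdot \frac{\deg(f)}{c(\zd)}\,\zi(\zg,\zd)
= \frac{d(\zg)}{c(\zd)}\zi(\zg,\zd).
  \end{equation*}
This is exactly the claimed bound, so nothing further is needed.

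The only point requiring a word of care — and the one I would flag as the single real step — is the bound $c(\zd) \le \deg(f)/d(\zd)$, i.e. that $c(\zd) \ge 1$ so that the ratio $d(\zg)/c(\zd)$ is well defined and the comparison goes through. Since $\widetilde{\zd}$ is by hypothesis a connected component of $f^{-1}(\zd)$ which is neither peripheral nor null homotopic, we have $c(\zd) \ge 1$, so $c(\zd)$ is a positive integer bounded above by the total number $\deg(f)/d(\zd)$ of connected components of $f^{-1}(\zd)$. That is all the content there is; the rest is the one-line chain of inequalities above.
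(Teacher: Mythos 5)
Your proposal is correct and is essentially the paper's proof: the corollary follows from Theorem~\ref{thm:nmlip1} together with the inequality $c(\zd)d(\zd)\le \deg(f)$, which you justify by the same counting of the $\deg(f)/d(\zd)$ connected components of $f^{-1}(\zd)$. Nothing further is needed.
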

  \begin{proof} This follows from Theorem~\ref{thm:nmlip1} and the
fact that $c(\zd)d(\zd)\le \deg(f)$.

\end{proof}

\begin{remark}\label{remark:nmlip1}  It is possible to recover
the inequality in Theorem~\ref{thm:lipschitz} for NET maps from
Corollary~\ref{cor:nmlip1}.  Indeed, Corollary~\ref{cor:nmlip1}
implies that
  \begin{equation*}
\zi(\widetilde{\zg},\widetilde{\zd})\le
\frac{d(\zg)}{c(\zd)}\zi(\zg,\zd) \text{ and }
\zi(\widetilde{\zg},\widetilde{\zd})\le
\frac{d(\zd)}{c(\zg)}\zi(\zg,\zd).
  \end{equation*}
The inequality in Theorem~\ref{thm:lipschitz} for NET maps results
from this and the fact that if two positive real numbers are both
greater than another, then so is their geometric mean.  So
Corollary~\ref{cor:nmlip1} is stronger than Theorem~\ref{thm:lipschitz}
for NET maps.
\end{remark}

We next give an extension of Corollary~\ref{cor:nmlip1} to the case in
which one of the simple closed curves is replaced by a core arc.  This
will be used in the proof of Theorem~\ref{thm:omit1}.

\begin{thm}\label{thm:nmlip2}Let $\za$ be a core arc for
$(S^2,P_f)$, and let $\zd$ be a simple closed curve in $S^2\setminus
P_f$ which is neither peripheral nor null homotopic.  Let
$\widetilde{\za}$ be a union of core arcs in $f^{-1}(\za)$, and let
$\widetilde{\zd}$ be a connected component of $f^{-1}(\zd)$ which is
neither peripheral nor null homotopic.  Let $d(\widetilde{\za})$ be
the number of lifts of $\za$ in $\widetilde{\za}$.  Then
  \begin{equation*}
\zi(\widetilde{\za},\widetilde{\zd})\le
\frac{d(\widetilde{\za})}{c(\zd)}\zi(\za,\zd).
  \end{equation*}
\end{thm}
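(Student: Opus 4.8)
The plan is to adapt the counting argument from the proof of Theorem~\ref{thm:nmlip1}, working in the half-translation structure on $(S^2,P_g)$ coming from the factorization $f=h\circ g$ of Theorem 2.1 of \cite{cfpp}. First I would choose representatives $\za$ and $\zd$ so that $h^{-1}(\za)$ and $h^{-1}(\zd)$ are geodesic; then every connected component of $f^{-1}(\za)=g^{-1}(h^{-1}(\za))$ and of $f^{-1}(\zd)$ is geodesic as well. The key preliminary computation is the cardinality of $\za\cap \zd$: since one of the two objects is a core arc and the other a simple closed curve, the intersection-number conventions of Section~\ref{sec:inttnnums} give $\left|\za\cap \zd\right|=\zi(\za,\zd)$ rather than $2\zi(\za,\zd)$. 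Pulling back by $g$ and using that every connected component of $f^{-1}(\zd)$ maps to $\zd$ with degree $d(\zd)$, one gets $\left|f^{-1}(\za)\cap \widetilde{\zd}\right| = d(\zd)\left|\za\cap \zd\right| = d(\zd)\zi(\za,\zd)$.

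The next step is to understand how the lifts of $\za$ inside $\widetilde{\za}$ distribute their intersections with $\widetilde{\zd}$. Here I would invoke Lemma~\ref{lemma:torus}: viewing $f^{-1}(\za)$ and $\widetilde{\zd}$ as geodesic objects pulled back from $T^2$, each lift of $\za$ meets $\widetilde{\zd}$ in the same number of points as every other lift of $\za$ does (the lifts all having the same slope, namely $\zm_f$ applied to the slope of $\za$, up to the core-arc ambiguity), and more to the point each lift of $\za$ meets $\widetilde{\zd}$ in exactly $|\widetilde{\za}\cap\widetilde{\zd}|/d(\widetilde{\za})$ points once we restrict to the $d(\widetilde{\za})$ lifts comprising $\widetilde{\za}$. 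Since the total number of lifts of $\za$ in $f^{-1}(\za)$ is $\deg(f)$, this yields
\begin{equation*}
\left|\widetilde{\za}\cap\widetilde{\zd}\right| = \frac{d(\widetilde{\za})}{\deg(f)}\left|f^{-1}(\za)\cap\widetilde{\zd}\right| = \frac{d(\widetilde{\za})\,d(\zd)}{\deg(f)}\,\zi(\za,\zd).
\end{equation*}
Then $\zi(\widetilde{\za},\widetilde{\zd})\le\left|\widetilde{\za}\cap\widetilde{\zd}\right|$ — using the multiplicity-$1$ convention for a core-arc-versus-curve intersection, so there is no factor of $2$ to cancel on either side — combined with $c(\zd)\,d(\zd)\le\deg(f)$ (exactly as in Corollary~\ref{cor:nmlip1}), gives the claimed bound $\zi(\widetilde{\za},\widetilde{\zd})\le \frac{d(\widetilde{\za})}{c(\zd)}\zi(\za,\zd)$.

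The main obstacle I anticipate is bookkeeping the intersection-multiplicity conventions correctly at every stage: the core-arc/curve pairing counts points with multiplicity $1$ (Section~\ref{sec:inttnnums}), whereas the curve/curve pairing in Theorem~\ref{thm:nmlip1} carried a factor of $2$, so the factor-of-$2$ cancellations that appeared there are simply absent here, and one must be careful not to reintroduce or drop them. A secondary subtlety is the endpoint behavior: lifts of $\za$ have endpoints in $f^{-1}(P_f)$, and one should check that $\widetilde{\za}$ and $\widetilde{\zd}$ can be taken to meet only transversally in interior points (no intersection at a postcritical point of $g$), which is where the freedom to perturb $\za$ and $\zd$ within their homotopy classes — keeping them geodesic with respect to the half-translation structure — is used, just as in the proof of Theorem~\ref{thm:nmlip1}.
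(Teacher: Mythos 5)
There is a genuine gap at the central step. Your displayed equality
\begin{equation*}
\left|\widetilde{\za}\cap\widetilde{\zd}\right| = \frac{d(\widetilde{\za})}{\deg(f)}\left|f^{-1}(\za)\cap\widetilde{\zd}\right|
\end{equation*}
asserts that the points of $f^{-1}(\za)\cap\widetilde{\zd}$ are equidistributed among the $\deg(f)$ lifts of $\za$, and this is false in general. Lemma~\ref{lemma:torus}(3) only says that $\widetilde{\zd}$ cuts a closed-curve component of $f^{-1}(\za)$ into segments whose lengths \emph{alternate between two possibly unequal values}; when those two values are unequal, consecutive equal-length lifts of $\za$ can capture different numbers of intersection points (one lift may contain two nearby intersection points while its neighbor contains none). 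This is precisely the phenomenon that forces the ceiling function and the factor of $2$ in Theorem~\ref{thm:nmlip3}: your intermediate bound $\zi(\widetilde{\za},\widetilde{\zd})\le \frac{d(\widetilde{\za})d(\zd)}{\deg(f)}\zi(\za,\zd)$ is strictly stronger than what Theorem~\ref{thm:nmlip3} claims, which should have been a warning sign. Neither the equality nor the corresponding inequality $\le$ can be salvaged by perturbing representatives, since the defect is in the combinatorics of how endpoints of the lifts sit relative to $\widetilde{\za}'\cap\widetilde{\zd}$.

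The correct argument counts in the opposite direction and needs no geodesics at all. Choose $\za$ and $\zd$ so that $\zi(\za,\zd)=|\za\cap\zd|$. Each of the $d(\widetilde{\za})$ lifts of $\za$ in $\widetilde{\za}$ maps homeomorphically onto $\za$, so its intersection with the \emph{entire} preimage $f^{-1}(\zd)$ is in bijection with $\za\cap\zd$; hence $|\widetilde{\za}\cap f^{-1}(\zd)|=d(\widetilde{\za})\,\zi(\za,\zd)$ exactly — no equidistribution among lifts of $\za$ is needed because you keep all of $f^{-1}(\zd)$, not just one component. Now $f^{-1}(\zd)$ contains $c(\zd)$ components homotopic to $\widetilde{\zd}$, and each of these meets $\widetilde{\za}$ in at least $\zi(\widetilde{\za},\widetilde{\zd})$ points, giving $c(\zd)\,\zi(\widetilde{\za},\widetilde{\zd})\le d(\widetilde{\za})\,\zi(\za,\zd)$. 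Note that the roles of exact count and lower bound are swapped relative to your plan: the exact count is over all of $f^{-1}(\zd)$ against $\widetilde{\za}$, and the inequality comes from discarding the inessential components of $f^{-1}(\zd)$ and from minimal position of $\widetilde{\za}$ against each essential one.
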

  \begin{proof} We may, and do, assume that
$\zi(\za,\zd)=\left|\za\cap \zd\right|$.  So
  \begin{equation*}
\left|\widetilde{\za}\cap
f^{-1}(\zd)\right|=d(\widetilde{\za})\zi(\za,\zd).
  \end{equation*}
The set $f^{-1}(\zd)$ contains $c(\zd)$ connected components homotopic
to $\widetilde{\zd}$.  These curves each meet $\widetilde{\za}$ in no
fewer than $\zi(\widetilde{\za},\widetilde{\zd})$ points.  Thus
  \begin{equation*}
c(\zd)\zi(\widetilde{\za},\widetilde{\zd})\le 
d(\widetilde{\za})\zi(\za,\zd).
  \end{equation*}
This proves Theorem~\ref{thm:nmlip2}.

\end{proof}

Just as Theorem~\ref{thm:nmlip2} extends Corollary~\ref{cor:nmlip1},
the next theorem shows that if we replace one of the simple closed
curves in Theorem~\ref{thm:nmlip1} by a core arc, then we obtain
almost the same result.  The previous result is weakened mainly by the
introduction of the ceiling function.  Theorem~\ref{thm:nmlip3} will
be used in the proof of Theorem~\ref{thm:matings}.

\begin{thm}\label{thm:nmlip3}Let $\za$ be a core arc for
$(S^2,P_f)$, and let $\zd$ be a simple closed curve in $S^2\setminus
P_f$ which is neither peripheral nor null homotopic.  Let
$\widetilde{\za}$ be a connected union of core arcs in $f^{-1}(\za)$.
Let $d(\widetilde{\za})$ be the number of lifts of $\za$ in
$\widetilde{\za}$.  Let $\widetilde{\zd}$ be a connected component of
$f^{-1}(\zd)$ which is neither peripheral nor null homotopic.  Then
  \begin{equation*}
\zi(\widetilde{\za},\widetilde{\zd})\le 2\left\lceil
\frac{d(\widetilde{\za})d(\zd)}{2\deg(f)}\zi(\za,\zd)\right\rceil.
  \end{equation*}
\end{thm}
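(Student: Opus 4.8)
The plan is to follow the proof of Theorem~\ref{thm:nmlip1} as closely as possible, tracking how the presence of a core arc rather than a simple closed curve affects the counting. As before, write $f = h \circ g$ with $g$ a Euclidean NET map and $h$ a homeomorphism, so that $(S^2, P_g)$ carries a half-translation structure whose singular points are the postcritical points of $g$. Choose $\za$ and $\zd$ in their homotopy classes so that $h^{-1}(\za)$ and $h^{-1}(\zd)$ are geodesics for this structure and meet minimally; then the connected components of $f^{-1}(\za) = g^{-1}(h^{-1}(\za))$ and of $f^{-1}(\zd)$ are geodesics as well. Since $h^{-1}(\za)$ is a geodesic core arc with endpoints at two of the four singular points, its inverse image under $g$ behaves, away from those branch points, like a union of parallel closed geodesics on the torus double cover; this is where Lemma~\ref{lemma:torus} does its work.

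First I would count $|\widetilde{\za} \cap f^{-1}(\zd)|$. Each lift of $\za$ in $\widetilde{\za}$ maps homeomorphically onto $\za$, and $f^{-1}(\zd)$ pulls back $\zd$ with total degree $\deg(f)$ but meets $\za$ (equivalently $\widetilde{\za}$ maps onto $\za$ which meets $\zd$) a controlled number of times. Using that $h^{-1}(\za)$ meets $h^{-1}(\zd)$ in exactly $2\zi'(h^{-1}(\za),h^{-1}(\zd)) = 2\zi(\za,\zd)$ points with the core-arc multiplicity conventions of Section~\ref{sec:inttnnums} — or rather, by working on the torus double cover and using statement~(2) of Lemma~\ref{lemma:torus} — one obtains $|\za \cap f^{-1}(\zd)| = d(\zd)\,|\za \cap \zd|$, hence $|\widetilde{\za} \cap f^{-1}(\zd)| = d(\widetilde{\za})\, d(\zd)\, \zi(\za,\zd)$ after accounting for the multiplicity-$2$ convention for distinct core arcs (which is exactly what produces the factor $2$ discrepancy with the pure-curve case). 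The next step is to distribute these intersection points among the $\deg(f)/d(\zd)$ connected components of $f^{-1}(\zd)$, all homotopic to $\widetilde{\zd}$, and conclude via Lemma~\ref{lemma:torus}(3) that each such component meets $\widetilde{\za}$ in essentially the same number of points — up to the alternation of segment lengths, which forces the count for an individual component to differ from the average by at most one, whence the ceiling.

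The main obstacle will be bookkeeping the multiplicity conventions for intersections involving core arcs (multiplicity $1$ at endpoints, $2$ elsewhere for two distinct core arcs; multiplicity $1$ for a core arc against a simple closed curve) consistently between the source and the half-translation picture, and making precise the claim that the geodesic representatives of the $\deg(f)/d(\zd)$ components of $f^{-1}(\zd)$ split the intersection points with $\widetilde{\za}$ as evenly as the $2\zi(\za,\zd)$-fold alternating partition of Lemma~\ref{lemma:torus}(3) permits. The factor $2$ inside and outside the ceiling, and the placement of $d(\widetilde{\za})$ rather than $d(\za)$, should fall out once the core-arc version of the counting identity $|\widetilde{\za}\cap\widetilde{\zd}| \ge 2\zi(\widetilde{\za},\widetilde{\zd})$ is set up correctly; I expect the inequality rather than equality there (and the ceiling) to be the only place slack is introduced. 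The remaining manipulations are the same division-and-substitution bookkeeping as in Theorem~\ref{thm:nmlip1}, now with a ceiling wrapped around the resulting bound.
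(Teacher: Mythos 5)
Your framework (geodesic representatives for the half-translation structure, an exact intersection count, then a proportionality argument producing a ceiling) is the right one, but the step you yourself flag as ``the main obstacle'' is the entire content of the theorem, and the route you propose for it does not go through. You fix $\widetilde{\za}$ and try to distribute $\widetilde{\za}\cap f^{-1}(\zd)$ evenly over the $\deg(f)/d(\zd)$ components of $f^{-1}(\zd)$, appealing to Lemma~\ref{lemma:torus}(3); but that lemma concerns two \emph{closed} geodesics and says nothing about how a proper sub-arc such as $\widetilde{\za}$ splits its intersection points among several parallel components, so the claim that each component differs from the average by at most one is unsupported (and ``average plus one'' would not in any case reproduce the bound $2\left\lceil\cdot/2\right\rceil$, which can equal the average itself when that average is an even integer). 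Your intermediate count is also off: each of the $d(\widetilde{\za})$ lifts in $\widetilde{\za}$ maps homeomorphically onto $\za$, so $\left|\widetilde{\za}\cap f^{-1}(\zd)\right|=d(\widetilde{\za})\left|\za\cap\zd\right|$ with no factor of $d(\zd)$ (compare the proof of Theorem~\ref{thm:nmlip2}); dividing your count by the number of components therefore gives the wrong constant.

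The paper organizes the count the other way around. It fixes the single component $\widetilde{\zd}$ and enlarges $\widetilde{\za}$ to the full connected component $\widetilde{\za}'$ of $f^{-1}(\za)$ containing it, which by the analysis in Section~\ref{sec:corearcs} is (in the main case) a closed curve consisting of $2d(\zg)$ lifts of $\za$, where $\zg$ is a simple closed curve with core arc $\za$. Then $\left|\widetilde{\za}'\cap\widetilde{\zd}\right|=\frac{2d(\zg)d(\zd)}{\deg(f)}\zi(\za,\zd)$ exactly as in Theorem~\ref{thm:nmlip1}, and Lemma~\ref{lemma:torus}(3) splits these points into two interlacing families $A$ and $B$, each equally spaced along $\widetilde{\za}'$. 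Since $\widetilde{\za}$ is a union of $d(\widetilde{\za})$ consecutive equal-length lifts out of the $2d(\zg)$, each family contributes at most $\left\lceil\frac{d(\widetilde{\za})}{2d(\zg)}\left|A\right|\right\rceil$ points to $\widetilde{\za}$; adding the two ceilings is what produces both the factor $2$ outside the ceiling and the $2$ in the denominator. The passage to $\widetilde{\za}'$ and this two-family decomposition are the missing ideas, and they cannot be replaced by the averaging over components of $f^{-1}(\zd)$ that you sketch.
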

  \begin{proof} Let $\widetilde{\za}'$ be the connected component of
$f^{-1}(\za)$ which contains $\widetilde{\za}$.  Suppose that
$\widetilde{\za}'$ is a simple closed curve.  The case in which
$\widetilde{\za}'$ is an arc will be left to the reader.  Let $\zg$ be
a simple closed curve in $S^2\setminus P_f$ with core arc $\za$.  We
argue as in the proof of Theorem~\ref{thm:nmlip1}, reducing to the
case in which $\widetilde{\za}'$ and $\widetilde{\zd}$ are geodesics
with respect to a half-translation structure.  As in the proof of
Theorem~\ref{thm:nmlip1}, we have that
  \begin{equation*}
\left|\widetilde{\za}'\cap \widetilde{\zd}\right|=
\frac{2d(\zg)d(\zd)}{\deg(f)}\zi(\zg,\zd)=
\frac{2d(\zg)d(\zd)}{\deg(f)}\zi(\za,\zd).
  \end{equation*}
Lemma~\ref{lemma:torus} implies that the points in
$\widetilde{\za}'\cap \widetilde{\zd}$ can be partitioned into two
interlacing subsets $A$ and $B$ such that each partitions
$\widetilde{\za}'$ into segments of equal lengths.  Since
$\widetilde{\za}$ contains $d(\widetilde{\za})$ consecutive lifts of
the $2d(\zg)$ lifts of $\za$ in $\widetilde{\za}'$ and these have
equal lengths,
  \begin{equation*}
\left|\widetilde{\za}\cap A\right|\le \left\lceil
\frac{d(\widetilde{\za})}{2d(\zg)}\left|A\right|\right\rceil
\text{ and }
\left|\widetilde{\za}\cap B\right|\le \left\lceil
\frac{d(\widetilde{\za})}{2d(\zg)}\left|B\right|\right\rceil.
  \end{equation*}
Hence
  \begin{equation*}
\zi(\widetilde{\za},\widetilde{\zd})\le
\left|\widetilde{\za}\cap \widetilde{\zd}\right|=
\left|\widetilde{\za}\cap A\right|+\left|\widetilde{\za}\cap
B\right|\le 
2\left\lceil
\frac{d(\widetilde{\za})d(\zd)}{2\deg(f)}\zi(\za,\zd)\right\rceil .
  \end{equation*}
This yields Theorem~\ref{thm:nmlip3} if $\widetilde{\za}'$ is a simple
closed curve.  Essentially the same argument proves
Theorem~\ref{thm:nmlip3} when $\widetilde{\za}'$ is an arc.

\end{proof}

\section{The half-space theorem}\label{sec:halfsp}\nosubsections

In this section we reprove part of the half-space theorem
\cite[Theorem 6.7]{cfpp}, we strengthen it in the case of NET maps and
we discuss some of its limitations.

We return to the setting of a Thurston map $f$ whose postcritical set
$P_f$ contains exactly four points.  Let $\zm_f$ be the slope function
of $f$ with respect to some marking (Section~\ref{sec:slopefns}) of
$S^2\setminus P_f$.  Let $s$ be a slope.  If $\zm_f(s)\notin \{s,\odot
\}$, then the half-space theorem provides an explicit open interval in
$\partial \bbH$, called an excluded interval, containing $-1/s$ which
does not contain the negative reciprocal of an obstruction for $f$.
Theorem~\ref{thm:halfsp} shows that the following theorem in effect
generalizes this result.

\begin{thm}\label{thm:fixedhalfsp}Let $f$ be a Thurston
map with exactly four postcritical points.  Let $s=\frac{p}{q}$ be a
slope in reduced form, and suppose that $\zm_f(s)=s'=\frac{p'}{q'}$,
also an extended rational number in reduced form.  Let $\zr$ be the
multiplier for $s$.  Let $\zr_0$ be a positive real number.  Then the
interval in $\partial \bbH$ defined by 
  \begin{equation*}
\left|px+q\right|^2<\zr\zr_0\left|p'x+q'\right|^2
  \end{equation*}
does not contain the negative reciprocal of a fixed point of $\zm_f$
with multiplier at least $\zr_0$.
\end{thm}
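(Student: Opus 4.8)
The plan is to convert the displayed boundary inequality into a comparison of intersection numbers and then read the conclusion off Theorem~\ref{thm:lipschitz}.  Write $s=\frac{p}{q}$, $s'=\zm_f(s)=\frac{p'}{q'}$, let $\zr$ be the multiplier of $s$, and let $t=\frac{a}{b}$ (reduced) be an arbitrary slope fixed by $\zm_f$ with multiplier $\zr_t\ge\zr_0$; the goal is to show $-1/t$ fails $|px+q|^2<\zr\zr_0|p'x+q'|^2$.  The first step I would carry out is an elementary substitution: for any slope $u=\frac{\za}{\zb}$ in reduced form, evaluating $|\za x+\zb|^2$ at $x=-1/t$ gives something proportional to $\zi(u,t)^2$, with a constant of proportionality depending only on $t$.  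Concretely, if $a\ne 0$ then $-1/t=-\frac{b}{a}$ and
\[
\bigl|\za\cdot(-b/a)+\zb\bigr|^{2}=\frac{(\za b-\zb a)^{2}}{a^{2}}=\frac{\zi(u,t)^{2}}{a^{2}},
\]
while if $a=0$ (so $t=0$, $-1/t=\infty$) membership of $\infty$ in the interval is governed by leading coefficients, with $\za^{2}=\zi(u,t)^{2}$ in the analogous role.  Taking $u=s$ and $u=s'$ and cancelling the common factor, I obtain that $-1/t$ lies in the interval if and only if $\zi(s,t)^{2}<\zr\zr_0\,\zi(s',t)^{2}$.

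Granting this, the argument finishes quickly.  Since $t$ is fixed, $\zm_f(t)=t\ne\odot$, and by hypothesis $\zm_f(s)=s'\ne\odot$, so Theorem~\ref{thm:lipschitz} applies to the pair $s,t$ with multipliers $\zr,\zr_t$ and gives
\[
\zi(s',t)=\zi\bigl(\zm_f(s),\zm_f(t)\bigr)\le\frac{1}{\sqrt{\zr\zr_t}}\,\zi(s,t).
\]
Both $\zr$ and $\zr_t$ are positive, each being a sum of reciprocals of degrees over a nonempty family of curves (nonempty precisely because the relevant slope function values are not $\odot$), so this rearranges to $\sqrt{\zr\zr_t}\,\zi(s',t)\le\zi(s,t)$, and since $\zr_t\ge\zr_0>0$ it follows that $\sqrt{\zr\zr_0}\,\zi(s',t)\le\zi(s,t)$, i.e.\ $\zr\zr_0\,\zi(s',t)^{2}\le\zi(s,t)^{2}$.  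That is the negation of the membership criterion from the first paragraph, so $-1/t$ is not in the interval.

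The step I expect to demand the most care is the coordinate translation, specifically the degenerate configurations.  If $t=s$ then $p\cdot(-1/t)+q=0$; but in that case $\zm_f(t)=t$ forces $s'=s$, so $p'\cdot(-1/t)+q'=0$ too and $-1/t$ fails the strict inequality at once.  Similarly, if $s'=t$ then $\zi(s',t)=0$ and $-1/t$ trivially fails the inequality, and the boundary slopes $t=0$, $s=\infty$, $s'=\infty$ are absorbed by the leading-coefficient form of the substitution.  None of this is deep, but it must be written out.  As an alternative to quoting Theorem~\ref{thm:lipschitz}, one can repeat the horoball argument from its first proof in this special case: when $s\ne t$ and $s'\ne t$, pick $m_{1},m_{2}>0$ with $B_{m_{1}}(s)$ and $B_{m_{2}}(t)$ tangent, note that $\zs_f$ carries the point of tangency into $\overline{B_{\zr m_{1}}(s')}$ and into $\overline{B_{\zr_t m_{2}}(t)}\subseteq\overline{B_{\zr_0 m_{2}}(t)}$ (a larger horoball parameter giving a smaller horoball), so these closures meet, whence Lemma~\ref{lemma:tangent} and $m_{1}m_{2}=\zi(s,t)^{-2}$ give $\zi(s',t)\le\frac{1}{\sqrt{\zr\zr_0\,m_{1}m_{2}}}=\frac{1}{\sqrt{\zr\zr_0}}\,\zi(s,t)$, the same bound.
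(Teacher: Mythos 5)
Your proposal is correct and follows essentially the same route as the paper's own proof: apply Theorem~\ref{thm:lipschitz} to the pair $(s,t)$ where $t$ is the hypothetical fixed point, use $\zr_t\ge\zr_0$ to replace the fixed point's multiplier, and translate the resulting intersection-number inequality into the coordinate form $\zr\zr_0|p'x+q'|^2\le|px+q|^2$ at $x=-1/t$. Your extra care with the degenerate cases ($t=s$, $t=s'$, slopes $0$ and $\infty$) is the content the paper compresses into ``this easily proves'' the theorem.
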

  \begin{proof} Suppose that $s_2=\frac{p_2}{q_2}$ is a fixed point of
$\zm_f$ with multiplier $\zr_2$.  Theorem~\ref{thm:lipschitz} with
$s_1=s$ implies that
  \begin{equation*}
\sqrt{\zr \zr_2}\zi(\tfrac{p'}{q'},\tfrac{p_2}{q_2})\le
\zi(\tfrac{p}{q},\tfrac{p_2}{q_2}).
  \end{equation*}
Hence
  \begin{equation*}
\zr\zr_2\left|p'q_2-q'p_2\right|^2\le \left|pq_2-qp_2\right|^2
  \end{equation*}
and
  \begin{equation*}
\zr\zr_2\left|p'x+q'\right|^2\le \left|px+q\right|^2,
  \end{equation*}
where $x=-\frac{q_2}{p_2}$.  This easily proves
Theorem~\ref{thm:fixedhalfsp}. 
\end{proof}

\begin{cor}\label{cor:halfsp}Under the assumptions of
Theorem~\ref{thm:fixedhalfsp}, the interval in $\partial \bbH$
defined by
  \begin{equation*}
\left|px+q\right|^2<\zr\left|p'x+q'\right|^2
  \end{equation*}
does not contain the negative reciprocal of the slope of an
obstruction for $f$.
\end{cor}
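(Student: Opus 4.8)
The plan is to deduce this from Theorem~\ref{thm:fixedhalfsp} by taking the threshold $\zr_0$ to be $1$. The key observation is that an obstruction for $f$ (in the relevant sense) is an $f$-stable multicurve whose Thurston multiplier is at least $1$; since $P_f$ has exactly four points, every $f$-stable multicurve consists of a single simple closed curve, so an obstruction is precisely a slope $s_2$ that is fixed by $\zm_f$ and whose multiplier $\zr_2$ satisfies $\zr_2 \ge 1$. Thus the negative reciprocal of the slope of an obstruction is exactly the negative reciprocal of a fixed point of $\zm_f$ with multiplier at least $\zr_0 = 1$.

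First I would recall, from the discussion preceding Theorem~\ref{thm:lipschitz}, that when a slope $s_2$ is fixed by $\zm_f$ the curve of slope $s_2$ forms an $f$-stable multicurve with $1\times 1$ Thurston matrix $[\zr_2]$, whose spectral radius (the Thurston multiplier) is just $\zr_2$; and that W.~Thurston's characterization identifies obstructions with such multicurves of multiplier $\ge 1$. Then I would invoke Theorem~\ref{thm:fixedhalfsp} with $\zr_0 = 1$: the interval defined by $\left|px+q\right|^2 < \zr\left|p'x+q'\right|^2$ contains no negative reciprocal of a fixed point of $\zm_f$ with multiplier at least $1$. Combining these two facts gives the corollary immediately.

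I expect no serious obstacle here; the only point requiring a little care is the bookkeeping that identifies ``obstruction'' with ``$\zm_f$-fixed slope of multiplier $\ge 1$'' in the four-postcritical-point setting, and that the relevant multiplier in Theorem~\ref{thm:fixedhalfsp} is the same quantity $\zr_2$ that appears as the Thurston multiplier. Both are consequences of the remarks already made in Section~\ref{sec:genllip}, so the argument is a one-line specialization of the theorem.
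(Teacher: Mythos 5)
Your proposal is correct and is exactly the argument the paper intends: the corollary is the specialization $\zr_0=1$ of Theorem~\ref{thm:fixedhalfsp}, using the fact (already noted before Theorem~\ref{thm:lipschitz}) that with four postcritical points an obstruction is precisely an $f$-stable single curve, i.e.\ a $\zm_f$-fixed slope with multiplier at least $1$. Nothing further is needed.
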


\begin{cor}\label{cor:fixedpt}Under the assumptions of
Theorem~\ref{thm:fixedhalfsp}, the interval in $\partial \bbH$
defined by
  \begin{equation*}
\deg(f)\left|px+q\right|<\left|p'x+q'\right|
  \end{equation*}
does not contain the negative reciprocal of a fixed point of $\zm_f$.
\end{cor}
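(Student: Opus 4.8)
The plan is to obtain Corollary~\ref{cor:fixedpt} from Theorem~\ref{thm:fixedhalfsp} by choosing the free parameter $\zr_0$ appropriately and then using a crude but universally valid lower bound for multipliers of fixed points. First I would recall that if $s_2$ is any slope with $\zm_f(s_2)$ a slope (not $\odot$), then its multiplier $\zr_2 = \sum_{i=1}^k d_i^{-1}$, where each $d_i$ is the degree with which a nonperipheral, non-null-homotopic component of $f^{-1}(\zg)$ maps to $\zg$; since there is at least one such component when $\zm_f(s_2)\ne\odot$, and each $d_i\le\deg(f)$, we get $\zr_2\ge 1/\deg(f)$. In particular every fixed point of $\zm_f$ has multiplier at least $1/\deg(f)$.

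Next I would apply Theorem~\ref{thm:fixedhalfsp} with $\zr_0 = 1/\deg(f)$. That theorem then says the interval defined by $|px+q|^2 < \zr\zr_0|p'x+q'|^2 = (\zr/\deg(f))|p'x+q'|^2$ contains no negative reciprocal of a fixed point of $\zm_f$ with multiplier at least $1/\deg(f)$ — but by the previous paragraph that is \emph{every} fixed point. So the interval $\{x : \deg(f)|px+q|^2 < \zr|p'x+q'|^2\}$ contains no negative reciprocal of a fixed point of $\zm_f$.

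Finally I would reconcile this with the stated inequality $\deg(f)|px+q| < |p'x+q'|$. Here one uses that $\zr\le 1$: indeed $s$ is a fixed point of $\zm_f$ (since $\zm_f(s)=s'$ and the corollary concerns fixed points — wait, in the corollary $s$ need only satisfy $\zm_f(s)=s'$), so I should instead note simply that the multiplier $\zr$ of any slope satisfies $\zr\le\deg(f)\cdot\tfrac1{1}$... more carefully, $\zr = \sum d_i^{-1}\le k\le$ (number of components) and since distinct nonperipheral components are disjoint and homotopic there can be at most $\deg(f)$ of them, but a sharper and cleaner bound is that $\zr\le 1$ whenever $s'=\zm_f(s)$ is a slope arising from a stable or near-stable situation — however the safest route is: the displayed conclusion $\deg(f)|px+q|<|p'x+q'|$ defines a subinterval of $\{x:\deg(f)|px+q|^2<|p'x+q'|^2\}$, and since $\zr\ge$ ... hmm. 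The clean statement is that $\deg(f)\,|px+q| < |p'x+q'|$ implies $\deg(f)\,|px+q|^2 < |p'x+q'|^2 \le \deg(f)|p'x+q'|^2$, which does not immediately match. The intended argument is that $\zr\le 1$ is false in general, so instead one squares: $\deg(f)|px+q|<|p'x+q'|$ gives $\deg(f)^2|px+q|^2<|p'x+q'|^2$, hence $\deg(f)|px+q|^2 < |p'x+q'|^2/\deg(f) \le \zr|p'x+q'|^2$ provided $\zr\ge 1/\deg(f)$, which holds for the multiplier of $s$ as above. So the interval of Corollary~\ref{cor:fixedpt} is contained in the interval $\{x:\deg(f)|px+q|^2<\zr|p'x+q'|^2\}$, and we conclude.

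The main obstacle will be the bookkeeping around which multiplier lower bound to invoke and for which slope: one needs $\zr\ge 1/\deg(f)$ for the slope $s$ appearing in the hypothesis (to pass from the linear inequality to the quadratic one after applying Theorem~\ref{thm:fixedhalfsp} with $\zr_0=1/\deg(f)$), \emph{and} $\zr_2\ge 1/\deg(f)$ for an arbitrary fixed point $s_2$ (so that Theorem~\ref{thm:fixedhalfsp} with that $\zr_0$ excludes all fixed points, not just high-multiplier ones). Both follow from the same elementary observation that a multiplier is a nonempty sum of reciprocals of degrees each at most $\deg(f)$, so once that is stated cleanly the rest is a one-line chain of inequalities.
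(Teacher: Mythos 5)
Your final argument is correct and is evidently the intended derivation: every fixed point of $\zm_f$ has multiplier at least $1/\deg(f)$ (a nonempty sum of reciprocals of degrees, each degree at most $\deg(f)$), so Theorem~\ref{thm:fixedhalfsp} applies with $\zr_0=1/\deg(f)$, and since the multiplier $\zr$ of $s$ itself also satisfies $\zr\ge 1/\deg(f)$, squaring the corollary's inequality shows its interval is contained in the theorem's. The false starts in your third paragraph should be deleted, but the inequality chain you finally settle on is exactly right.
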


We refer to the intervals which appear in
Theorem~\ref{thm:fixedhalfsp}, Corollary~\ref{cor:halfsp} and
Corollary~\ref{cor:fixedpt} as excluded intervals.  The following
theorem shows that the excluded interval in Corollary~\ref{cor:halfsp}
equals the excluded interval in Theorem 6.7 of \cite{cfpp}, thus
providing a somewhat different interpretation of this interval.

\begin{thm}\label{thm:halfsp}If $\frac{p'}{q'}\ne \frac{p}{q}$, then
the excluded interval interval in Corollary~\ref{cor:halfsp} equals
the excluded interval in Theorem 6.7 of \cite{cfpp}.
\end{thm}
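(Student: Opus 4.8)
The plan is to write down explicitly the excluded interval from Theorem 6.7 of \cite{cfpp} and show it coincides, as a subset of $\partial\bbH$, with the locus $\left|px+q\right|^2<\zr\left|p'x+q'\right|^2$. Recall from the discussion near the start of Section~\ref{sec:genllip} that for a slope $\frac{p}{q}$ in reduced form and $m>0$, the set $B_m(\frac{p}{q})=\{z\in\bbH:\mathrm{Im}(z)/\left|pz+q\right|^2>m\}$ is the horoball tangent to $\partial\bbH$ at $-q/p$ with Euclidean diameter $\frac{1}{mp^2}$. The half-space theorem of \cite{cfpp} is obtained by observing that $\zs_f$ maps $B_m(s)$ into $B_{\zr m}(s')$; choosing $m$ so that $B_m(s)$ is the \emph{largest} horoball at $-1/s$ disjoint from (or rather, the relevant critical-size horoball at) $-1/s'$ and reading off the shadow on $\partial\bbH$ of the complement gives the excluded interval. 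So first I would locate the precise normalization used in Theorem 6.7 of \cite{cfpp} — i.e., the specific value of $m$ and the resulting endpoints of the interval — and record it.

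Next I would convert both intervals into the same coordinates. The inequality $\left|px+q\right|^2<\zr\left|p'x+q'\right|^2$ defines an interval (or its complement) in $\partial\bbH=\bbR\cup\{\infty\}$ bounded by the real solutions of $\left|px+q\right|^2=\zr\left|p'x+q'\right|^2$; since $p/q\ne p'/q'$ this is a genuine quadratic with two real roots (or one root and $\infty$ when $p'=0$ or $p=0$), and the endpoints can be written in terms of $p,q,p',q',\zr$. I would then show that these endpoints are exactly the endpoints of the \cite{cfpp} interval. The conceptual reason — which I would make precise — is that the boundary of the excluded half-space in \cite{cfpp} is the set of $-1/s_2$ for which the horoball inclusion $\zs_f(B_m(s))\subseteq B_{\zr m}(s')$ forces tangency with the horoball at $-1/s_2$, and by Lemma~\ref{lemma:tangent} the tangency condition between $B_{m}(s)$ and $B_{m_2}(s_2)$ translates precisely into the determinant identity $\zi(s,s_2)^2=\frac{1}{mm_2}$, which when combined with the analogous identity at $s'$ yields exactly $\left|px+q\right|^2=\zr\left|p'x+q'\right|^2$ at the boundary. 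Identifying which side of the boundary is "excluded" in each formulation then finishes the matching.

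The main obstacle I anticipate is bookkeeping rather than conceptual: pinning down the exact normalization and sign/orientation conventions in Theorem 6.7 of \cite{cfpp} so that the two intervals — not merely their boundary equations — agree, including the degenerate cases $p=0$, $p'=0$, or an endpoint at $\infty$. There is also a small point to check: one must verify that the quadratic $\left|px+q\right|^2-\zr\left|p'x+q'\right|^2$ has the sign making the described set a bounded interval containing $-q/p$ (when $\zr>1$ it opens the "right" way); this uses $\zr>0$ together with the fact that the two horoballs are centered at distinct boundary points. Once the endpoint computation is carried out and the degenerate cases are handled, the equality of the two excluded intervals is immediate, so I would present the endpoint computation as the core of the proof and dispatch the rest as routine case analysis.
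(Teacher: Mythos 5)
Your strategy---describe the interval of Theorem 6.7 of \cite{cfpp} via the tangent horoballs $B_m(t)$ and $B_{\zr m}(t')$, invoke Lemma~\ref{lemma:tangent} to fix the normalization, and then match the resulting half-space against the locus $\left|px+q\right|^2<\zr\left|p'x+q'\right|^2$---is the paper's strategy, and your ``conceptual reason'' is correct in substance: the endpoints of the interval are exactly the $x$ at which the horoball tangent to $B_m(t)$ coincides with the horoball tangent to $B_{\zr m}(t')$, and Lemma~\ref{lemma:tangent} converts that coincidence into $m\,\zi(t,x)^{2}=\zr m\,\zi(t',x)^{2}$, which is the boundary quadratic. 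The weakness is that everything quantitative is deferred: you never pin down the \cite{cfpp} normalization (the half-space $H$ bounded by the geodesic tangent to both horoballs at their common tangency point and containing $B_m(t)$), never verify that your tangency characterization of the endpoints agrees with the ideal endpoints of that geodesic, and never carry out the root computation or the side check---and these steps \emph{are} the proof. For comparison, the paper sidesteps the endpoint computation and all of your anticipated case analysis by applying the single transformation $\zv(z)=\frac{pz+q}{p'z+q'}$, which sends $-q/p$ to $0$ and $-q'/p'$ to $\infty$; a short matrix factorization plus Corollary 6.2 of \cite{cfpp} gives $\zv(B_{\zr m}(t'))=B_{a\zr m}(0)$ with $a=\zi(t,t')$, and Lemma~\ref{lemma:tangent} gives $m\sqrt{\zr}=\zi(t,t')^{-1}$, so $a\zr m=\sqrt{\zr}$ and $\zv(H)=\{z:\left|z\right|<\sqrt{\zr}\}$, which pulls back to the inequality of Corollary~\ref{cor:halfsp} on the nose. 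Note also that your list of degenerate cases is incomplete: besides $p=0$ or $p'=0$, the leading coefficient $p^{2}-\zr p'^{2}$ of your quadratic can vanish (e.g.\ $\zr=(p/p')^{2}$), another reason the M\"{o}bius-map route is preferable to root-by-root matching.
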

  \begin{proof} We have distinct slopes $t=\frac{p}{q}$ and
$t'=\frac{p'}{q'}=\zm_f(t)$, where $p$, $q$ and $p'$, $q'$ are
relatively prime integers.  The interval in Theorem 6.7 of \cite{cfpp}
is described in terms of horoballs $B_m(t)$ and $B_{\zr m}(t')$, where
$\zr$ is the multiplier for $t$.  See Figure~\ref{fig:horoballs}.
These horoballs are tangent.  There is a unique hyperbolic geodesic
tangent to both of these horoballs at their point of tangency.  This
geodesic is the boundary of a hyperbolic half-space $H$ which contains
$B_m(t)$.  The ideal boundary of $H$ is the excluded interval in
Theorem 6.7 of \cite{cfpp}.  

  \begin{figure}
\centerline{\includegraphics{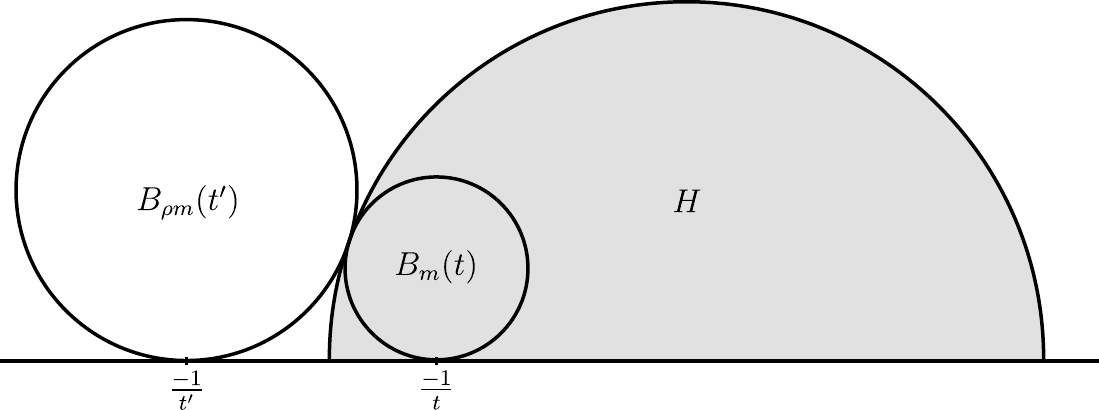}}
\caption{ The horoballs and the
shaded half-space $H$}
\label{fig:horoballs}
  \end{figure}

The statement of Corollary~\ref{cor:halfsp} leads us to consider the
matrix $\left[\begin{smallmatrix}p & q \\ p' & q'
\end{smallmatrix}\right]$.  We multiply its second row by $-1$ if
necessary so that its determinant is positive.  Let
  \begin{equation*}
\zv(z)=\frac{pz+q}{p'z+q'}.
  \end{equation*}
We wish to identify $\zv(H)$.

For this, note that because $\gcd(p',q')=1$, there exist integers $r$
and $s$ such that $\left[\begin{smallmatrix}r & s \\ p' & q'
\end{smallmatrix}\right]\in \text{SL}(2,\bbZ)$.  So there exist
integers $a$, $b$, $c$, $d$ such that
  \begin{equation*}
\left[\begin{matrix}p & q \\ p' & q' \end{matrix}\right]=
\left[\begin{matrix}a & b \\ c & d \end{matrix}\right]
\left[\begin{matrix} r& s \\ p' & q' \end{matrix}\right].
  \end{equation*}
Let
  \begin{equation*}
\zj(z)=\frac{rz+s}{p'z+q'}.
  \end{equation*}
Because $\zv(-\frac{q'}{p'})=\zj(-\frac{q'}{p'})=\infty $, we have
that $c=0$.  Now by computing the second row of the above matrix
product, we find that $d=1$.  By taking determinants, we find that
$a=\zi(t,t')$.  Corollary 6.2 of \cite{cfpp} implies that $\zj(B_{\zr
m}(\frac{p'}{q'}))=B_{\zr m}(0)$.  It follows that $\zv(B_{\zr
m}(\frac{p'}{q'}))=B_{a \zr m}(0)$.  It is clear that
$\zv(B_m(\frac{p}{q}))$ is a horoball tangent to $\partial \bbH$ at 0.
So $\zv(H)$ is the half-space tangent to $\{z\in \bbH:\text{Im}(z)>a
\zr m\}$ at the point $a \zr m\sqrt{-1}$ and whose ideal boundary
contains 0.  Lemma~\ref{lemma:tangent} implies that
$m\sqrt{\zr}=\zi(t,t')^{-1}=a^{-1}$.  So $a \zr m=\sqrt{\zr}$.
Therefore $H$ is the half-space defined by the inequality in
Corollary~\ref{cor:halfsp}.

This proves Theorem~\ref{thm:halfsp}.

\end{proof}

So for Thurston maps with four postcritical points, the excluded
interval in Corollary~\ref{cor:halfsp} is the same as the excluded
interval in the half-space theorem.  Since the proof of
Corollary~\ref{cor:halfsp} is based on the Lipschitz-type inequality
in Theorem~\ref{thm:lipschitz} and Theorem~\ref{thm:nmlip1}
strengthens Theorem~\ref{thm:lipschitz} for NET maps, one might expect
to also obtain a strengthening of Corollary~\ref{cor:halfsp} for NET
maps.  This is what the following theorem does, as can easily be
verified.

\begin{thm}[The NET map half-space theorem]\label{thm:nmhalfsp} Let
$f$ be a NET map.  Let $s=\frac{p}{q}$ be a slope in reduced form, and
suppose that $\zm_f(s)=s'=\frac{p'}{q'}$, also an extended rational
number in reduced form.  Let $d=d(s)$, defined early in
Section~\ref{sec:nmlip}.  Let $e$ be the smallest positive divisor of
$\deg(f)$ such that $e^2\ge \deg(f)$.  Then the interval in $\partial
\bbH$ defined by
  \begin{equation*}
d|px+q|<e|p'x+q'|
  \end{equation*}
does not contain the negative reciprocal of the slope of an
obstruction for $f$.
\end{thm}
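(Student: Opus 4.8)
The plan is to adapt the argument used to prove Theorem~\ref{thm:halfsp}, replacing the appeal to Theorem~\ref{thm:lipschitz} by the sharper NET map bound. Concretely, suppose $s_2=\frac{p_2}{q_2}$ is the slope of an obstruction for $f$, so $\zm_f(s_2)=s_2$ with multiplier $\zr_2\ge 1$. Let $\zg$, $\zd$ be simple closed curves with slopes $s$, $s_2$ respectively, and let $\widetilde\zg$, $\widetilde\zd$ be connected components of $f^{-1}(\zg)$, $f^{-1}(\zd)$ which are neither peripheral nor null homotopic; then $\widetilde\zg$ has slope $s'=\zm_f(s)$ and $\widetilde\zd$ has slope $s_2$. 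Corollary~\ref{cor:nmlip1} gives
\begin{equation*}
\zi(s',s_2)=\zi(\widetilde\zg,\widetilde\zd)\le\frac{d(\zg)}{c(\zd)}\zi(\zg,\zd)=\frac{d}{c(s_2)}\zi(s,s_2).
\end{equation*}

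Next I would convert the factor $d/c(s_2)$ into $d/e$. Since $s_2$ is an obstruction, $\widetilde\zd$ has slope $s_2$, so $c(s_2)d(s_2)\le\deg(f)$ as noted after Corollary~\ref{cor:nmlip1}; moreover $c(s_2)$ divides... more precisely, since the $c(s_2)$ homotopic components of $f^{-1}(\zd)$ each map to $\zd$ with degree $d(s_2)$ and there are $\deg(f)/d(s_2)$ components of $f^{-1}(\zd)$ in all, we have $c(s_2)\mid\deg(f)$ via $c(s_2)\le\deg(f)/d(s_2)$. The key numerical point is that $c(s_2)$ is a positive divisor of $\deg(f)$ with $c(s_2)\cdot c(s_2)\le c(s_2)d(s_2)\cdot\bigl(\deg(f)/\deg(f)\bigr)$; one checks $c(s_2)^2\le c(s_2)d(s_2)\le\deg(f)$ is \emph{not} quite what is wanted, so instead observe that $c(s_2)d(s_2)\le\deg(f)$ forces $c(s_2)\le\deg(f)/d(s_2)$, and since $e$ is by definition the \emph{smallest} positive divisor of $\deg(f)$ with $e^2\ge\deg(f)$, any positive divisor $c$ of $\deg(f)$ with $c<e$ satisfies $c^2<\deg(f)$. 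I would then argue that $d/c(s_2)\le e/d$ is what we need: rearranged, this says $d^2\le e\,c(s_2)$, and since $c(s_2)d(s_2)\le\deg(f)\le e^2$ we get... here the clean route is to note that $c(s_2)$ and $\deg(f)/c(s_2)$ are both divisors, and $d=d(s)$ need not relate to $s_2$ at all, so the inequality we actually want is simply $\frac{d}{c(s_2)}\le\frac{e}{d}\cdot\frac{1}{\text{(something)}}$ — no: the target inequality $d|px+q|<e|p'x+q'|$ combined with $\zi(s',s_2)\le\frac{d}{c(s_2)}\zi(s,s_2)$ requires $\frac{d}{c(s_2)}\le\frac{e}{d}$ would be false in general. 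The correct reading: we want to show the excluded region $\{d|px+q|<e|p'x+q'|\}$ contains no such $x=-q_2/p_2$, i.e. we want $d|pq_2-qp_2|\ge e|p'q_2-q'p_2|$, i.e. $d\,\zi(s,s_2)\ge e\,\zi(s',s_2)$. By Corollary~\ref{cor:nmlip1} it suffices that $e\cdot\frac{d}{c(s_2)}\zi(s,s_2)\le d\,\zi(s,s_2)$ — wait, that needs $e\le c(s_2)$, which is false. So I would instead use the \emph{other} form of the bound.

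The right tool is Theorem~\ref{thm:nmlip1} directly, with $c(s_2)d(s_2)\le\deg(f)$ used to estimate $d(\zd)=d(s_2)$: Theorem~\ref{thm:nmlip1} gives $\zi(s',s_2)\le\frac{d\,d(s_2)}{\deg(f)}\zi(s,s_2)$, while Corollary~\ref{cor:nmlip1} gives $\zi(s',s_2)\le\frac{d}{c(s_2)}\zi(s,s_2)$. Multiplying the two and taking square roots (geometric mean, as in Remark~\ref{remark:nmlip1}), and then using $c(s_2)d(s_2)\le\deg(f)$, yields $\zi(s',s_2)\le d\sqrt{\frac{d(s_2)}{c(s_2)\deg(f)}}\,\zi(s,s_2)\le\frac{d}{\sqrt{c(s_2)d(s_2)}}\cdot\sqrt{\frac{c(s_2)d(s_2)}{\deg(f)}}$... still not obviously $\le d/e$. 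The cleanest argument: among positive divisors $c$ of $\deg(f)$, either $c^2\ge\deg(f)$, in which case $c\ge e$ and $\frac{d}{c}\le\frac{d}{e}$; or $c^2<\deg(f)$, in which case $d(s_2)\ge\deg(f)/c(s_2)>c(s_2)$, so $\deg(f)/d(s_2)<e$ (as $\deg(f)/d(s_2)\le c(s_2)$ and... ) hence $\frac{d\,d(s_2)}{\deg(f)}=\frac{d}{\deg(f)/d(s_2)}\le\frac{d}{\deg(f)/d(s_2)}$ and $\deg(f)/d(s_2)\ge e$ would give $\le d/e$. Indeed if $c(s_2)^2<\deg(f)$ then $d(s_2)\ge\deg(f)/c(s_2)>\sqrt{\deg(f)}$, so $d(s_2)$ is a divisor of $\deg(f)$ with $d(s_2)^2>\deg(f)$, hence $d(s_2)\ge e$ by minimality of $e$, hence by Theorem~\ref{thm:nmlip2} — no, by using $c(s_2)d(s_2)\le\deg(f)$ — we get $\deg(f)/d(s_2)\ge c(s_2)$, so $\frac{d\,d(s_2)}{\deg(f)}\le\frac{d}{c(s_2)}$, but also since $d(s_2)\ge e$: hmm. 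I think the intended argument is: \textbf{in all cases} $\frac{d}{c(s_2)}\le\frac{e}{d(s_2)}\cdot\frac{d\,d(s_2)}{\deg(f)}\cdot(\cdots)$; I would write it as follows.

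So, to finish cleanly: from $c(s_2)d(s_2)\le\deg(f)\le e^2$, at least one of $c(s_2)\le e$ or $d(s_2)\le e$ holds — in fact I claim $c(s_2)\le e$ always, since if $c(s_2)>e$ then $c(s_2)$ is a divisor of $\deg(f)$ with $c(s_2)^2>e^2\ge\deg(f)$... but $c(s_2)>e$ with $c(s_2)\mid\deg(f)$ does not contradict anything, as $e$ is merely the smallest such divisor, not the largest. Let me use the definition correctly: $e$ smallest divisor with $e^2\ge\deg(f)$ means every divisor $c$ with $c<e$ has $c^2<\deg(f)$, equivalently $\deg(f)/c>c$, equivalently $\deg(f)/c\ge e$ (since $\deg(f)/c$ is also a divisor and exceeds $c$, hence exceeds all divisors $<e$ that are $\ge$... ). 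Cleanly: if $c(s_2)\ge e$ then $\frac{d}{c(s_2)}\le\frac d e$ and we use Corollary~\ref{cor:nmlip1}; if $c(s_2)<e$ then $\deg(f)/c(s_2)$ is a divisor $>c(s_2)$, and every divisor strictly below $e$ squares to less than $\deg(f)$, so $\deg(f)/c(s_2)\ge e$, whence $\frac{d\,d(s_2)}{\deg(f)}\le\frac{d\,d(s_2)}{e\,c(s_2)}\le\frac{d}{e}$ using $d(s_2)\le\deg(f)/c(s_2)$ — no, using $d(s_2)c(s_2)\le\deg(f)$, i.e. $d(s_2)\le\deg(f)/c(s_2)$, so $\frac{d\,d(s_2)}{\deg(f)}\le\frac{d}{c(s_2)}$, circular. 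OK: if $c(s_2)<e$ then since $\deg(f)/d(s_2)\ge c(s_2)$ is a divisor; but I want $\deg(f)/d(s_2)\ge e$, equivalently $d(s_2)\le\deg(f)/e\le e$, i.e. $d(s_2)\le e$; is that forced? $d(s_2)c(s_2)\le\deg(f)\le e^2$ and $c(s_2)\ge1$ only gives $d(s_2)\le e^2$. Hmm, so the genuinely hard point, and the one I expect to be the main obstacle, is extracting from $c(s_2)d(s_2)\le\deg(f)$ and the divisibility of $c(s_2), d(s_2)$ into $\deg(f)$ the conclusion $d/c(s_2)\le e/d$, i.e. $d^2\le e\,c(s_2)$, which simply isn't true without using more about $d=d(s)$. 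Therefore I believe the real proof splits on whether $\zi(s,s_2)=0$, uses $\widetilde\zd$'s slope being exactly $s_2$ so that the $c(s_2)$ parallel copies force $c(s_2)\,\zi(\widetilde\zg,\widetilde\zd)\le\zi(\widetilde\zg,f^{-1}(\zd))=d\,\zi(\zg,f^{-1}(\zd))/(\deg f/d)\cdot(\cdots)$, and then the inequality $d|px+q|<e|p'x+q'|$ is compared against $d(s_2)\le\deg(f)/c(s_2)$ together with the case analysis $c(s_2)\gtrless e$ exactly as in the proof of Theorem~\ref{thm:nmlip3}'s ceiling estimate. I would carry out this case analysis, invoking Corollary~\ref{cor:nmlip1} when $c(s_2)\ge e$ and Theorem~\ref{thm:nmlip1} together with $d(s_2)\mid\deg(f)$, $d(s_2)^2\ge\deg(f)$ (which I expect to follow from $c(s_2)<e\Rightarrow c(s_2)^2<\deg(f)\Rightarrow d(s_2)>c(s_2)$ and minimality forcing $d(s_2)\ge e$, hence $\deg(f)/d(s_2)\le\deg(f)/e\le e$) when $c(s_2)<e$, to conclude $d\,\zi(s,s_2)\ge e\,\zi(s',s_2)$ in both cases, which is exactly the statement that $x=-q_2/p_2$ lies outside the excluded interval.
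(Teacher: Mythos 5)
You correctly identify the right tool (Theorem~\ref{thm:nmlip1}) and the right target, namely $e\,\zi(s',s_2)\le d\,\zi(s,s_2)$ for the slope $s_2$ of an obstruction, but the numerical step that converts the factor $\frac{d\,d(s_2)}{\deg(f)}$ into $\frac{d}{e}$ is never actually established, and your attempted derivation of it is flawed. The missing idea is to use the hypothesis that $s_2$ is an obstruction in the form $d(s_2)\le c(s_2)$ (multiplier $c(s_2)/d(s_2)\ge 1$). That gives $d(s_2)^2\le c(s_2)d(s_2)\le\deg(f)$, hence $\bigl(\deg(f)/d(s_2)\bigr)^2\ge\deg(f)$; since $d(s_2)$ divides $\deg(f)$ for a NET map (Theorem 4.1 of \cite{cfpp}), $\deg(f)/d(s_2)$ is a positive divisor of $\deg(f)$ whose square is at least $\deg(f)$, so minimality of $e$ gives $\deg(f)/d(s_2)\ge e$, and Theorem~\ref{thm:nmlip1} finishes in one line with no case analysis. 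This is the paper's argument.

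Your proposal instead never invokes $c(s_2)\ge d(s_2)$ — indeed you explicitly ask ``is that forced?'' about a bound on $d(s_2)$ and answer no — and your final sketch in the case $c(s_2)<e$ contains two errors. First, the implication $c(s_2)<e\Rightarrow c(s_2)^2<\deg(f)$ requires $c(s_2)$ to be a \emph{divisor} of $\deg(f)$, which is not known (only $c(s_2)\le\deg(f)/d(s_2)$ holds, not equality), and the further step $c(s_2)^2<\deg(f)\Rightarrow d(s_2)>c(s_2)$ does not follow from $c(s_2)d(s_2)\le\deg(f)$; worse, $d(s_2)>c(s_2)$ contradicts the obstruction condition. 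Second, even granting that chain, you conclude $\deg(f)/d(s_2)\le e$, whereas the inequality needed to bound $\frac{d\,d(s_2)}{\deg(f)}=\frac{d}{\deg(f)/d(s_2)}$ by $\frac{d}{e}$ is $\deg(f)/d(s_2)\ge e$ — the opposite direction. So the proof as proposed does not close; the single clean estimate above replaces your entire case analysis on $c(s_2)\gtrless e$.
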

  \begin{proof} Let $\zg$ be a simple closed curve in $S^2\setminus
P_f$ with slope $s$, so that $d=d(\zg)$.  Let $\zd$ be a simple closed
curve in $S^2\setminus P_f$ which is an obstruction for $f$.  The
multiplier of $\zd$ is $\frac{c(\zd)}{d(\zd)}$.  Because $\zd$ is an
obstruction, $\frac{c(\zd)}{d(\zd)}\ge 1$.  Because $f$ is a NET map,
Theorem 4.1 of \cite{cfpp} implies that $d(\zd)$ divides $\deg(f)$.
Of course, $c(\zd)d(\zd)\le \deg(f)$.  Thus
  \begin{equation*}
\left(\frac{\deg(f)}{d(\zd)}\right)^2\ge
\frac{c(\zd)d(\zd)\deg(f)}{d(\zd)^2}=\frac{c(\zd)}{d(\zd)}\cdot
\deg(f)\ge \deg(f).
  \end{equation*}
So the minimality of $e$ implies that $\frac{\deg(f)}{d(\zd)}\ge e$.

Now we apply Theorem~\ref{thm:nmlip1} to $\zg$ and $\zd$.  The curve
$\widetilde{\zg}$ in Theorem~\ref{thm:nmlip1} has slope
$\frac{p'}{q'}$, and $\widetilde{\zd}$ is homotopic to $\zd$ relative
to $P_f$.  Theorem~\ref{thm:nmlip1} and the previous paragraph imply
that
  \begin{equation*}
\zi(\widetilde{\zg},\widetilde{\zd})\le
\frac{d(\zg)d(\zd)}{\deg(f)}\zi(\zg,\zd)\le \frac{d}{e}\zi(\zg,\zd).
  \end{equation*}
It is a straightforward matter to complete the proof of
Theorem~\ref{thm:nmhalfsp}.

\end{proof}

In the same way, we have the following NET map analog of
Corollary~\ref{cor:fixedpt}.

\begin{thm}\label{thm:nmfixedpt} Under the assumptions of
Theorem~\ref{thm:nmhalfsp}, the interval in $\partial \bbH$ defined by
  \begin{equation*}
d|px+q|<|p'x+q'|
  \end{equation*}
does not contain the negative reciprocal of a fixed point of $\zm_f$.
\end{thm}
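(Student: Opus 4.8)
The plan is to imitate the proof of Theorem~\ref{thm:nmhalfsp}, but with the obstruction $\zd$ replaced by an arbitrary fixed point of $\zm_f$, and the key difference is that we no longer have a lower bound on the multiplier, so we must squeeze the estimate using $c(\zd)d(\zd)\le \deg(f)$ together with $d(\zd)\ge 1$ rather than the sharper inequality $\deg(f)/d(\zd)\ge e$.

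First I would let $\zg$ be a simple closed curve in $S^2\setminus P_f$ with slope $s$, so that $d(\zg)=d$, and let $\zd$ be a simple closed curve whose slope $s_2=\frac{p_2}{q_2}$ is a fixed point of $\zm_f$; then a connected component $\widetilde{\zd}$ of $f^{-1}(\zd)$ that is neither peripheral nor null homotopic is homotopic to $\zd$ relative to $P_f$, so $\zi(\widetilde{\zg},\widetilde{\zd})=\zi(\widetilde{\zg},\zd)$, where $\widetilde{\zg}$ is a connected component of $f^{-1}(\zg)$ having slope $s'=\frac{p'}{q'}$. Next I would invoke Corollary~\ref{cor:nmlip1}, which gives
\begin{equation*}
\zi(\widetilde{\zg},\widetilde{\zd})\le \frac{d(\zg)}{c(\zd)}\zi(\zg,\zd).
\end{equation*}
Since $\zd$ is a single simple closed curve, not an obstruction, we only know $c(\zd)\ge 1$, which yields $\zi(\widetilde{\zg},\widetilde{\zd})\le d\,\zi(\zg,\zd)$, i.e.\ $\zi(s',s_2)\le d\,\zi(s,s_2)$. (Equivalently one could get this directly from Theorem~\ref{thm:nmlip1} using $d(\zd)\le\deg(f)$.) Writing this out in coordinates,
\begin{equation*}
|p'q_2-q'p_2|\le d\,|pq_2-qp_2|,
\end{equation*}
and substituting $x=-q_2/p_2$ and clearing $|p_2|$ gives $|p'x+q'|\le d\,|px+q|$. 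Hence the strict inequality $d|px+q|<|p'x+q'|$ cannot hold at $x=-q_2/p_2$, which is exactly the assertion that the interval it defines omits the negative reciprocal of any fixed point of $\zm_f$.

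The only genuinely delicate point is making sure that all the relevant curves may be chosen to realize geometric intersection numbers simultaneously, so that the count of intersection points is valid; but this is handled exactly as in the proof of Theorem~\ref{thm:nmlip1} (passing to geodesics for the half-translation structure via $f=h\circ g$), and since Corollary~\ref{cor:nmlip1} already packages this, there is no remaining obstacle. I would close by remarking that this is precisely the NET-map strengthening of Corollary~\ref{cor:fixedpt}, replacing the factor $\deg(f)$ there by $d=d(s)$.
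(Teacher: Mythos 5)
Your proposal is correct and follows exactly the route the paper intends: the paper gives no separate proof, saying only that the result follows ``in the same way'' as Theorem~\ref{thm:nmhalfsp}, i.e.\ by applying Theorem~\ref{thm:nmlip1} (equivalently Corollary~\ref{cor:nmlip1}) to $\zg$ and a curve $\zd$ whose slope is a fixed point, with the obstruction-specific bound $\deg(f)/d(\zd)\ge e$ replaced by the trivial bound $c(\zd)\ge 1$. Your coordinate computation converting $\zi(s',s_2)\le d\,\zi(s,s_2)$ into the failure of $d|px+q|<|p'x+q'|$ at $x=-q_2/p_2$ is exactly what is needed.
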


The rest of this section is devoted to investigating limitations of
the half-space theorem.  The main results are in
Theorems~\ref{thm:omit2} and \ref{thm:omit3}.
Theorems~\ref{thm:omit2} and \ref{thm:omit3} rest on
Theorem~\ref{thm:omit1}.  This uses the notation $c(s)$ and $d(s)$
from early in Section~\ref{sec:nmlip}.

\begin{thm}\label{thm:omit1}Let $f$ be a NET map with
postcritical set $P_f$.  Let $\za$ be a core arc for $(S^2,P_f)$
having slope s with respect to a fixed marking of $(S^2,P_f)$.
Suppose that there is a core arc $\widetilde{\za}$ in $f^{-1}(\za)$
with slope s which $f$ maps to $\za$ with degree 1. Then we have the
following.
\begin{enumerate}
  \item If $t$ is a slope with $t\ne s$ which is fixed by $\zm_f$,
then $c(t)=1$.
  \item Every excluded open interval for $f$ arising from the
half-space theorem \cite[Theorem 6.7]{cfpp} omits $-1/s$.
  \item If $c(t)d(t)>1$ for every slope $t$ such that $c(t)\ne0$, then
the closure in $\partial \bbH$ of every excluded interval for $f$
which arises from the half-space theorem \cite[Theorem 6.7]{cfpp}
omits $-1/s$.
\end{enumerate}
\end{thm}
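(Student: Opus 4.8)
The plan is to derive all three parts from a single application of Theorem~\ref{thm:nmlip2}, using the given core arc $\za$ together with its distinguished lift $\widetilde{\za}$. Since $\widetilde{\za}$ is a single lift of $\za$, it contains one lift, so $d(\widetilde{\za})=1$ in the notation of Theorem~\ref{thm:nmlip2}; and $\widetilde{\za}$ has slope $s$ by hypothesis. Thus for every simple closed curve $\zd$ in $S^2\setminus P_f$ of slope $t$ with $\zm_f(t)\ne\odot$, choosing $\widetilde{\zd}$ to be a non-peripheral, non-null-homotopic component of $f^{-1}(\zd)$ (necessarily of slope $\zm_f(t)$), Theorem~\ref{thm:nmlip2} yields the master inequality
\begin{equation*}
\zi\bigl(s,\zm_f(t)\bigr)=\zi(\widetilde{\za},\widetilde{\zd})\le\frac{1}{c(t)}\,\zi(\za,\zd)=\frac{1}{c(t)}\,\zi(s,t).
\end{equation*}
The first thing I would extract is a reduction: if $\zm_f(s)\ne\odot$, then taking $t=s$ forces $\zi\bigl(s,\zm_f(s)\bigr)\le\frac{1}{c(s)}\zi(s,s)=0$, so $\zm_f(s)=s$; hence $\zm_f(s)\in\{s,\odot\}$, and consequently every slope $u$ with $\zm_f(u)\notin\{u,\odot\}$ satisfies $u\ne s$ and $\zi(s,u)>0$.

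For part (1), I would apply the master inequality with the given slope $t\ne s$, which is legitimate since $\zm_f(t)=t\ne\odot$. It gives $\zi(s,t)\le\frac{1}{c(t)}\zi(s,t)$, and since $t\ne s$ implies $\zi(s,t)>0$ we get $c(t)\le1$. As $\zm_f(t)=t\ne\odot$ forces $c(t)\ge1$, we conclude $c(t)=1$.

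For parts (2) and (3), I would take an excluded interval $I$ arising from the half-space theorem \cite[Theorem 6.7]{cfpp}; it is associated to a slope $u$ with $\zm_f(u)=u'\notin\{u,\odot\}$. Write $s=p/q$, $u=p_u/q_u$, $u'=p_{u'}/q_{u'}$ in reduced form. By the reduction, $u\ne s$ and $\zi(s,u)>0$. By Theorem~\ref{thm:halfsp}, $I$ is the interval of Corollary~\ref{cor:halfsp}, defined by $|p_ux+q_u|^2<\zr|p_{u'}x+q_{u'}|^2$, where $\zr=c(u)/d(u)$ is the multiplier of $u$ for the NET map $f$. Since $|p_u(-q/p)+q_u|=\zi(u,s)/|p|$ and likewise for $u'$, the point $-1/s=-q/p$ lies in $I$ exactly when $\zi(s,u)^2<\zr\,\zi(s,u')^2$, and in $\overline{I}$ exactly when the corresponding non-strict inequality holds. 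Now the master inequality with $t=u$ gives $\zi(s,u')\le\frac{1}{c(u)}\zi(s,u)$, and since $c(u)d(u)\ge1$ we have $\zr=c(u)/d(u)\le c(u)^2$, whence $\sqrt{\zr}\,\zi(s,u')\le\frac{\sqrt{\zr}}{c(u)}\,\zi(s,u)\le\zi(s,u)$. Therefore $\zi(s,u)^2\ge\zr\,\zi(s,u')^2$ and $-1/s\notin I$, which is part (2). Under the additional hypothesis of part (3) we have $c(u)d(u)>1$ (as $c(u)\ne0$ here), so $\zr<c(u)^2$; combined with $\zi(s,u)>0$ this makes the inequality strict, $\zi(s,u)^2>\zr\,\zi(s,u')^2$, so $-1/s\notin\overline{I}$, which is part (3).

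I expect the step needing genuine care to be the initial reduction, which prevents the intersection numbers $\zi(s,u)$ in the argument from vanishing; without it the chains of inequalities in (2) and (3) collapse. Everything else is routine: the multiplier identity $\zr=c(u)/d(u)$ for NET maps, the elementary evaluation of $|p_ux+q_u|$ at $x=-1/s$ in terms of $\zi(u,s)$, and the (edge-case-tolerant) dictionary between membership of $-1/s$ in $I$ or $\overline{I}$ and inequalities between intersection numbers.
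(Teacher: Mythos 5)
Your proposal is correct and follows essentially the same route as the paper: a single application of Theorem~\ref{thm:nmlip2} with $d(\widetilde{\za})=1$ yields the inequality $c(t)\,\zi(s,\zm_f(t))\le\zi(s,t)$, from which statement 1 follows immediately and statements 2 and 3 follow via the translation of Theorem~\ref{thm:halfsp} between excluded intervals and intersection-number inequalities. Your explicit preliminary reduction showing $\zm_f(s)\in\{s,\odot\}$ (so that the half-space theorem never produces an interval centered at $-1/s$ itself) is left implicit in the paper but is a worthwhile clarification, not a different argument.
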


\begin{remark}\label{remark:fixedarc} The assumptions imply that $\za$
has a lift which is a core arc either homotopic relative to $P_f$ to
$\za$ or homotopic relative to $P_f$ to a core arc disjoint from
$\za$.  In both cases, statement 1 follows from the proof of Theorem
3.2 of \cite{pt}.
\end{remark}

  \begin{proof} To prove statement 1, we apply
Theorem~\ref{thm:nmlip2}.  We choose the simple closed curve $\zd$
there to have slope $t$.  The curve $\widetilde{\zd}$ is homotopic to
$\zd$ relative to $P_f$.  We have by assumption that
$d(\widetilde{\za})=1$.  So Theorem~\ref{thm:nmlip2} implies that
$c(t)\zi(s,t)\le \zi(s,t)$.  But $\zi(s,t)\ne 0$ because $t\ne s$.
Hence $c(t)=1$.  This proves statement 1.

To prove statement 2, we apply Theorem~\ref{thm:nmlip2} again.  We
take the core arc $\za$ there to be the present $\za$.  Suppose that
the simple closed curve $\zd$ in Theorem~\ref{thm:nmlip2} has slope
$\frac{p}{q}=t$ and that $\widetilde{\zd}$ has slope
$\frac{p'}{q'}=t'\ne t$, where $p$, $q$ and $p'$, $q'$ are two pairs
of relatively prime integers.  Theorem~\ref{thm:nmlip2} implies that
$c(t)\zi(s,t')\le \zi(s,t)$.  Hence
  \begin{equation*}
c(t)|p'(-1/s)+q'|\le |p(-1/s)+q|,
  \end{equation*}
and so
  \begin{equation*}
|p(-1/s)+q|^2\ge c(t)^2|p'(-1/s)+q'|^2\ge
\frac{c(t)}{d(t)}|p'(-1/s)+q'|^2.
  \end{equation*}
This and Theorem~\ref{thm:halfsp} imply that $-\frac{1}{s}$ is not in
the excluded interval arising from the half space theorem applied to
slope $t$.  This proves statement 2.

To prove statement 3, we focus on the second inequality in the last
display.  If it is an equality, then either $s=t'$ or $c(t)=d(t)=1$.
Since $c(t)d(t)>1$,
  \begin{equation*}
|p(-1/s)+q|^2> \frac{c(t)}{d(t)}|p'(-1/s)+q'|^2.
  \end{equation*}
This proves statement 3.

\end{proof}

\begin{remark}\label{remark:omit1} The condition $c(t)d(t)>1$ in
statement 3 of Theorem~\ref{thm:omit1} is satisfied if both elementary
divisors \cite[Section 8]{fpp1} of $f$ are greater than 1.  Indeed, if
both elementary divisors of $f$ are greater than 1, then $d(t)>1$ for
every slope $t$.  To prove this, let $t=\frac{p}{q}$, expressed in
reduced form.  Statement 1 of Theorem 4.1 in \cite{cfpp} implies that
$d(t)$ equals the order of the image of $(q,p)\in \bbZ^2$ in the group
$\bbZ^2/\zL_1$, with $\zL_1$ \cite[Section 3]{fpp1} as usual.  We have
that $\zL_1\subseteq n\bbZ^2$, where $n$ is the smaller elementary
divisor of $f$.  By assumption, $n>1$.  Because $p$ and $q$ are
relatively prime, $n|d(t)$.  In particular, $d(t)>1$.  Thus
$c(t)d(t)>1$ if $c(t)\ne 0$.
\end{remark}

Now we strengthen the assumptions in Theorem~\ref{thm:omit1} and
obtain a stronger conclusion.

\begin{thm}\label{thm:omit2}Let $f$ be a NET map with postcritical set
$P_f$.  Suppose that neither elementary divisor of $f$ equals 1.  Let
$\za$ be a core arc for $(S^2,P_f)$ having slope s with respect to a
fixed marking of $(S^2,P_f)$.  Suppose that there is a core arc in
$f^{-1}(\za)$ with slope s which $f$ maps to $\za$ with degree 1.
Suppose that $s$ is not the slope of an obstruction.  Then $f$ is
Thurston equivalent to a rational map.  Moreover, the closure of every
excluded interval for $f$ which arises from the half-space theorem
\cite[Theorem 6.7]{cfpp} omits $-1/s$.  Hence every finite union of
these excluded intervals omits an interval of $\partial \bbH$.
\end{thm}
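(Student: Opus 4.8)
The plan is to establish the three assertions of Theorem~\ref{thm:omit2} in turn: that $f$ has no Thurston obstruction (whence it is Thurston equivalent to a rational map), that the closure of every excluded interval omits $-1/s$, and that consequently every finite union of excluded intervals omits an open interval of $\partial\bbH$. The first two will be read off from Theorem~\ref{thm:omit1} and Remark~\ref{remark:omit1}, and the third is a routine point-set argument.

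First I would note that, since $P_f$ has exactly four points, any two disjoint essential non-peripheral simple closed curves in $S^2\setminus P_f$ are homotopic, so a Thurston obstruction for $f$ is a single simple closed curve $\zd$ whose slope $t$ is fixed by $\zm_f$; its Thurston matrix is then the $1\times 1$ matrix whose entry is the multiplier $\zr=c(t)/d(t)$ of $t$, and hence $f$ has an obstruction precisely when some fixed slope of $\zm_f$ has multiplier at least $1$. The hypotheses of Theorem~\ref{thm:omit1} hold for the given core arc $\za$, so its statement 1 gives $c(t)=1$ for every fixed slope $t\ne s$; and since neither elementary divisor of $f$ equals $1$, Remark~\ref{remark:omit1} gives $d(t)>1$, so such a $t$ has multiplier $1/d(t)<1$. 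As $s$ itself is not the slope of an obstruction by hypothesis, $\zm_f$ has no fixed point with multiplier at least $1$, that is, $f$ has no Thurston obstruction. By Thurston's characterization of rational maps, $f$ is Thurston equivalent to a rational map (immediately so when the orbifold of $f$ is hyperbolic).

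For the \emph{moreover} clause I would invoke statement 3 of Theorem~\ref{thm:omit1}, whose remaining hypothesis is that $c(t)d(t)>1$ for every slope $t$ with $c(t)\ne 0$. This is exactly what Remark~\ref{remark:omit1} supplies once both elementary divisors of $f$ exceed $1$, since then $d(t)>1$ for every slope $t$. Thus Theorem~\ref{thm:omit1}(3) applies and gives that the closure in $\partial\bbH$ of every excluded interval for $f$ arising from the half-space theorem omits $-1/s$.

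Finally, given finitely many excluded intervals $I_1,\dots,I_k$ arising from the half-space theorem, their union has closure $\overline{I_1}\cup\dots\cup\overline{I_k}$, which does not contain $-1/s$ by the previous clause; hence some open interval of $\partial\bbH$ about $-1/s$ is disjoint from $I_1\cup\dots\cup I_k$, which is the last assertion. The one place I expect genuine care to be needed is the appeal to Thurston's theorem when the orbifold of $f$ is Euclidean; every other step is either a direct citation of Theorem~\ref{thm:omit1} together with Remark~\ref{remark:omit1}, or elementary topology.
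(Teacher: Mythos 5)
Your argument tracks the paper's proof almost line for line: statement 1 of Theorem~\ref{thm:omit1} together with Remark~\ref{remark:omit1} rules out obstructions at slopes $t\ne s$, the hypothesis rules out $s$ itself, statement 3 of Theorem~\ref{thm:omit1} (with its hypothesis $c(t)d(t)>1$ supplied by Remark~\ref{remark:omit1}) gives the \emph{moreover} clause, and the finite-union statement is the point-set observation you give. All of that is correct.

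The one step you flag but do not carry out --- the applicability of Thurston's characterization when the orbifold of $f$ might be Euclidean --- is a genuine gap, because the criterion ``rational if and only if no obstruction'' is only stated for hyperbolic orbifolds, and Thurston maps with Euclidean orbifold need not be rational even when unobstructed. The paper closes this gap by showing the Euclidean case cannot occur under the hypotheses: if the orbifold of $f$ were Euclidean, the existence of a core arc in $f^{-1}(\za)$ of slope $s$ mapping to $\za$ with degree $1$ would force $d(s)=1$, whereas Remark~\ref{remark:omit1} shows that both elementary divisors exceeding $1$ forces $d(t)>1$ for \emph{every} slope $t$ (since $\zL_1\subseteq n\bbZ^2$ with $n>1$). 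This contradiction shows the orbifold is hyperbolic, and only then is the characterization invoked. You should add this short argument; without it the first assertion of the theorem is not established.
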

  \begin{proof} We begin by proving that the orbifold of $f$ is
hyperbolic.  We proceed by contradiction.  Suppose that the orbifold
of $f$ is Euclidean.  In this case, the assumption that
$\widetilde{\za}$ maps to $\za$ with degree 1 implies that $d(s)=1$.
This contradicts Remark~\ref{remark:omit1}.  Thus the orbifold of $f$
is hyperbolic.

So to prove that $f$ is rational, it suffices to prove that it has no
obstruction.  We have assumed that $s$ is not the slope of an
obstruction.  Statement 1 of Theorem~\ref{thm:omit1} implies that if
$t$ is a slope with $t\ne s$ which is fixed by $\mu_f$, then $c(t)=1$.
Remark~\ref{remark:omit1} shows that $d(t)>1$ because neither
elementary divisor of $f$ is 1.  So $c(t)/d(t)<1$.  So $t$ is not an
obstruction, and $f$ is Thurston equivalent to a rational map.

The rest of Theorem~\ref{thm:omit2} follows from statement 3 of
Theorem~\ref{thm:omit1}. 

\end{proof}

\begin{remark}\label{remark:omit2}It is easy to satisfy the conditions
of Theorem~\ref{thm:omit2}.  For example, the NET map with the
presentation diagram (for which see \cite{fpp1}) in
Figure~\ref{fig:omit2} satisfies the conditions of
Theorem~\ref{thm:omit2}.  We may take $s$ to be either 0 or $\infty $.
Moreover, one can verify using Lemma 4.2 of \cite{cfpp} that every
multiplier is less than 1 for this example, and so the modular group
Hurwitz class (for which see \cite{fpp2}) of this map is completely
unobstructed.
\end{remark}

  \begin{figure} \centerline{\includegraphics{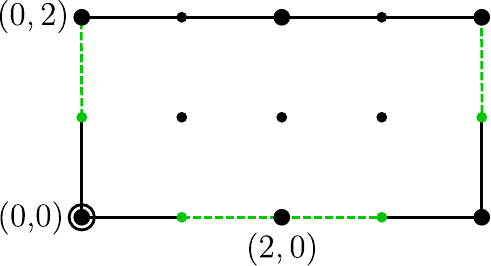}}
\caption{ A presentation diagram for a NET map which satisfies
the conditions of Theorem~\ref{thm:omit2}}
\label{fig:omit2}
  \end{figure}

The next theorem provides conditions under which the union of the
excluded intervals arising from the half-space theorem omits
infinitely many points.

\begin{thm}\label{thm:omit3}Let $f$ be a NET map with
postcritical set $P_f$.  Let $\za$ be a core arc for $(S^2,P_f)$
having slope s with respect to a fixed marking of $(S^2,P_f)$.
Suppose that there is a core arc in $f^{-1}(\za)$ with slope s which
$f$ maps to $\za$ with degree 1.  Suppose that $t\ne s$ is the slope
of an obstruction.  Then there exists an infinite sequence of points
in $\partial \bbH$ converging to $-1/t$ such that no element in this sequence
is contained in an excluded interval for $f$ which arises from the
half-space theorem \cite[Theorem 6.7]{cfpp}.
\end{thm}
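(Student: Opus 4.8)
The plan is to exploit the fact that an obstruction $\zd$ with slope $t$ has multiplier $c(t)/d(t)\ge 1$, so it is a fixed point of $\zm_f$, and $f$ therefore commutes up to homotopy with a nonzero power of a primitive Dehn twist $\zt$ about $\zd$. This is exactly the mechanism highlighted in the introduction: since $t\ne s$, the twist orbit $\{\zt^k(s):k\in\bbZ\}$ consists of distinct slopes, and I expect all of them to be fixed by $\zm_f$ (or at least to satisfy $c=1$, which is what Theorem~\ref{thm:omit1}(1) gives for $s$'s orbit). The cusps $-1/\zt^k(s)$ on $\partial\bbH$ converge to the attracting/repelling fixed point of $\zt$ acting on $\partial\bbH$, which is precisely $-1/t$. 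So the sequence I propose is $x_k=-1/\zt^k(s)$, $k\to\infty$ (in the appropriate direction), converging to $-1/t$.

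The key steps, in order: First I would invoke the commutation relation: because $\zd$ is an obstruction its multiplier is at least $1$, so $\zm_f(t)=t$, and by the discussion preceding Theorem~\ref{thm:fixedpts} there is a Dehn twist power $\zt$ about $\zd$ with $[f]\circ\zt\simeq\zt'\circ[f]$ for a suitable mapping class $\zt'$; in particular the slope function satisfies $\zm_f(\zt^k(s))=\zt'^k(\zm_f(s))$ in a way that lets me track core arcs. Second, I would transfer the hypothesis to core arcs: the given lift $\widetilde{\za}$ of $\za$ with slope $s$ and degree $1$ persists under twisting — applying $\zt^k$ to $\za$ gives a core arc $\za_k$ with slope $\zt^k(s)$, and $f^{-1}(\za_k)$ contains $\zt'^k(\widetilde{\za})$, which is still a core arc mapping to $\za_k$ with degree $1$. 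Hence Theorem~\ref{thm:omit1}, applied with $\za_k$ in place of $\za$, shows that $-1/\zt^k(s)$ is omitted by every excluded interval arising from the half-space theorem. Third, I would check convergence: $\zt$ acts on $\partial\bbH$ as a parabolic fixing $-1/t$, so $\zt^k(-1/s)\to -1/t$ as $k\to\pm\infty$; choosing the correct sign of $k$ gives the desired infinite sequence converging to $-1/t$, and distinctness of the $\zt^k(s)$ follows since $t\ne s$.

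The main obstacle will be step two — making precise that the ``good lift'' hypothesis of Theorem~\ref{thm:omit1} is preserved under the twisting. One has to be careful that $\zt$ (a twist about the \emph{obstruction} $\zd$, not about a curve related to $\za$) genuinely commutes with $f$ up to homotopy in a way compatible with the core-arc bookkeeping, and that the lift $\zt'^k(\widetilde\za)$ really is a component of $f^{-1}(\za_k)$ of the required form. I would handle this by working with the liftable mapping class group $G_f$ and its companion $\widetilde{G}_f$ as in the second proof of Theorem~\ref{thm:lipschitz}: $\zt\in G_f$ with lift $\widetilde\zt=\zt'$, and then $f^{-1}(\zt(\za))=\zt'(f^{-1}(\za))$ as subsets of $S^2$ up to homotopy, so the degree-$1$ core-arc lift of $\za$ maps to a degree-$1$ core-arc lift of $\za_k$. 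Once that is in place, the three statements of Theorem~\ref{thm:omit1} apply verbatim to each $\za_k$, and the conclusion follows. A secondary, minor point is to confirm that $-1/t$ itself is not one of the omitted points being listed (it need not be, since $\zm_f(t)=t$ means no excluded interval is even produced at slope $t$), but this does not affect the statement, which only asserts convergence of the sequence to $-1/t$.
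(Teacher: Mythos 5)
Your overall strategy coincides with the paper's: twist the core arc $\za$ by powers of a Dehn twist $\zt$ about the obstruction, apply statement 2 of Theorem~\ref{thm:omit1} to each twisted arc, and observe that the cusps $\zs_\zt^n(-1/s)$ converge to the parabolic fixed point $-1/t$. However, there is a genuine gap at precisely the step you flag as the main obstacle, and your proposed fix does not close it. You assert that because the multiplier of $t$ is at least $1$, the map $f$ commutes up to homotopy with a power $\zt$ of the Dehn twist about the obstruction. That implication is false in general: liftability always holds (a power $\zt^{2d}$ lifts to $\zt^{2c}$, where $c/d$ is the multiplier, as in Theorem 7.1 of \cite{cfpp}), but genuine commutation $\zt[f]=[f]\widetilde{\zt}$ with $\widetilde{\zt}=\zt$ requires the multiplier to equal $1$ exactly. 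If $\widetilde{\zt}\ne\zt$, then the degree-one core-arc lift of $\za_n=\zt^n(\za)$ that your bookkeeping produces is $\widetilde{\zt}^{\,n}(\widetilde{\za})$, whose slope is $\zm_{\widetilde{\zt}}^{\,n}(s)$ rather than $\zm_{\zt}^{\,n}(s)$; the hypothesis of Theorem~\ref{thm:omit1} --- that the lift have the \emph{same} slope as $\za_n$ --- then fails, and statement 2 cannot be applied to $\za_n$.

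The missing idea is to apply statement 1 of Theorem~\ref{thm:omit1} to the obstruction slope $t$ itself (not to the orbit of $s$, as in your parenthetical remark): the hypotheses of Theorem~\ref{thm:omit3} give $c(t)=1$, and since $t$ is an obstruction, $c(t)/d(t)\ge 1$ forces $d(t)=1$, so the multiplier of $t$ is exactly $1$. Only then does $f$ commute with $\zt$ up to homotopy relative to $P_f$, so that the degree-one, same-slope core-arc lift persists under twisting. With that in place, the remainder of your argument (statement 2 of Theorem~\ref{thm:omit1} applied to each $\za_n$, and convergence of $\zs_\zt^n(-1/s)$ to the fixed point $-1/t$ of $\zs_\zt$) goes through exactly as in the paper.
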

  \begin{proof} Statement 1 of Theorem~\ref{thm:omit1} implies that
$c(t)=1$.  Since $t$ is an obstruction, $c(t)/d(t)\ge 1$.  So $d(t)=1$
and the multiplier of $t$ is 1.  

Now let $\zt$ be a (nonzero power of a primitive) Dehn twist about a
simple closed curve with slope $t$ whose homotopy class in the modular
group of $f$ is liftable.  The fact that $t$ is an obstruction for $f$
with multiplier 1 implies that $f$ and $\zt$ commute up to homotopy
relative to $P_f$.  It follows that some core arc in
$f^{-1}(\zt^n(\za))$ with the same slope as $\zt^n(\za)$ maps to
$\zt^n(\za)$ with degree 1 for every integer $n$.  Let $\zs_\zt$
denote the pullback map induced by $\zt$ on $\bbH$, much as $f$
induces a pullback map $\zs_f$ on $\bbH$.  Now statement 2 of
Theorem~\ref{thm:omit1} implies that every excluded interval for $f$
which arises from the half-space theorem omits $\zs_\zt^n(-1/s)$ for
every integer $n$.  These points converge to the fixed point of
$\zs_\zt$, namely $-1/t$, as $n$ tends to $\infty $.

This proves Theorem~\ref{thm:omit3}.

\end{proof}

\begin{remark}\label{remark:omit3}It is easy to satisfy the conditions
of Theorem~\ref{thm:omit3}.  For example, the NET map with the
presentation diagram (for which see \cite{fpp1}) in
Figure~\ref{fig:omit3} satisfies the conditions of
Theorem~\ref{thm:omit3} with $s=0$ and $t=\infty $. This is 21HClass3
in the census of modular group Hurwitz class representatives in
\cite{NET}.  The proof of Theorem~\ref{thm:omit3} implies that no
excluded interval for this NET map contains the reciprocal of an
integer.  In fact, the output for the computer program {\tt NETmap} in
\cite{NET} suggests that no excluded interval for this NET map
contains 2 times the reciprocal of an integer.
\end{remark}

  \begin{figure} \centerline{\includegraphics{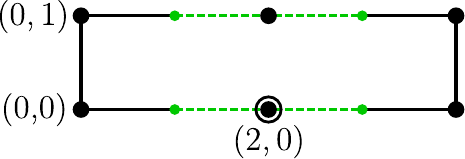}}
\caption{A presentation diagram for a NET map which satisfies
the conditions of Theorem~\ref{thm:omit3}}
\label{fig:omit3}
  \end{figure}

\section{A property of the modular group virtual multi-endomorphism
}\label{sec:fixedsgp}\nosubsections

The goal of this section is to prove a somewhat technical result in
Lemma~\ref{lemma:fixedsgp}.  This will be used in
Section~\ref{sec:bdfixed} to bound the number of slope function fixed
points.  In this section and the next, we work with a general Thurston
map with four postcritical points, not necessarily a NET map.  We need
some preparations to state Lemma~\ref{lemma:fixedsgp}.

Let $f$ be a Thurston map with exactly four postcritical points.  Let
$G$ be the modular group of $f$.  It is well known \cite[Proposition
2.7]{fm} that $G\cong \text{PSL}(2,\bbZ)\ltimes (\bbZ/2\bbZ)^2$.
Following Section 3 of \cite{fpp2}, we say that an element of $G$ is
elliptic, parabolic or hyperbolic according to whether its
$\text{PSL}(2,\bbZ)$-factor is elliptic, parabolic or hyperbolic.  The
remaining elements of $G$ are translations in $T=(\bbZ/2\bbZ)^2$.
Suppose that we have an element $\zj\in G$ whose first factor is the
image of $\left[\begin{smallmatrix}a & b \\ c & d
\end{smallmatrix}\right]\in \text{SL}(2,\bbZ)$.  Then Proposition 4.1
of \cite{fpp2} shows that the pullback map $\zs_\zj\co \bbH\to \bbH$
induced by $\zj$ is given by $\zs_\zj(z)=\frac{dz+b}{cz+a}$.  So, as
in Corollary 4.2 of \cite{fpp2}, the linear fractional transformation
$\zs_\zj$ is elliptic, parabolic or hyperbolic if and only if $\zj$ is
elliptic, parabolic or hyperbolic.

An element $\zj\in G$ is liftable for $f$ if and only if there exists
$\widetilde{\zj}\in G$ such that $\zj[f]=[f]\widetilde{\zj}$, where
$[f]$ is the homotopy class of $f$ relative to its postcritical set.
The element $\widetilde{\zj}$ is a lift of $\zj$.  The set $G_f$ of
all liftables for $f$ in $G$ is a subgroup of $G$.  The element
$\widetilde{\zj}$ need not be unique, and so the assignment
$\zj\mapsto \widetilde{\zj}$ is a multifunction, the modular group
virtual multi-endomorphism for $f$.  This multi-endomorphism maps the
identity element of $G_f$ to a subgroup $\text{DeckMod}(f)$ of $G$.
For every element of $G_f$, the set of all its lifts is a right coset
of $\text{DeckMod}(f)$.

Now we assume that the Thurston pullback map $\zs_f$ is not constant.
If $\zj\in \text{DeckMod}(f)$, then $[f]\zj=[f]$.  Hence $\zs_\zj\circ
\zs_f=\zs_f$.  Thus $\zs_\zj$ is the identity element because $\zs_f$
is not constant.  It follows that the map $\zj\mapsto
\zs_{\widetilde{\zj}}$ for $\zj\in G_f$ is a group antihomomorphism:
$\zs_{\widetilde{\zv \zj}}=\zs_{\widetilde{\zj}}\zs_{\widetilde{\zv}}$
for $\zv,\zj\in G_f$.  In particular, it is a function even if the
modular group virtual multi-endomorphism is not.  The same argument
shows that this multi-endomorphism maps translations in $G_f$ to
translations.

This brings us to Lemma~\ref{lemma:fixedsgp}.

\begin{lemma}\label{lemma:fixedsgp}Let $f$ be a Thurston
map with exactly four postcritical points.  We assume that the
orbifold of $f$ is hyperbolic and that the Thurston pullback map
$\zs_f$ of $f$ is not constant.  Let $\zv$ be an element of
$\text{PSL}(2,\bbZ)$.  Then the set $K$ of all elements $\zj\in G_f$
such that $\zs_{\widetilde{\zj}}=\zv^{-1}\zs_\zj \zv$ is a subgroup of
$G_f$.  Furthermore, $T\cap G_f$ is a normal subgroup of $K$,
$K/(T\cap G_f)$ is cyclic and $K$ contains no hyperbolic elements.
\end{lemma}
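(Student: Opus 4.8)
The plan is to analyze the set $K$ by separating the ``translation part'' (inside $T = (\bbZ/2\bbZ)^2$) from the ``linear part'' (the image in $\text{PSL}(2,\bbZ)$), using the semidirect product structure $G \cong \text{PSL}(2,\bbZ) \ltimes T$.

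First I would verify that $K$ is a subgroup. For $\zj_1, \zj_2 \in K$, one has $\zs_{\widetilde{\zj_1\zj_2}} = \zs_{\widetilde{\zj_2}}\zs_{\widetilde{\zj_1}} = \zv^{-1}\zs_{\zj_2}\zv\zv^{-1}\zs_{\zj_1}\zv = \zv^{-1}\zs_{\zj_2}\zs_{\zj_1}\zv = \zv^{-1}\zs_{\zj_1\zj_2}\zv$, using the antihomomorphism property $\zs_{\widetilde{\zv\zj}} = \zs_{\widetilde{\zj}}\zs_{\widetilde{\zv}}$ established in the text together with the fact that $\zj \mapsto \zs_\zj$ is itself an antihomomorphism (from $\zs_\zj(z) = \frac{dz+b}{cz+a}$). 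Closure under inverses is similar. Next, $T \cap G_f$ is contained in $K$: the text notes the multi-endomorphism sends translations in $G_f$ to translations, so $\widetilde{\zj} \in T$, whence $\zs_{\widetilde{\zj}}$ is the identity; and $\zs_\zj$ is also the identity since $\zj \in T$ is a translation, so the defining equation $\zs_{\widetilde{\zj}} = \zv^{-1}\zs_\zj\zv$ holds trivially (both sides are the identity). That $T \cap G_f$ is normal in $K$ follows because $T$ is normal in $G$, so $T \cap G_f$ is normal in $G_f \supseteq K$.

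To see $K/(T \cap G_f)$ is cyclic: this quotient injects into $G/T \cong \text{PSL}(2,\bbZ)$ via the projection. The defining condition forces, for $\zj \in K$, that $\zs_{\widetilde{\zj}}$ and $\zs_\zj$ are conjugate in $\text{PSL}(2,\bbR)$, hence have the same trace (up to sign) and the same type. I expect the key computation to be: writing $\psi_\zj$ for the $\text{PSL}(2,\bbZ)$-factor of $\zj$, the condition $\zs_{\widetilde{\zj}} = \zv^{-1}\zs_\zj\zv$ together with the structure of the virtual endomorphism (Theorem 7.1 of \cite{cfpp} and the surrounding analysis, which describes $\widetilde{\zj}$ in terms of $\zj$) pins the image of $K$ in $\text{PSL}(2,\bbZ)$ down to a group all of whose nontrivial elements fix a common point of $\partial\bbH$ — that is, to a parabolic (or elliptic of finite order) subgroup, which is cyclic. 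Concretely, an element $\zj \in K$ with $\psi_\zj$ parabolic fixing slope $s$ has $\zs_{\widetilde{\zj}}$ parabolic fixing some related slope, and matching this against $\zv^{-1}\zs_\zj\zv$ forces the fixed point to be shared across all such $\zj$; since parabolic subgroups of $\text{PSL}(2,\bbZ)$ with a common fixed point are cyclic, the image of $K$ is cyclic.

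Finally, $K$ contains no hyperbolic elements. If $\zj \in K$ were hyperbolic, then $\zs_\zj$ is hyperbolic, so $\zv^{-1}\zs_\zj\zv = \zs_{\widetilde{\zj}}$ is hyperbolic, meaning $\widetilde{\zj}$ is hyperbolic. But then I would invoke Theorem 4.4 of \cite{fpp2} (used already in the proof of Theorem~\ref{thm:lipschitz}): since the orbifold of $f$ is hyperbolic, passing from $\zj$ to its lift $\widetilde{\zj}$ strictly decreases the absolute trace, i.e.\ $|\text{tr}(\zs_{\widetilde{\zj}})| < |\text{tr}(\zs_\zj)|$. This contradicts the fact that conjugation preserves trace: $|\text{tr}(\zv^{-1}\zs_\zj\zv)| = |\text{tr}(\zs_\zj)|$. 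Hence no hyperbolic element lies in $K$.

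The main obstacle I anticipate is the cyclicity claim: making precise how the defining relation $\zs_{\widetilde{\zj}} = \zv^{-1}\zs_\zj\zv$, combined with the explicit description of lifts from \cite{cfpp} (the pullback maps $\zm_{\zt_i}$, $\zm_{\widetilde{\zt}_i}$ there), forces all $\text{PSL}(2,\bbZ)$-parts of elements of $K$ to share a fixed point on $\partial\bbH$. The trace-contraction argument of Theorem 4.4 of \cite{fpp2} already rules out hyperbolics, so the image lands among parabolics and elliptics; ruling out a nonabelian image then amounts to showing two elements with distinct fixed points cannot both satisfy the conjugacy relation simultaneously, which should again reduce to a trace/fixed-point incompatibility via Lemma~\ref{lemma:trace}.
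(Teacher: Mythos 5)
Your verification that $K$ is a subgroup, that $T\cap G_f$ is a normal subgroup of $K$ contained in $K$, and that $K$ contains no hyperbolic elements all match the paper's proof exactly; in particular the trace-contraction argument via statement 3 of Theorem 4.4 of \cite{fpp2} against invariance of trace under conjugation is precisely what the paper does. The gap is in the cyclicity of $K/(T\cap G_f)$, which you yourself flag as the unresolved obstacle. Your proposed route --- using the defining relation $\zs_{\widetilde{\zj}}=\zv^{-1}\zs_\zj\zv$ and the explicit structure of lifts to force all $\text{PSL}(2,\bbZ)$-parts of elements of $K$ to share a fixed point on $\partial\bbH$ --- is not how the argument goes, and it is not needed. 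Once you know $K$ has no hyperbolic elements, the virtual endomorphism plays no further role: the paper proves the purely group-theoretic fact that \emph{every non-cyclic subgroup of $\text{PSL}(2,\bbZ)$ contains a hyperbolic element}, and applies its contrapositive to the image of $K$ in $\text{PSL}(2,\bbZ)$ (into which $K/(T\cap G_f)$ injects).

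That group-theoretic fact requires more than the parabolic case you sketch at the end. Using the free product decomposition $\text{PSL}(2,\bbZ)\cong(\bbZ/2\bbZ)*(\bbZ/3\bbZ)$ and Serre's results: a finite subgroup is conjugate into a free factor, hence cyclic; so a non-cyclic subgroup $H$ is infinite, meets the free kernel of $\text{PSL}(2,\bbZ)\to(\bbZ/2\bbZ)\times(\bbZ/3\bbZ)$ nontrivially, and therefore contains an element of infinite order, i.e.\ a parabolic or hyperbolic element. (Your sketch skips this entirely --- without it you cannot rule out, say, a non-cyclic subgroup consisting only of elliptic elements.) If $H$ contains a parabolic $\zh_1$ fixing $s_1$ but is not cyclic, some $\zg\in H$ moves $s_1$ to $s_2\ne s_1$, and then $\zh_2=\zg\zh_1\zg^{-1}$ is a positive parabolic fixing $s_2$; Lemma~\ref{lemma:trace} gives $|\text{tr}(\zh_1\zh_2^{-1})|=2+n^2\zi(s_1,s_2)^2>2$, so $\zh_1\zh_2^{-1}$ is hyperbolic. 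This last step is the ``trace/fixed-point incompatibility'' you anticipated, but note it is an incompatibility with the already-established absence of hyperbolic elements in $K$, not with the conjugacy relation defining $K$.
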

  \begin{proof} Using the fact that the map $\zj\mapsto
\zs_{\widetilde{\zj}}$ for $\zj\in G_f$ is a group antihomomorphism,
one verifies that $K$ is a subgroup of $G_f$.  Because the modular
group virtual multi-endomorphism of $f$ maps translations in $G_f$ to
translations, $T\cap G_f\subseteq K$.  This is a normal subgroup of
$G_f$ as well as $K$ because $T$ is a normal subgroup of $G$.
Although stated for NET maps, Theorem 4.4 of \cite{fpp2} holds in the
present more general situation.  Statement 3 of Theorem 4.4 of
\cite{fpp2} implies that if $\zs_\zj$ is hyperbolic, then the absolute
value of the trace of $\zs_{\widetilde{\zj}}$ is less than the
absolute value of the trace of $\zs_\zj$.  Because conjugation
preserves trace, it follows that $K$ contains no hyperbolic
elements. Thus to prove Lemma~\ref{lemma:fixedsgp}, it only remains to
prove that $K/(T\cap G_f)$ is cyclic.

To prove that $K/(T\cap G_f)$ is cyclic, it suffices to prove that
every subgroup of $\text{PSL}(2,\bbZ)$ which is not cyclic contains a
hyperbolic element.  So let $H$ be a subgroup of $\text{PSL}(2,\bbZ)$
which is not cyclic.  We use the fact that $\text{PSL}(2,\bbZ)$ is the
free product of $\bbZ/2\bbZ$ and $\bbZ/3\bbZ$.  (See
\cite[I.4.2]{serre}.)  The corollary in \cite[I.4.3]{serre} implies
that if $H$ is finite, then it is conjugate to a subgroup of one of
these free factors.  Since $H$ is not cyclic, it must be infinite.
Hence it has a nontrivial intersection with the kernel of a surjective
group homomorphism from $\text{PSL}(2,\bbZ)$ to $(\bbZ/2\bbZ)\times
(\bbZ/3\bbZ)$ arising from the free product decomposition of
$\text{PSL}(2,\bbZ)$.  Proposition 18 of \cite[I.4.3]{serre} implies
that this kernel is a free group.  Hence $H$ contains an element of
infinite order.  Since $H$ contains an element of infinite order, it
contains either a hyperbolic element or a parabolic element.  In the
former case we are done.  So suppose that $H$ contains a parabolic
element.  Then $H$ contains a positive parabolic element $\zh_1$ in
the sense of Lemma~\ref{lemma:trace}.  Suppose that $\zh_1$ fixes the
cusp $s_1$.  Since H is not cyclic, it contains an element $\zg$ such
that $\zg(s_1)=s_2\ne s_1$.  Then $\zh_2=\zg \zh_1\zg^{-1}$ is a
positive parabolic element of $H$ which fixes $s_2$.
Lemma~\ref{lemma:trace} now implies that the trace of a matrix in
$\text{SL}(2,\bbZ)$ which represents $\zh_1\zh_2^{-1}$ is greater than
2.  So $\zh_1\zh_2^{-1}$ is a hyperbolic element of $H$.  This proves
that every subgroup of $\text{PSL}(2,\mathbb{Z})$ which is not cyclic
contains a hyperbolic element.

\end{proof}

\section{Bounding the number of fixed points
}\label{sec:bdfixed}\nosubsections

This section is devoted to bounding the number of slope function fixed
points of a Thurston map with four postcritical points.  The statement
of the result in Theorem~\ref{thm:fixedpts} requires some definitions.

We prepare for these definitions by discussing two modular group
actions on $\overline{\bbQ}=\bbQ\cup \{\infty \}$.  Let $f$ be a
Thurston map with postcritical set $P_f$ containing exactly four
points.  Let $\zm_f\co \overline{\bbQ}\to \overline{\bbQ}\cup
\{\odot\}$ be its slope function and let $\zs_f\co \bbH\to \bbH$ be
its Thurston pullback map with respect to some marking
(Section~\ref{sec:slopefns}) of $(S^2,P_f)$.  Selinger proved in
\cite{S} that $\zs_f$ extends continuously to augmented
Teichm\"{u}ller space $\bbH\cup \overline{\bbQ}$.  If $s\in
\overline{\bbQ}$ and $\zm_f(s)\in \overline{\bbQ}$, then
$\zm_f(s)=-1/\zs_f(-1/s)$.  Let $G$ be the modular group of
$(S^2,P_f)$.  Every element $\zj\in G$ induces a pullback map
$\zm_\zj\co \overline{\bbQ}\to \overline{\bbQ}$ on slopes and a
pullback map $\zs_\zj\co \bbH\cup \overline{\bbQ}\to \bbH\cup
\overline{\bbQ}$ on augmented Teichm\"{u}ller space.  The map
$z\mapsto -1/z$ conjugates these linear fractional transformations to
each other.  So, we obtain a slope (right) action of $G$ on
$\overline{\bbQ}$ by means of the maps $\zm_\zj$ and we obtain a cusp
(right) action of $G$ on $\overline{\bbQ}$ by means of the maps
$\zs_\zj$.  The map $z\mapsto -1/z$ conjugates one action to the
other.

Now we make the aforementioned definitions.  As in the previous
section, let $G_f$ be the group of liftables for $f$ in the modular
group $G$.  We define three positive integers $C_f$, $D_f$ and $E_f$.
Proposition 3.1 of \cite{kps} implies that the index of $G_f$ in $G$
is finite.  Because $G$ acts transitively on $\overline{\bbQ}$ and
$[G:G_f]<\infty $, the group $G_f$ acts on $\overline{\bbQ}$ via
either its action on slopes or its action on cusps with the same
finite number of orbits.  Let $C_f$ be the number of these orbits.
Let $D_f$ be 2 times the least common multiple of the positive
integers less than or equal to $\deg(f)$.  If $G_f$ contains an
elliptic element of order 3, then $E_f=3$.  Otherwise, if $G_f$
contains an elliptic element of order 2, then $E_f=2$.  Otherwise,
$E_f=1$.  This notation is chosen because $C_f$ deals with
$G_f$-orbits of cusps, $D_f$ deals with the degree of $f$ and $E_f$
deals with the elliptic elements of $G_f$.  

When referring to slope function fixed points, we are referring to
elements of $\overline{\bbQ}$, not the nonslope.  This brings us to
Theorem~\ref{thm:fixedpts}.

\begin{thm}\label{thm:fixedpts}Let $f$ be a Thurston map
with exactly four postcritical points whose orbifold is hyperbolic.
Let $\zm_f$ be its slope function, and let $G_f$ be the subgroup of
liftables in its modular group $G$.
\begin{enumerate}
  \item If $\zm_f$ has more than $C_fD_fE_f$ fixed points, then $f$
has an obstruction with multiplier 1.
  \item If $f$ has an obstruction with multiplier 1 and $\zm_f$ has
more than one fixed point, then $\zm_f$ has infinitely many fixed
points.
  \item If $f$ has an obstruction with multiplier 1, then the
stabilizer in $G_f$ of this obstruction acts on the set of fixed
points of $\zm_f$ with at most $C_fD_f$ orbits.
\end{enumerate}
\end{thm}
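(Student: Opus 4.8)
The plan is to combine three ingredients: the intertwining relation between $\zm_f$ and the action of $G_f$ on $\overline{\bbQ}$, the strict form of the Lipschitz inequality in Theorem~\ref{thm:lipschitz} (available because the orbifold of $f$ is hyperbolic), and the subgroup structure supplied by Lemma~\ref{lemma:fixedsgp}. As preliminaries I would record the following. For $\zj\in G_f$ with lift $\widetilde{\zj}$, the relation $\zj[f]=[f]\widetilde{\zj}$ gives, after passing to pullback actions, $\zm_f\circ\zm_\zj=\zm_{\widetilde{\zj}}\circ\zm_f$ (and the analogous identity on cusps, under $z\mapsto-1/z$). Two consequences: (i) if $\zj\in G_f$ fixes a slope $s$ that is itself fixed by $\zm_f$, then $\widetilde{\zj}$ fixes $s$; and (ii) the multiplier of a slope is constant on each $G_f$-orbit of $\overline{\bbQ}$, since a $G_f$-element is a homeomorphism intertwining $f$ with one of its lifts, and therefore transports the component-degree data of $f^{-1}(\zg)$ to that of $f^{-1}(\zj\zg)$. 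Applying Theorem~\ref{thm:lipschitz} to distinct fixed points $s_1\ne s_2$ of $\zm_f$ with multipliers $\zr_1,\zr_2$ gives $\zr_1\zr_2<1$; hence at most one fixed point of $\zm_f$ has multiplier $\ge1$, and within a single $G_f$-orbit all fixed points of $\zm_f$ share a common multiplier (necessarily $<1$ as soon as there are two of them). Finally, by Theorem~7.1 of \cite{cfpp}, every slope $s$ carries an element $\zt_s\in G_f$ with $\zm_{\zt_s}$ equal to the $n_s$-th power of the positive generator of $\mathrm{Stab}_{\mathrm{PSL}(2,\bbZ)}(s)$, where the least admissible $n_s$ is the least common multiple of the degrees with which $f$ maps the components of $f^{-1}(\zg_s)$ onto $\zg_s$; in particular $n_s$ is $G_f$-invariant and divides $\mathrm{lcm}\{1,\dots,\deg f\}=D_f/2$, while $\zm_{\widetilde{\zt_s}}$ is the $(n_s\zr_s)$-th power of the positive generator of $\mathrm{Stab}(\zm_f(s))$.

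For statements (2) and (3), let $\zg_0$ be an obstruction of multiplier $1$, with slope $s_0$. Take $\zt$ to be the $\ell$-th power of a primitive Dehn twist about $\zg_0$, where $\ell$ is the least common multiple of the degrees $d_i$ of the components $\widetilde{\zg}_i$ of $f^{-1}(\zg_0)$; then $\zt\in G_f$ and one may take $\widetilde{\zt}=\zt$, because a lift of $\zt$ is the product of the twists about the $\widetilde{\zg}_i$ raised to the powers $\ell/d_i$, and since each $\widetilde{\zg}_i$ is homotopic to $\zg_0$ and $\sum\ell/d_i=\ell$, this product is homotopic to $\zt$. For any fixed point $s$ of $\zm_f$, the intertwining relation together with $\widetilde{\zt}=\zt$ gives $\zm_f(\zm_\zt(s))=\zm_\zt(\zm_f(s))=\zm_\zt(s)$, so the whole $\langle\zm_\zt\rangle$-orbit of $s$ lies in $\mathrm{Fix}(\zm_f)$; since $s_0\in\mathrm{Fix}(\zm_f)$ and $\zm_\zt$ is a nontrivial parabolic whose only fixed point is $s_0$, the existence of a second fixed point forces infinitely many, which is (2). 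For (3), set $H=\mathrm{Stab}_{G_f}(s_0)$. Its image in $\mathrm{PSL}(2,\bbZ)$ lies in the cyclic parabolic stabilizer of $s_0$ and contains $\zm_\zt\ne1$, so $H/(H\cap T)$ is infinite cyclic; using that $\zj\mapsto\zs_{\widetilde{\zj}}$ is an antihomomorphism carrying translations to translations, together with $\widetilde{\zt}=\zt$ and preliminary (i) applied to a generator of $H/(H\cap T)$, one computes $\zs_{\widetilde{\zj}}=\zs_{\zj}$ for every $\zj\in H$, so $H$ is contained in the group $K$ of Lemma~\ref{lemma:fixedsgp} taken with $\zv=\mathrm{id}$; the intertwining relation then shows that $H$ permutes $\mathrm{Fix}(\zm_f)$. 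As $G_f$ has $C_f$ orbits on $\overline{\bbQ}$, the set $\mathrm{Fix}(\zm_f)$ meets at most $C_f$ of them, and I would bound the number of $H$-orbits inside each such orbit $O$ by $D_f$: identifying $O$ with $\mathrm{Stab}_{G_f}(s_1)\backslash G_f$ for a chosen $s_1\in O\cap\mathrm{Fix}(\zm_f)$, a coset meets $\mathrm{Fix}(\zm_f)$ exactly when the element $\zs_{\zj}^{-1}\zs_{\widetilde{\zj}}$ fixes the cusp $-1/s_1$ and hence lies in the cyclic parabolic group there, and the count of such cosets modulo $H$ is controlled by the integer $n_{s_1}\mid D_f/2$ together with a factor $2$ arising from the passage through $T$. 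Summing over the orbits gives the bound $C_fD_f$ of (3).

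For statement (1): if some fixed point of $\zm_f$ has multiplier $1$ it is an obstruction of multiplier $1$ and there is nothing to prove, so assume every fixed point of $\zm_f$ has multiplier $\ne1$, and bound $|\mathrm{Fix}(\zm_f)|$ by $C_fD_fE_f$. As in (3) it suffices to bound $|O\cap\mathrm{Fix}(\zm_f)|$ by $D_fE_f$ for each of the at most $C_f$ orbits $O$ that it meets. Fix $s_0\in O\cap\mathrm{Fix}(\zm_f)$, with common multiplier $\zr\ne1$ and common integer $n=n_{s_0}\mid D_f/2$; exactly as in (3), the points of $O\cap\mathrm{Fix}(\zm_f)$ correspond to the cosets $\mathrm{Stab}_{G_f}(s_0)\zj$ for which $\zs_{\zj}^{-1}\zs_{\widetilde{\zj}}$ fixes $-1/s_0$, and I would package these cosets by means of a suitable $\zv=\zv(s_0)$ so that their number is controlled by the group $K=K_\zv$ of Lemma~\ref{lemma:fixedsgp}. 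The decisive new point is that now $K$ contains \emph{no parabolic element either}: a parabolic element of $K$ would, via the intertwining relation, produce a fixed point of $\zm_f$ of multiplier $1$, contrary to assumption. Hence by Lemma~\ref{lemma:fixedsgp} the quotient $K/(T\cap G_f)$ is \emph{finite} cyclic, generated by an elliptic element, whose order therefore divides $E_f$; combining $|K/(T\cap G_f)|\le E_f$ with $n\mid D_f/2$ and the factor $2$ from $T$ gives $|O\cap\mathrm{Fix}(\zm_f)|\le D_fE_f$, and summing over orbits proves (1).

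The step I expect to be the main obstacle is the orbit-counting in parts (3) and (1): converting the qualitative output of Lemma~\ref{lemma:fixedsgp} — that $K/(T\cap G_f)$ is cyclic and that $K$ omits hyperbolic elements (and, in the setting of (1), parabolic elements as well) — into the precise constants $C_fD_f$ and $C_fD_fE_f$. This requires carefully tracking how the integers $n_s$ (the smallest powers for which the relevant Dehn twists are liftable) and the factor $2$ coming from the translation subgroup $T$ enter the count, and matching the set of admissible cosets with a coset space of the appropriate group $K_\zv$; forcing the resulting constant to be exactly $D_fE_f$, and no larger, is the delicate part.
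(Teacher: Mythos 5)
Your proposal is correct and takes essentially the same route as the paper: for statement 1 the paper runs the pigeonhole version of your argument (more than $C_fD_fE_f$ fixed points forces, after sorting into the $C_f$ orbits and normalizing the exponent $k$ with $\zs_{\zj_n}^{-1}\zs_{\widetilde{\zj}_n}=\zs_\zt^k$ into fewer than $D_f$ classes, the image of the group $K$ of Lemma~\ref{lemma:fixedsgp} with $\zv=1$ to exceed $E_f$ elements and hence be infinite and parabolic-generated, producing a multiplier-1 obstruction), which is exactly the contrapositive of your direct count, and your treatments of statements 2 and 3 match the paper's. The one inaccuracy worth noting is that the factor $2$ in $D_f$ does not arise ``from $T$'' (which acts trivially on cusps) but from the fact that the positive generator of the parabolic stabilizer is realized by a half-twist $\zt$, so the liftable power is $\zt^{2d}$ with lift $\zt^{2c}$ and the exponent is only determined modulo $2(d-c)\le 2d\le D_f$.
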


\begin{remark}\label{remark:multiplier1} We note that $f$ has an
obstruction with multiplier 1 if and only if there exists a Dehn twist
in the modular group of $f$ which commutes with $f$ up to homotopy
relative to the postcritical set of $f$.
\end{remark}

  \begin{proof} Rather than working directly with slopes, we work with
cusps, that is, negative reciprocals of slopes in the boundary of the
upper half-plane $\bbH$.  Let $\zs_f$ be the Thurston pullback map of
$f$ on $\bbH$.  Similarly, we let $\zs_\zj$ denote the pullback map of
$\zj\in G$.  We use the action of $G$ on $\overline{\bbQ}$ which
arises from the group antihomomorphism $\zj\mapsto \zs_{\zj}$.  As
discussed before Lemma~\ref{lemma:fixedsgp} in the previous section,
the assignment $\zj\mapsto \zs_{\widetilde{\zj}}$ is a function, a
group antihomomorphism, even when the modular group virtual
multi-endomorphism $\zj\mapsto \widetilde{\zj}$ is not a function.

To prove statement 1, suppose that $\zm_f$ has more than $C_fD_fE_f$
fixed points.  Then $\zs_f$ fixes more than $C_fD_fE_f$ cusps in
$\overline{\bbQ}$.  Since the cusp action of $G_f$ on
$\overline{\bbQ}$ has $C_f$ orbits, one of these orbits contains more
than $D_fE_f$ cusps $t_1,t_2,t_3,\dotsc,t_N$ fixed by $\zs_f$.  So $N>
D_fE_f$.  Since all obstructions are homotopic, we may, and do, choose
$t_1$ so that its negative reciprocal is not the slope of an
obstruction.

Since $t_1,\dotsc,t_N $ are in the same orbit for the action of $G_f$
on $\overline{\bbQ}$, there exist elements $\zj_1,\dotsc,\zj_N$ in
$G_f$ such that $\zs_{\zj_n}(t_1)=t_n$ for every $n\in\{1,\ldots,N\}$.
Let $n\in\{1,\ldots,N\}$.  Then
  \begin{equation*}
t_n=\zs_f(t_n)=\zs_f(\zs_{\zj_n}(t_1))=
\zs_{\widetilde{\zj}_n}(\zs_f(t_1))=\zs_{\widetilde{\zj}_n}(t_1).
  \end{equation*}
So $\zs_{\zj_n}^{-1}\zs_{\widetilde{\zj}_n}$ fixes $t_1$.  The image
in $\text{PSL}(2,\bbZ)$ of the stabilizer of $t_1$ in $G$ is a cyclic
(maximal parabolic) subgroup of $\text{PSL}(2,\bbZ)$.  Let $\zt$ be
the square root of a Dehn twist in $G$ such that $\zs_\zt$ generates
this cyclic subgroup.  Then
$\zs_{\zj_n}^{-1}\zs_{\widetilde{\zj}_n}=\zs_{\zt}^k$ for some integer
$k$.

In this paragraph we normalize $\zj_n$ to bound the absolute value of
$k$ independently of $n$.  Let $P_f$ be the postcritical set of $f$.
Let $\zg$ be a simple closed curve in $S^2\setminus P_f$ with slope
$t_1$.  Consider the connected components of $f^{-1}(\zg)$.  Let $d$
be the least common multiple of the degrees with which $f$ maps them
to $\zg$.  The multiplier $\zr$ of $t_1$ is the sum of the reciprocals
of these degrees for which the associated connected component of
$f^{-1}(\zg)$ is neither peripheral nor null homotopic.  So $\zr
d=c\in \bbZ$.  Then, as in Theorem 7.1 of \cite{cfpp}, $\zt^{2d}\in
G_f$ and $\widetilde{\zt}^{2d}=\zt^{2c}$.  We maintain
$n\in\{1,\ldots,N\}$ and the integer $k$ in the previous paragraph.
Let $\zv_n=\zt^{2dr}\zj_n$ for an integer $r$, soon to be made
precise.  Then $\zv_n\in G_f$,
  \begin{equation*}
\zs_{\zv_n}(t_1)=\zs_{\zj_n}(\zs_{\zt^{2dr}}(t_1))=\zs_{\zj_n}(t_1)=t_n
  \end{equation*}
and
  \begin{equation*}
\zs_{\zv_n}^{-1}\zs_{\widetilde{\zv}_n}=
\zs_{\zt^{2dr}}^{-1}\zs_{\zj_n}^{-1}
\zs_{\widetilde{\zj}_n}\zs_{\zt^{2cr}}=
\zs_{\zt^{2dr}}^{-1}\zs_{\zt}^k\zs_{\zt^{2cr}}=\zs_{\zt}^{k+2r(c-d)}.
  \end{equation*}
Because $t_1$ is not the negative reciprocal of an obstruction, $d>c$.
In particular, $c-d\ne 0$.  This shows that by replacing $\zj_n$ by
$\zt^{2dr}\zj_n$ for the appropriate choice of $r$, we have that $0\le
k<2(d-c)$.  We do this so that this inequality holds for every
$n\in\{1,\ldots,N\}$.

Clearly $2(d-c)<2d\le D_f$.  We also have that $N> D_fE_f$.  So there
exists an integer $k$ and a subset $\cN$ of $\{1,\dotsc,N\}$ with
$\left|\cN\right|>E_f$ such that
$\zs_{\zj_n}^{-1}\zs_{\widetilde{\zj}_n}=\zs_{\zt}^k$ for every $n\in
\cN$.

Now let $m,n\in \cN$.  Let $\zj=\zj_m^{-1} \zj_n$.  Then
  \begin{equation*}
\zs_{\widetilde{\zj}}=\zs_{\widetilde{\zj}_n}\zs_{\widetilde{\zj}_m}^{-1}=
\zs_{\zj_n}\zs_{\zt}^k \zs_{\zt}^{-k}\zs_{\zj_m}^{-1}=
\zs_{\zj_n}\zs_{\zj_m}^{-1}=\zs_\zj.
  \end{equation*}
It follows that $\zj_m^{-1}\zj_n$ belongs to the set $K$ of all
elements $\zj\in G_f$ such that $\zs_{\widetilde{\zj}}=\zs_\zj$ for
every $m,n\in \cN$.  Lemma~\ref{lemma:fixedsgp} with $\zv=1$ implies
that $K$ is a subgroup of $G$ and that the image of $K$ in
$\text{PSL}(2,\bbZ)$ is a cyclic group which contains no hyperbolic
elements.  Since $\left|\cN\right|>E_f$, this image is infinite,
generated by a parabolic element.  Let $\zf$ be an element of $K$
which maps to this parabolic element in the image of $K$.  Then $\zf$
fixes the homotopy class of a simple closed curve $\zg$ in
$S^2\setminus P_f$.  Because $\zs_{\widetilde{\zf}}=\zs_\zf$, the
curve $\zg$ is $f$-stable and its multiplier is 1.  Thus $\zg$ is an
obstruction with multiplier 1.  This proves statement 1 of
Theorem~\ref{thm:fixedpts}.

Minor modifications of the above argument prove statement 3.

We turn our attention to statement 2.  Suppose that $f$ has an
obstruction with multiplier 1 and more than one slope function fixed
point.  Let $t_0$ be the negative reciprocal of the obstruction's
slope.  Then $\zs_f$ fixes $t_0$ and one other extended rational
number $t$.  The group $G_f$ contains the homotopy class $\zt$ of a
(nonzero power of a primitive) Dehn twist about the obstruction.
Because the obstruction's multiplier is 1, this homotopy class is
fixed by the modular group virtual multi-endomorphism $\zj\mapsto
\widetilde{\zj}$ of $f$.  So
  \begin{equation*}
\zs_f\zs_\zt=\zs_{\widetilde{\zt}}\zs_f=\zs_\zt \zs_f.
  \end{equation*}
Hence for every integer $n$ we have that 
  \begin{equation*}
\zs_f(\zs_\zt^n(t))=\zs_\zt^n(\zs_f(t))=\zs_\zt^n(t).
  \end{equation*}
Thus $\zs_f$ fixes every element in the orbit of $t$ under the action
of $\left<\zs_\zt\right>$ on the set of extended rational numbers.
This implies that $\zm_f$ has infinitely many fixed points.  

This completes the proof of Theorem~\ref{thm:fixedpts}.

\end{proof}

\section{NET maps with many formal matings}
\label{sec:matings}\nosubsections

Using \cite{fpp2}, Bill Floyd enumerated all possible NET map dynamic
portraits \cite[Section 9]{fpp2} through degree 40 and ran the
computer program {\tt NETmap} on NET maps having these dynamic
portraits.  The results are in the NET map website \cite{NET}.  These
computations revealed that some NET maps have many formal matings.
The examples presented in this section are based on these
computations.

To describe these NET maps, let $n$ be an integer with $n\ge 4$.  Let
$f_n$ be the NET map with the following presentation (for which see
\cite{fpp1}).  The associated affine map $\zF(x)=Ax+b$ has matrix
$A=\left[\begin{smallmatrix}n & -1 \\ 0 & 1\end{smallmatrix}\right]$
and translation term $b=(n,0)$.  There is a green line segment joining
$(0,0)$ and $(1,0)$.  There is a green line segment joining $(n,0)$
and $(2,0)$.  The green line segments containing $(-1,1)$, $(n-1,1)$
and $(2n-1,1)$ are trivial.  This defines a NET map $f_n$ with degree
$n$ up to Thurston equivalence.  The presentation diagram for $f_5$ is
shown in Figure~\ref{fig:prendgm}.

  \begin{figure}
\centerline{\includegraphics{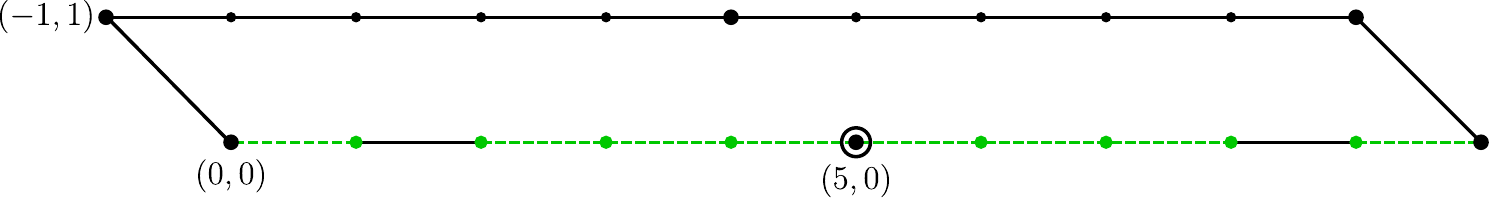}}
\caption{The presentation diagram for $f_5$}
\label{fig:prendgm}
  \end{figure}

\begin{thm}\label{thm:matings}Let $n$ be an integer with
$n\ge 4$.  Then the map $f_n$ is Thurston equivalent to a rational map
and it can be expressed as a formal mating in at least $\left\lceil
\frac{n}{2}\right\rceil$ ways.  The slopes of $\left\lceil
\frac{n}{2}\right\rceil$ mating equators have the form
$\frac{2m}{n-2m-1}$, where $m$ is an integer with $0\le m\le
\left\lceil \frac{n-2}{2}\right\rceil $.
\end{thm}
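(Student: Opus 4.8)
The plan is to attack the statement in two stages. First I would show that $f_n$ is equivalent to a rational map by verifying the hypotheses of Theorem~\ref{thm:omit2}: I need to exhibit a core arc $\za$ for $(S^2,P_{f_n})$, say of slope $s=\infty$ or $s=0$, together with a lift in $f_n^{-1}(\za)$ of the same slope mapping with degree $1$, and I need to check that neither elementary divisor of $f_n$ equals $1$ and that $s$ is not the slope of an obstruction. The elementary-divisor condition should fall out directly from the matrix $A=\left[\begin{smallmatrix}n & -1 \\ 0 & 1\end{smallmatrix}\right]$ and the lattice $\zL_1$ determined by the presentation; the degree-$1$ lift should be visible from the green line segments joining $(0,0)$ to $(1,0)$ and $(n,0)$ to $(2,0)$, using the visual/geometric interpretation of slope-function evaluation reviewed in Section~\ref{sec:eval} (a ``photon'' path that encounters no spin mirrors on a short initial segment yields a degree-$1$ lift of a core arc). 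Once Theorem~\ref{thm:omit2} applies, $f_n$ is Thurston equivalent to a rational map and, moreover, every multiplier that could obstruct is $<1$, which is exactly what we need for the mating count.

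Second, and this is the combinatorial heart, I would identify the candidate equator slopes $s_m=\frac{2m}{n-2m-1}$ for $0\le m\le \left\lceil\frac{n-2}{2}\right\rceil$ and show each is a fixed point of $\zm_{f_n}$ with multiplier $1$, hence (since its $1\times 1$ Thurston matrix has spectral radius $1$, not $>1$) the slope of a genuine mating equator giving rise to a formal mating. The key step is a direct slope-function computation: choosing a line segment $S$ of slope $s_m$ in $\bbR^2$ with endpoints $v$ and $w=v+2d(s_m)(n-2m-1,2m)$, tracking which spin mirrors (the dashed segments through $(-1,1)$, $(n-1,1)$, $(2n-1,1)$ — but here those are trivial — so really the green segments along the $x$-axis) the segment crosses, applying the reflection formula $w'=w+2\sum(-1)^{i+1}\zl_i$ of Theorem 5.1 of \cite{cfpp}, and confirming that the slope of $\overline{vw'}$ in the presentation basis is again $s_m$. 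I would also compute $d(s_m)$ and $c(s_m)$ to confirm the multiplier $c(s_m)/d(s_m)$ equals $1$. The bookkeeping here is where the condition $n\ge 4$ and the ceiling $\left\lceil\frac{n}{2}\right\rceil$ will emerge naturally: the admissible range of $m$ is dictated by when the segment genuinely crosses the right configuration of mirrors and stays non-peripheral.

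Finally I would argue that these $\left\lceil\frac{n}{2}\right\rceil$ equator slopes are pairwise distinct — immediate, since the map $m\mapsto \frac{2m}{n-2m-1}$ is injective on the relevant range — and that distinct equator slopes of a rational Thurston map yield distinct formal matings (a mating equator determines its slope, and Section~\ref{sec:matings}'s setup, following \cite{bekmprt}, associates at most one formal mating to each equator up to the relevant equivalence). Counting the values of $m$ from $0$ to $\left\lceil\frac{n-2}{2}\right\rceil$ gives $\left\lceil\frac{n-2}{2}\right\rceil+1=\left\lceil\frac{n}{2}\right\rceil$ matings, matching the claim.

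I expect the main obstacle to be the explicit verification that each $s_m$ is fixed by $\zm_{f_n}$ with multiplier exactly $1$: this requires carefully setting up the presentation of $f_n$ (the lattice $\zL_1$, its basis, the spin mirrors, the push map $h$ with $f_n=h\circ g$) and then executing the Theorem 5.1 of \cite{cfpp} computation uniformly in $n$ and $m$, rather than case by case. A secondary subtlety is making sure the lifts producing the fixed slope are neither peripheral nor null homotopic for every $m$ in the stated range — i.e.\ that the count $\left\lceil\frac{n}{2}\right\rceil$ is sharp and not an overcount — which is again a matter of tracking the geometry of $S$ relative to $\zL_1$ and $P_{f_n}$. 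The equivalence-to-rational part and the distinctness/counting part are comparatively routine once these computations are in hand.
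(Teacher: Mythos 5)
Your proposal has two steps that would fail as written. First, the rationality argument via Theorem~\ref{thm:omit2} cannot get off the ground: that theorem requires that \emph{neither} elementary divisor of $f_n$ equals $1$, but for $f_n$ the lattice $\zL_1$ is generated by $(n,0)$ and $(-1,1)$, so $\bbZ^2/\zL_1\cong \bbZ/n\bbZ$ and the elementary divisors are $1$ and $n$. The paper instead uses Theorem~\ref{thm:omit1} (which has no elementary divisor hypothesis) with the core arc from $(1,0)$ to $(2,0)$ to force any obstruction to have $c(t)=d(t)=1$, then applies Theorem~\ref{thm:nmlip3} twice (with core arcs of slopes $0$ and $2$) to confine the candidate obstruction slopes to $\{0,1,2,\infty\}$, and finally rules these out by computing orders in $\bbZ^2/\zL_1$; some such replacement is needed in your first stage.

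Second, and more seriously, your equator criterion is wrong. A mating equator is a curve $\zg$ whose \emph{full} preimage $f_n^{-1}(\zg)$ is a single connected simple closed curve homotopic to $\zg$ and mapped to it orientation-preservingly; equivalently $d(s)=n$ and $c(s)=1$, so the multiplier of an equator slope is $1/n$, not $1$. A fixed slope with multiplier exactly $1$ is an obstruction (Thurston's criterion for a hyperbolic orbifold requires multipliers strictly less than $1$), so if your plan to ``confirm the multiplier $c(s_m)/d(s_m)$ equals $1$'' succeeded it would prove $f_n$ is \emph{not} rational, contradicting your first stage. What actually has to be verified for each $s_m=\frac{2m}{n-2m-1}$ is: (i) $d(s_m)=n$, which follows because $p+q=n-1$ is coprime to $n$ so the image of $(q,p)$ has order $n$ in $\bbZ^2/\zL_1$; (ii) $\zm_{f_n}(s_m)=s_m$, which is the photon computation you correctly anticipate as the technical heart; and (iii) the orientation condition, which the paper checks by showing $f_n$ fixes a postcritical point (this is why the translation term $(n,0)$ was chosen). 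With the criterion corrected, the rest of your outline (injectivity of $m\mapsto s_m$ and the count $\left\lceil\frac{n-2}{2}\right\rceil+1=\left\lceil\frac{n}{2}\right\rceil$) is fine.
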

  \begin{proof} To prove that $f_n$ is Thurston equivalent to a
rational map, we apply W. Thurston's characterization theorem.
The map $f_n$ has a hyperbolic orbifold.  So it suffices to prove that
$f_n$ has no obstruction.

We proceed by contradiction.  Suppose that $f_n$ has an obstruction
$\zd$ with slope $t$.  We maintain the notation $c(t)=c(\zd)$ and
$d(t)=d(\zd)$ from early in Section~\ref{sec:nmlip}.  By assumption,
$\zm_{f_n}(t)=t$ and $c(t)/d(t)\ge 1$, where $\zm_{f_n}$ is the slope
function of $f_n$.  Although $\zm_{f_n}(0)=0$, it is clear that
$c(0)=1$ and $d(0)=n$.  Hence $t\ne 0$.

Now we apply Theorem~\ref{thm:omit1}.  We take $\za$ there to be the
image in $S^2$ of the line segment joining $(1,0)$ and $(2,0)$, so
that $\za$ has slope $s=0$.  We may, and do, take $\widetilde{\za}=\za$.
Then $d(\widetilde{\za})=1$.  Theorem~\ref{thm:omit1} implies that
$c(\zd)=1$.  Moreover, $d(\zd)=1$ because $c(\zd)/d(\zd)\ge 1$.

Now we apply Theorem~\ref{thm:nmlip3} with the same $\za$ and $\zd$.
We easily conclude that $\zi(\za,\zd)\le 2$.  So if we write
$t=\frac{p}{q}$, where $p$ and $q$ are relatively prime integers, then
$\left|p\right|\le 2$.  We next apply Theorem~\ref{thm:nmlip3} with
$\za$ there equal to the image of the line segment with slope 2
joining $(-1,0)$ and $(0,2)$.  We again may take $\widetilde{\za}=\za$
with $d(\za)=1$.  As before, $\zi(\za,\zd)\le 2$.  Thus
$\left|p-2q\right|\le 2$.  The inequalities $\left|p\right|\le 2$ and
$\left|p-2q\right|\le 2$ imply that $t\in \{0,1,2,\infty \}$.

Now we apply Theorem 4.1 of \cite{cfpp}.  It implies that $d(t)$ is
the order of the image of $(q,p)$ in the group $\bbZ^2/\zL_1$,
where
  \begin{equation*}
\zL_1=\{a(n,0)+b(-1,1):a,b\in \bbZ\}=\{(a,b)\in \bbZ^2:a+b\equiv
0\text{ mod } n\}.
  \end{equation*}
So the condition $d(t)=1$ implies that $(q,p)\in \zL_1$.  Since $n\ge
4$, this and the last paragraph imply that $f_n$ has no obstruction
and is therefore Thurston equivalent to a rational map.

We next begin to prove the statement about mating equators.  We will
do this by applying a criterion in Theorem 4.8 of Meyer's paper
\cite{m}.  This criterion goes back, at least in degree 2, to Theorem
7.2.1 of Wittner's thesis \cite{w}, where it is attributed to
W. Thurston.  It implies that formal matings arise from equators.  Let
$P_{f_n}$ be the postcritical set of $f_n$.  An equator for $f_n$ is a
simple closed curve $\zg$ in $S^2\setminus P_{f_n}$ which is neither
peripheral nor null homotopic such that $f_n^{-1}(\zg)$ is a simple
closed curve homotopic to $\zg$ relative to $P_{f_n}$ which $f$ maps
to $\zg$ in an orientation-preserving manner.  The condition that
$f_n^{-1}(\zg)$ is a simple closed curve is equivalent to the
condition that $d(s)=n$, where $s$ is the slope of $\zg$.  The
condition that $f_n^{-1}(\zg)$ is homotopic to $\zg$ relative to
$P_{f_n}$ is equivalent to the condition that $\zm_{f_n}(s)=s$.  The
orientation condition is satisfied if $f_n$ fixes an element of
$P_{f_n}$.  So to prove that a simple closed curve $\zg$ in
$S^2\setminus P_{f_n}$ with slope $s$ is an equator, it suffices to
prove that $d(s)=n$, $\zm_{f_n}(s)=s$ and that $f_n$ fixes an element
of $P_{f_n}$.

The last of these requirements is the easiest to check.  As for
general NET maps, we have the group of Euclidean isometries
  \begin{equation*}
\zG_1=\{x\mapsto 2\zl\pm x:\zl\in \zL_1\}.
  \end{equation*}
A straightforward computation shows that $f_n$ fixes all of its
postcritical points except the image of $(n-1,1)$ in $\bbR^2/\zG_1$
when $n$ is even.  When $n$ is odd, $f_n$ also fixes the image of
$(n-1,1)$ in $\bbR^2/\zG_1$.

For the remaining requirements, let $m$ be an integer with $0\le m\le
\left\lceil \frac{n-2}{2}\right\rceil $.  Let $p=2m$ and let
$q=n-2m-1$.  Setting $s=\frac{p}{q}$, we will show that $d(s)=n$ and
$\zm_{f_n}(s)=s$.

We show that $d(s)=n$ in this paragraph.  Let $p'$ and $q'$ be
relatively prime integers such that $\frac{p'}{q'}=\frac{p}{q}$.  As
in four paragraphs above, Theorem 4.1 of \cite{cfpp} implies that
$d(s)$ is the least positive integer such that $d(s)(p'+q')\equiv
0\text{ mod } n$.  But $\gcd(p'+q',n)=1$ because $p'+q'$ divides
$p+q=n-1$.  Thus $d(s)=n$.

We finally prove that $\zm_{f_n}(s)=s$.  To do this, we use the visual
interpretation of slope function evaluation in Section~\ref{sec:eval}.
So we have a line segment $S$ in $\bbR^2$ with slope $s$ and endpoints
$v$ and $w$ such that $S$ maps to a simple closed curve in
$\bbR^2/\zG_1$.  We assume that $S$ contains no elements of $\bbZ^2$.
We are led to the action of $2\zL_1$ on $\bbR^2$.  The closed
parallelogram with corners $(0,0)$, $(2n,0)$, $(-1,1)$ and $(2n-1,1)$
is a fundamental domain for $\zG_1$.  Doubling this, we obtain the
closed parallelogram with corners $(0,0)$, $(2n,0)$, $(-2,2)$ and
$(2n-2,2)$, which is a fundamental domain for $2\zL_1$.  The
translates of the latter parallelogram by elements of $2\zL_1$ form a
tesselation $\cT$ of $\bbR^2$.

In the visual interpretation of the computation of $\zm_{f_n}(s)$, a
photon starts at $v$ and traverses an initial segment of $S$ until it
hits a spin mirror.  Our nontrivial spin mirrors have two sizes, small
and large.  Whether small or large, the photon spins about the center
of this spin mirror.  Then it travels parallel to $S$ but in the
direction opposite to the direction that it was traveling just before
it hit the spin mirror.  It continues in this way.  Let $P$ be the
photon's path, the set of points traversed by the photon.  There
exists a canonical map $\zf\co P\to S$.

To analyze $P$, we use the basis $B$ of $\bbR^2$ consisting of $(1,0)$
and $(-1,1)$.  We use the notation $(x,y)_B$ to denote the point
$x(1,0)+y(-1,1)\in \bbR^2$.  So for every $(x,y)\in \bbR^2$, we have
that $(x,y)=(x+y,y)_B$.  Since the vector $(q,p)$ gives the direction
of $S$, the slope of $S$ with respect to $B$ is
$\frac{p}{p+q}=\frac{p}{n-1}$.

We put conditions on $v$ and $w$ in this paragraph.  It is easy to see
that $\zm_f(0)=0$.  So we assume that $s\ne 0$.  The line segment
joining $(1,0)$ and $(2,0)$ in $\bbR^2$ maps to a core arc for
$(S^2,P_{f_n})$ with slope 0.  Since $s\ne 0$, every simple closed
curve in $S^2\setminus P_f$ with slope $s$ meets this core arc in its
interior.  It follows that we may choose $v$ so that $v=(x,0)_B$ with
$1<x<2$.  We choose $w$ so that its second $B$-coordinate is positive.

We say that real numbers $x$ and $y$ are congruent modulo an integer
$k$ if and only if $x-y\in k \bbZ$.  If $u\in P$ and $\zf(u)$ are
moving in the same direction, then their first $B$-coordinates are
congruent modulo $2n$ and their second $B$-coordinates are congruent
modulo 4.  If $u$ and $\zf(u)$ are moving in opposite directions, then
their first $B$-coordinates are congruent to minus each other modulo
$2n$ and their second $B$-coordinates are congruent to minus each
other modulo 4.  In particular, a point $u\in P$ is in a spin mirror
if and only if $\zv(u)$ is in a spin mirror.

We are now ready to analyze $P$.  The analysis separates into two
cases.
\smallskip

\noindent\textsl{Case 1.} The photon enters a parallelogram $T\in \cT$
at a point $u_0\in P$ which is in the gap between spin mirrors in the
lower left corner of $T$.  This is the case when $u_0=v$.  This case
is illustrated in the top diagram of Figure~\ref{fig:prlgs}.  For both
of the diagrams in Figure~\ref{fig:prlgs}, the photon enters the
parallelogram at $A$, travels to $B$, spins to $B'$ and continues in
this way until it exits the parallelogram at either $G$ or $H$.  Let
$u\in P$ be the photon's position, and suppose that $\zf(u)=(x,y)_B$.
As $\zf(u)$ moves toward $w$ from $\zf(u_0)=(x_0,y_0)_B$, the photon
remains in $T$ until one of three conditions is met.  One condition is
that $y\in 2\bbZ$ and $\pm x$ is congruent modulo $2n$ to a real
number between 0 and 1.  Another condition is that $y\in 2\bbZ$ and
$\pm x$ is congruent modulo $2n$ to a real number between 1 and 2.
The remaining condition is that $x$ is congruent to 0 modulo $2n$.  If
the first condition is met, then the photon leaves $T$ by spinning
about a small spin mirror.  If the second condition is met, then the
photon leaves $T$ through either its top or bottom between two spin
mirrors.  If the third condition is met, then the photon leaves $T$
through a side of $T$.

Consider what happens when the second $B$-coordinate of $\zf(u)$
increases from $y_0$ by a nonnegative real number $r$.  Since the
slope of $S$ with respect to $B$ is $\frac{p}{n-1}$, as the second
$B$-coordinate of $\zf(u)$ increases by $r$, its first $B$-coordinate
increases by $\frac{n-1}{p}r$.  We have that $1\le p\le 2\left\lceil
\frac{n-2}{2}\right\rceil\le n-1$, so $\frac{n-1}{p}\ge 1$.  In
particular, when $r=2$ the first $B$-coordinate of $\zf(u)$ is at
least 3.  So the photon does not leave $T$ through the gap between the
spin mirrors in the upper left corner of $T$ when $r=2$.  Furthermore,
if $r\le 2p-2$, then
  \begin{equation*}
x=x_0+\frac{n-1}{p}r\le x_0+2(n-1)-2\frac{n-1}{p}\le x_0+2n-4.
  \end{equation*}
So for such values of $r$, the photon remains in $T$.  Because $p=2m$,
we have that $2p-2\equiv 2\text{ mod } 4$.  Hence the photon is in the
top of $T$ when $r=2p-2$.  It is in the large spin mirror in the top
of $T$.  When $r=2p$, we have that $x=x_0+2n-2$.  So the photon is in
a small spin mirror in the bottom of $T$.  This spin mirror must be
the one in the lower left corner of $T$, as in the top diagram in
Figure~\ref{fig:prlgs}.  The photon then leaves $T$ by spinning about
this spin mirror.  It quickly re-enters $T$ through the left side of
$T$.  Now there are essentially two possibilities.  It might proceed
directly to the gap between the spin mirrors in the upper left corner
of $T$.  We are back in Case 1.  Otherwise, we note that $p=2m$,
$q=n-2m-1$ and $\frac{p}{q}=s$.  We have dismissed the case in which
$s=0$.  So $p\ge 2$ and $q\le n-3$, which implies that
$\frac{p}{q}>\frac{2}{n-2}$.  Hence if we do not return to Case 1,
then the photon proceeds directly to the left half of the large spin
mirror in the top of $T$.  This puts us in Case 2.
\smallskip

  \begin{figure}
\centerline{\includegraphics{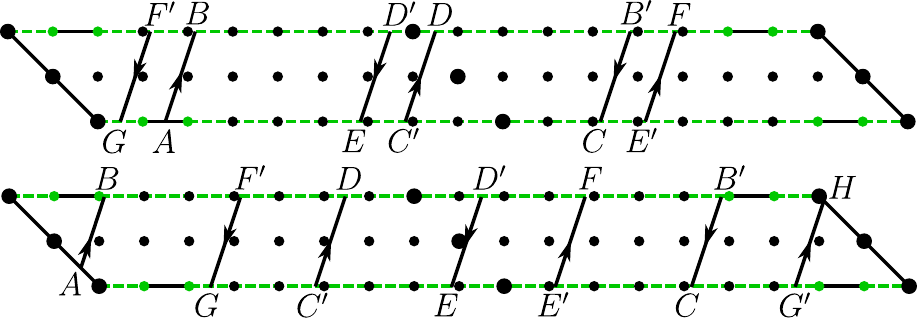}}
\caption{The two main photon behaviors within parallelograms}
\label{fig:prlgs}
  \end{figure}

\textsl{Case 2.} The photon enters a parallelogram $T\in \cT$ through
its left side and proceeds directly to the left half of the large spin
mirror in the top of $T$.  This case is illustrated in the second
diagram in Figure~\ref{fig:prlgs}.  Suppose that the photon enters $T$
at the point $u_1$.  Suppose that $\zf(u_1)=(x_1,y_1)_B$.  Then $x_1$
is congruent to 0 modulo $2n$.  From $u_1$ the photon proceeds
directly to a point $u_2$ in the top of $T$.  Suppose that
$\zf(u_2)=(x_2,y_2)_B$.  Then $x_2$ is congruent modulo $2n$ to a real
number between 2 and $n$.  We argue as in Case 1.  We allow the second
$B$-coordinate of $\zf(u)$ to increase by a nonnegative real number
$r$ beyond $y_1$.  If $r \le 2p$, then $x\le x_1+2n-2$.  So the photon
remains in $T$ as $r$ increases from 0 to $2p$.  Because $2p\equiv0
\text{ mod } 4$, the photon is moving up and right as $r$ approaches
$2p$.  It is near the right side of $T$.  As $r$ increases from $2p$
to $2p+2$, the number $x$ assumes the value $x_2+2n-2>2n$.  So the
photon passes out of the right side of $T$ into a parallelogram $T'$,
as in the second diagram in Figure~\ref{fig:prlgs}.

Now there are essentially three possibilities.  The photon might
proceed directly to the large spin mirror in the top of $T'$.  This
puts us back in Case 2.  It might proceed directly to the gap between
the spin mirrors in the upper left corner of $T'$.  This puts us back
in Case 1.  Finally, it might proceed directly to the small spin
mirror in the upper left corner of $T'$, as in the second diagram in
Figure~\ref{fig:prlgs}.  It then spins and re-enters $T$.  As it
continues from here, its behavior is opposite to the behavior in Case
1.  Hence it leaves $T$ through the gap between the spin mirrors in
the upper right corner of $T$.  After the photon passes through this
gap, we quickly return to the situation at the beginning of this
paragraph.  This concludes Case 2.
\smallskip

The above analysis of $P$ has the following consequence.  The photon
never leaves a parallelogram $T\in \cT$ by passing through the left
side or a gap in the bottom of $T$.  However, these parallelograms
have the defect that the photon might spin from a parallelogram $T$ to
the parallelogram immediately left of $T$.  This defect can be
repaired by replacing the parallelograms with rectangles, as in
Figure~\ref{fig:rcts}.  The midpoints of the sides of the rectangles
equal the midpoints of the sides of the parallelograms.  For these
rectangles it can be said that once the photon enters a rectangle $R$,
it is never later in the rectangle immediately left of $R$ or a
rectangle immediately below $R$.

  \begin{figure}
\centerline{\includegraphics{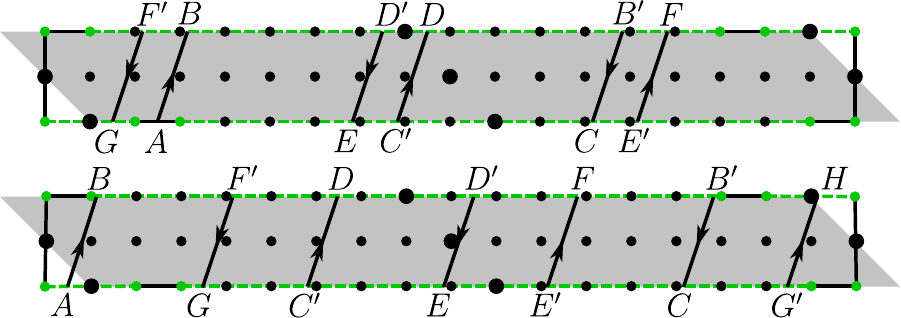}}
\caption{Replacing the shaded parallelograms with rectangles}
\label{fig:rcts}
  \end{figure}

We are finally prepared to evaluate $\zm_{f_n}(s)$.  The photon begins
at $v$.  Suppose that it ends at $w'$.  Then
$\zm_{f_n}(s)=\frac{a}{b}$, where $a$ and $b$ are the integers such
that $w'-v=b(n,0)+a(-1,1)$.  We have that $s=\frac{p'}{q'}$, where
$p'$ and $q'$ are relatively prime nonnegative integers.  The photon
moves in line segments with slope $\frac{p'}{q'}$.  The integer $a$ is
2 times the number of times that the photon passes through the top of
a rectangle.  The number of times that the photon passes through the
top of a rectangle is the number of times that the photon passes
through a gap between spin mirrors.  This is the number of times that
the line segment $S$ passes through a gap between spin mirrors.  The
image of $S$ in $\bbR^2/\zG_1$ is a simple closed curve which $f$ maps
to a simple closed curve with slope $s$.  The gap between two spin
mirrors is spanned by a line segment whose image in $\bbR^2/\zG_1$ is
a core arc with slope 0 which $f$ maps with degree 1 to a core arc
with slope 0.  It follows that $a=2\zi(s,0)=2p'$.  Similarly,
$b=2\zi(s,\infty )=2q'$.  Therefore $\zm_{f_n}(s)=\frac{2p'}{2q'}=s$.

This completes the proof of Theorem~\ref{thm:matings}.

\end{proof}

\begin{remark}\label{remark:matings} Theorem~\ref{thm:matings}
provides examples of NET maps with many formal matings.  These NET
maps have translation term $\zl_1=(n,0)$.  This choice was made to
satisfy the orientation condition for equators.  The other equator
conditions are satisfied by the NET maps with the other translation
terms.  So if we compose one of these other NET maps with itself, we
obtain another Thurston map with many formal matings.  Two simple
computations show that the NET maps with translation terms
$\zl_2=(-1,1)$ and $\zl_1+\zl_2=(n-1,1)$ are hyperbolic.  So Meyer's
Theorem 4.2 in \cite{m} implies that these NET map iterates even have
many topological matings.
\end{remark}

\end{document}